\newcommand{\orcid}[1]{\,\resizebox{8px}{!}{\href{https://orcid.org/#1}{\includegraphics{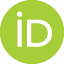}}}}
\def\MR#1{}
\theoremstyle{plain}
\newtheorem{thm}{Theorem}
\newtheorem{pro}[thm]{Proposition}
\newtheorem{conj}[thm]{Conjecture}
\theoremstyle{definition}
\newtheorem{dfn}[thm]{Definition}
\newtheorem{rem}[thm]{Remark}
\newtheorem{exa}[thm]{Example}
\DeclareMathOperator{\rk}{rk}
\DeclareMathOperator{\Hom}{Hom}
\DeclareMathOperator{\Pic}{Pic}
\DeclareMathOperator{\Newt}{Newt}
\DeclareMathOperator{\Aut}{Aut}
\DeclareMathOperator{\GL}{GL}
\newcommand{\sslash}{\mathbin{/\mkern-6mu/}}
\newcommand{\oo}{\mathcal{O}}
\newcommand{\QQ}{\mathbb{Q}}
\newcommand{\PP}{\mathbb{P}}
\newcommand{\ZZ}{\mathbb{Z}}
\newcommand{\FF}{\mathbb{F}}
\newcommand{\CC}{\mathbb{C}}
\newcommand{\LL}{\mathbb{L}}
\newcommand{\Cstar}{\CC^\times}
\newcommand{\cB}{\mathcal{B}}
\newcommand{\cV}{\mathcal{V}}
\newcommand{\cZ}{\mathcal{Z}}
\newcommand{\Ghat}{\widehat{G}}
\newcommand{\QQFano}{$\QQ$\nobreakdash-Fano}
\begin{document}
\author[T. Coates]{Tom Coates\orcid{0000-0003-0779-9735}}
\address{Department of Mathematics\\Imperial College London\\180 Queen's Gate\\London\\UK}
\email{t.coates@imperial.ac.uk}
\author[L.\,Heuberger]{Liana Heuberger\orcid{0000-0002-8038-9620}}
\address{Department of Mathematical Sciences\\University of Bath\\Claverton Down\\Bath\\UK}
\email{lh2457@bath.ac.uk}
\author[A.\,M.\,Kasprzyk]{Alexander M.~Kasprzyk\orcid{0000-0003-2340-5257}}
\address{School of Mathematical Sciences\\University of Nottingham\\Nottingham\\UK}
\email{a.m.kasprzyk@nottingham.ac.uk}
\keywords{Mirror symmetry, Fano variety, birational classification, Laurent inversion.}
\subjclass[2020]{14J33 (Primary); 14J45, 52B20 (Secondary)}
\title{Mirror Symmetry, Laurent Inversion and the Classification of $\QQ$-Fano Threefolds}
\begin{abstract}
We describe recent progress in a program to understand the classification of three-dimensional Fano varieties with $\mathbb{Q}$-factorial terminal singularities using mirror symmetry. As part of this we give an improved and more conceptual understanding of Laurent inversion, a technique that sometimes allows one to construct a Fano variety $X$ directly from a Laurent polynomial $f$ that corresponds to it under mirror symmetry.
\end{abstract}
\maketitle
\section{Introduction}\label{sec:introduction}
\QQFano{} threefolds are three-dimensional Fano varieties with at worst~$\QQ$-factorial terminal singularities. They play an important role in the Minimal Model Program~\cite{BirkarCasciniHaconMcKernan2010,HaconMcKernan2010,Mori1982,Mori1988,Kollar1989}. In this paper we consider the classification of \QQFano{} threefolds up to~$\QQ$\nobreakdash-Gorenstein (qG) deformation. It is known that there are finitely many deformation families~\cite{KollarMiyaokaMoriTakagi2000}, and many such families have been constructed explicitly~\cite{CampanaFlenner1993,Sano1995,Sano1996,IanoFletcher2000,Takagi2002,Kasprzyk2006,BrownKerberReid2012,ProkhorovReid2016,BrownKasprzykQureshi2018,Ducat2018,CoughlanDucat2020}, but the classification is still far from understood. We will describe a new approach to the classification problem, which is motivated by mirror symmetry. This approach has been successful in recovering the (known) classifications of smooth Fano varieties in dimensions two and three~\cite{CoatesCortiGalkinGolyshevKasprzyk2013,CoatesCortiGalkinKasprzyk2016}, and in classifying singular del~Pezzo surfaces~\cite{AkhtarCoatesCortiHeubergerKasprzykOnetoPetracciPrinceTveiten2016,KasprzykNillPrince2017,CortiHeuberger2017}. The key idea is that there should be a one-to-one correspondence between equivalence classes of Fano varieties with mild singularities and equivalence classes of certain Laurent polynomials. The equivalence relation on Fano varieties here is qG\nobreakdash-deformation; the equivalence relation on Laurent polynomials is mutation~\cite{AkhtarCoatesGalkinKasprzyk2012}. 

\begin{dfn}
  A Fano variety is of~\emph{class TG} (for `toric generisation') if it occurs as the general fiber of a qG\nobreakdash-degeneration with reduced fibers and special fiber a normal toric variety.
\end{dfn}

\noindent The vast majority of \QQFano{} varieties are expected to be of class TG; cf.~\cite{PostinghelUrbinati2016}.

\begin{conj} \label{conj:mirror}
    There is a bijective correspondence between qG\nobreakdash-deformation families of \QQFano{} threefolds~$X$ of class TG and mutation-equivalence classes of rigid maximally mutable Laurent polynomials~$f$ in three variables. Under this correspondence the regularized quantum period~$\Ghat_X$ coincides with the classical period~$\pi_f$; see~\S\ref{sec:mirror_correspondence} for more on this.
\end{conj}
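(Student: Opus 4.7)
The plan is to construct maps in both directions and show they are mutually inverse and compatible with the two equivalence relations, using the period identity $\Ghat_X=\pi_f$ both as a consistency check and as a rigidifier that pins down the correspondence on individual representatives. Since the statement is an open conjecture, what follows is a research program rather than a short derivation.

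For the direction from Laurent polynomials to Fano threefolds, given a rigid maximally mutable Laurent polynomial $f$ in three variables, set $P=\Newt(f)$ and let $X_P$ be the toric variety whose fan is the spanning fan of~$P$. The defining conditions on $f$ (in particular the edge-term conditions) should ensure that $X_P$ is a $\QQ$-Gorenstein Fano toric threefold. The key step, which is exactly the problem that Laurent inversion is designed to address, is to produce a qG-smoothing of $X_P$ whose general fibre is a \QQFano{} threefold~$X$ of class TG. One then verifies that this assignment descends to mutation-equivalence classes, using the fact that a mutation alters $P$ in a combinatorial way that, on the geometric side, produces a different toric degeneration of the same $X$.

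For the reverse direction, given a \QQFano{} threefold $X$ of class TG, choose a qG-degeneration of $X$ to a normal toric Fano $X_0$ with moment polytope~$P$, and build a Laurent polynomial $f$ with $\Newt(f)=P$. The combined constraints that $f$ be maximally mutable and that $\pi_f=\Ghat_X$ should determine $f$ up to mutation; establishing existence and uniqueness of such an $f$, and showing that changing the toric degeneration of $X$ only changes $f$ by a mutation, is the central combinatorial input on this side.

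The hardest step, in my view, is matching the two equivalence relations. This requires a connectedness-type statement: any two toric qG-degenerations of a fixed \QQFano{} $X$ should have moment polytopes related by mutation, and conversely any two mutation-equivalent rigid MMLPs should arise from toric degenerations of a common $X$. The corresponding statements for smooth Fano surfaces, smooth Fano threefolds and singular del~Pezzo surfaces have been settled by the works cited in the introduction, and these provide a template. The genuinely new obstacles in the singular threefold case are that rigid terminal singularities may simultaneously obstruct both smoothings (forward direction) and toric degenerations (backward direction), and that the \emph{rigidity} of $f$ on the combinatorial side must be matched with an appropriate deformation-theoretic condition on~$X$; controlling these two phenomena in tandem is where I would expect the main effort, and the main obstacle, to lie.
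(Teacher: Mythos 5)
There is nothing in the paper to check your argument against: the statement is Conjecture~\ref{conj:mirror}, and the paper does not prove it. The authors state explicitly that establishing it ``will require substantial advances in the Gross--Siebert program \dots\ or in deformation theory'', and the rest of the paper uses the conjecture as a heuristic to \emph{construct} examples, not to prove the correspondence. Your text is, as you say yourself, a research program rather than a proof, and as a program it does track the picture the paper has in mind: Laurent inversion for the direction $f \rightsquigarrow X$, qG\nobreakdash-degenerations to $X_P$ for the direction $X \rightsquigarrow f$, and mutation accounting for the ambiguity in the choice of toric degeneration. But every load-bearing step in your outline is precisely one of the open problems, so this cannot be accepted as a proof of the statement.

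Two of your steps are not merely unproven but, as stated, cannot work. First, your forward direction assigns to $f$ the toric variety $X_P$ with $P = \Newt(f)$ and then asks for ``a'' qG\nobreakdash-smoothing; but \S\ref{sec:two_rigid_MMLPs} of the paper exhibits two rigid MMLPs $f_1$, $f_2$ with the same Newton polytope $P$ whose associated \QQFano{} threefolds $X_1$, $X_2$ have different regularized quantum periods, so $X_P$ admits several non-deformation-equivalent qG\nobreakdash-generizations. The map $f \mapsto X$ therefore cannot factor through $P$: the coefficients of $f$ must select a smoothing component (conjecturally via a log structure on $X_P$), and your sketch does not say how. Moreover, in dimension three there are global obstructions to smoothability even when all local obstructions vanish, so existence of \emph{any} smoothing of $X_P$ is itself not guaranteed by the combinatorics of $f$. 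Second, your ``connectedness-type statement'' --- that any two toric qG\nobreakdash-degenerations of a fixed $X$ have polytopes related by mutation --- is exactly the hard core of the conjecture in dimension three; it is known for del~Pezzo surfaces but there is no mechanism in your outline (nor, currently, in the literature) that would produce it for \QQFano{} threefolds. Until those two points are supplied with actual arguments, what you have is a restatement of the conjecture's expected architecture, not progress toward a proof.
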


\noindent
Conjecture~\ref{conj:mirror} is a specialisation of~\cite[Conjecture~5.1]{CoatesKasprzykPittonTveiten2021} to our three-dimensional setting.
As we explain in~\S\ref{sec:mirror_correspondence}, when~$X$ and~$f$ correspond under Conjecture~\ref{conj:mirror} we expect that there is a qG\nobreakdash-degeneration from ~$X$ to the toric variety~$X_f$ defined by the spanning fan of the Newton polytope of~$f$.

Establishing Conjecture~\ref{conj:mirror} will require substantial advances in the Gross--Siebert program~\cite{GrossSiebert2003,GrossSiebert2006,GrossSiebert2010,GrossSiebert2013,ArguzGross2020,GrossHackingSiebert2016}, or in deformation theory (cf.~\cite{CortiFilipPetracci2020,CortiHackingPetracciInPreparation}). But nonethless, whilst the foundations of mirror symmetry are being developed, we can use Conjecture~\ref{conj:mirror} to fill in large parts of the classification of \QQFano{} threefolds that were previously unknown. That is, we can use methods inspired by Conjecture~\ref{conj:mirror}, and by mirror symmetry more broadly, to~\emph{construct} large parts of the classification. As well as giving many new families of \QQFano{} threefolds, these constructions also give supporting evidence for the rich conjectural picture predicted by mirror symmetry. 

Our approach is as follows. We first look for rigid maximally mutable Laurent polynomials (MMLPs)~\cite{CoatesKasprzykPittonTveiten2021}, by specifying a class of lattice polytopes and then searching algorithmically for all rigid MMLPs~$f$ such that the Newton polytope~$\Newt{f}$ lies in this class. Initially here we insist that~$\Newt{f}$ is a three-dimensional lattice polytope with one lattice point in the strict interior; such lattice polytopes are called~\emph{canonical} and have been classified~\cite{Kasprzyk2010,Zenodo-canonical3}. We then expand the search to include certain Laurent polynomials~$f$ such that~$\Newt{f}$ has two lattice points in the strict interior~\cite{BallettiKasprzyk2016}. In this way we obtain a large collection of rigid MMLPs in three variables. We partition this set of Laurent polynomials into mutation-equivalence classes and then, for each class, attempt to construct a deformation family of \QQFano{} threefolds that realises this class via Conjecture~\ref{conj:mirror}.
The method -- Laurent inversion~\cite{CoatesKasprzykPrince2019} -- that we use to construct~$X$ from~$f$ is also inspired by mirror symmetry: see~\S\ref{sec:towers} below for a detailed discussion, and~\S\ref{sec:laurent_inversion} for several examples. 

\subsection{The Mirror Correspondence}\label{sec:mirror_correspondence}
If a \QQFano{} threefold~$X$ corresponds, via Conjecture~\ref{conj:mirror}, to a Laurent polynomial~$f$ then the regularized quantum period of~$X$ matches the classical period of~$f$~\cite{CoatesCortiGalkinGolyshevKasprzyk2013}. Here the regularized quantum period~$\Ghat_X$ of~$X$ is a generating function
\[
    \Ghat_X(t) = 1 + \sum_{d=2}^\infty c_d t^d
\]
where~$c_d = r_d d!$ and~$r_d$ is a certain genus-zero Gromov--Witten invariant of~$X$, and the classical period~$\pi_f$ of~$f \in \CC[x_1^{\pm 1},\ldots,x_n^{\pm 1}]$ is
\[
    \pi_f(t) = \frac{1}{(2 \pi i)^n}
    \int_{(S^1)^n} \frac{1}{1 - t f} \frac{dx_1}{x_1} \cdots \frac{dx_n}{x_n}
\]
which expands as a power series
\[
    \pi_f(t) = \sum_{d=0}^\infty \kappa_d t^d
\]
with~$\kappa_d$ the coefficient of the constant monomial in~$f^d$. Gromov--Witten invariants are deformation invariant, so the regularized quantum period~$\Ghat_X$ is a qG\nobreakdash-deformation invariant of~$X$. On the other side of the correspondence, the classical period~$\pi_f$ is invariant under mutation of~$f$.

If the Fano variety~$X$ corresponds to the Laurent polynomial~$f$ via Conjecture~\ref{conj:mirror} then it is expected that there is a qG\nobreakdash-degeneration with general fiber~$X$ and special fiber~$X_f$, where~$X_f$ is the toric variety defined by the spanning fan of the Newton polytope~$\Newt{f}$. Thus one can hope to recover~$X$ from the Laurent polynomial~$f$ by smoothing the toric variety~$X_f$, which is in general highly singular. The coefficients of~$f$ should therefore somehow encode a logarithmic structure~\cite{Kato1989} on the central fiber~$X_f$ of this degeneration. From this perspective one can think of Laurent inversion -- the technique that we use to construct \QQFano{} threefolds -- as attempting to construct the expected smoothing of~$X_f$ as an embedded deformation of~$X_f$ inside an ambient toric variety built from~$f$. For more on this, see the work of Doran--Harder~\cite{DoranHarder2016} and also~\cite[\S8]{CoatesKasprzykPrince2019}. If the conjectural picture described above, with the \QQFano{} variety~$X$ degenerating to a toric variety~$X_f$, is correct, then one can think of~$X$ as corresponding to a general point on an appropriate component of the Hilbert scheme, and~$X_f$ as giving a point on the boundary of that component. From this point of view, the discussion in~\S\ref{sec:two_rigid_MMLPs} is particularly instructive. We exhibit two rigid MMLPs~$f_1$ and~$f_2$ with the same Newton polytope~$P$ -- so that~$X_{f_1} = X_{f_2} = X_P$ -- but different classical periods~$\pi_{f_1}$,~$\pi_{f_2}$. We then use Laurent inversion to construct \QQFano{} threefolds~$X_1$ and~$X_2$ which correspond respectively to~$f_1$ and~$f_2$ under Conjecture~\ref{conj:mirror}. Since the classical periods~$\pi_{f_1}$ and~$\pi_{f_2}$ are different, we have that~$\Ghat_{X_1} \ne \Ghat_{X_2}$; thus~$X_1$ and~$X_2$ are not isomorphic, or even qG\nobreakdash-deformation equivalent. This means that~$X_1$ and~$X_2$ lie on different components of the Hilbert scheme, and the singular toric variety~$X_P$ lies in the intersection of these components. The choice of rigid MMLP with Newton polytope~$P$ -- that is, the choice of~$f_1$ or~$f_2$ -- corresponds to choosing a component of the Hilbert scheme that contains~$X_P$ and gives a smoothing of~$X_P$.

\subsection{The Graded Ring Database}
Miles Reid and his collaborators have pioneered the study of \QQFano{} threefolds using graded ring methods~\cite{AltinokBrownReid2002,IanoFletcher2000,BrownKasprzyk2022,Altinok98,BrownSuzuki2007a,BrownSuzuki2007b}. They have determined a set of~$39\,550$ rational functions that contains all Hilbert series of \QQFano{} threefolds that satisfy a semistability condition\footnote{See~\cite{BrownKasprzyk2022} for a precise discussion.} and have Picard rank~1. In practice all known \QQFano{} threefolds have Hilbert series contained in this dataset, regardless of semistability or Picard rank, so we will ignore these conditions in what follows. The Hilbert series of a Fano variety~$X$ is the generating series for the dimensions of the graded pieces of the anticanonical ring
\begin{equation}\label{eq:anticanonical}
R(X, {-K}_X) = \bigoplus_{n = 0}^\infty H^0(X, {-n} K_X).
\end{equation}
Note that the Hilbert series is invariant under qG\nobreakdash-deformation of~$X$. Choosing a minimal set of homogeneous generators for the anticanonical ring~\eqref{eq:anticanonical} determines an embedding of~$X$ into weighted projective space~$w\PP$, and one can use the Hilbert series to estimate the weights and codimension of such an embedding: see~\cite[\S3]{BrownKasprzyk2022}. The Hilbert series also determines the~\emph{genus}~$g(X) \coloneqq h^0(X,{-K}_X) - 2$. When~$X$ is smooth,~$g(X)$ is the genus of the curve given by intersecting~$X$ with two generic hyperplanes in~$w\PP$.

The dataset of possible Hilbert series is recorded in the Graded Ring Database~\cite{GRDB-fano3,Zenodo-fano3}. One can think of this data as giving a numerical sketch of the possible `geography' of \QQFano{} threefolds. A point to note is that the combinatorial methods used to produce the dataset of possible Hilbert series do not guarantee the existence (or uniqueness) of a deformation family of \QQFano{} threefolds with that Hilbert series: there can be zero, one, or many such families. From this point of view, our work gives a new way to approach the realisation problem for a given possible Hilbert series~$H(t)$. If there is a \QQFano{} threefold~$X$ with Hilbert series~$H(t)$, and~$X$ corresponds under Conjecture~\ref{conj:mirror} to a rigid MMLP~$f$, then as discussed we expect that there is a qG\nobreakdash-degeneration from~$X$ to the toric variety~$X_f$. This toric variety is defined by the spanning fan of the polytope~$P = \Newt{f}$, and the Hilbert series of~$X_f$ is equal to the Ehrhart series of the dual polytope~$P^*$. One can therefore approach the realisation problem as follows.

Given a possible Hilbert series~$H(t)$ one can search for Fano polytopes~$P$ such that the Ehrhart series of~$P^*$ is equal to~$H(t)$. For each such~$P$ one can search for rigid MMLPs~$f$ with Newton polytope~$P$. Partitioning these Laurent polynomials into mutation-equivalence classes predicts the number of deformation families, as well as specific qG toric degenerations~$X_f$ of these families. One can then use Laurent inversion (as in~\S\ref{sec:laurent_inversion}), or search for toric complete intersection models (as in~\S\ref{sec:random_ci}), or use more traditional methods such as unprojection~\cite{BrownKerberReid2012,PapadakisReid2004} to construct each family. 

\subsection{The landscape of \QQFano{} threefolds}
\begin{figure}[tbp]
    \centering
    \begin{subfigure}{.48\textwidth}
      \includegraphics[width=\linewidth]{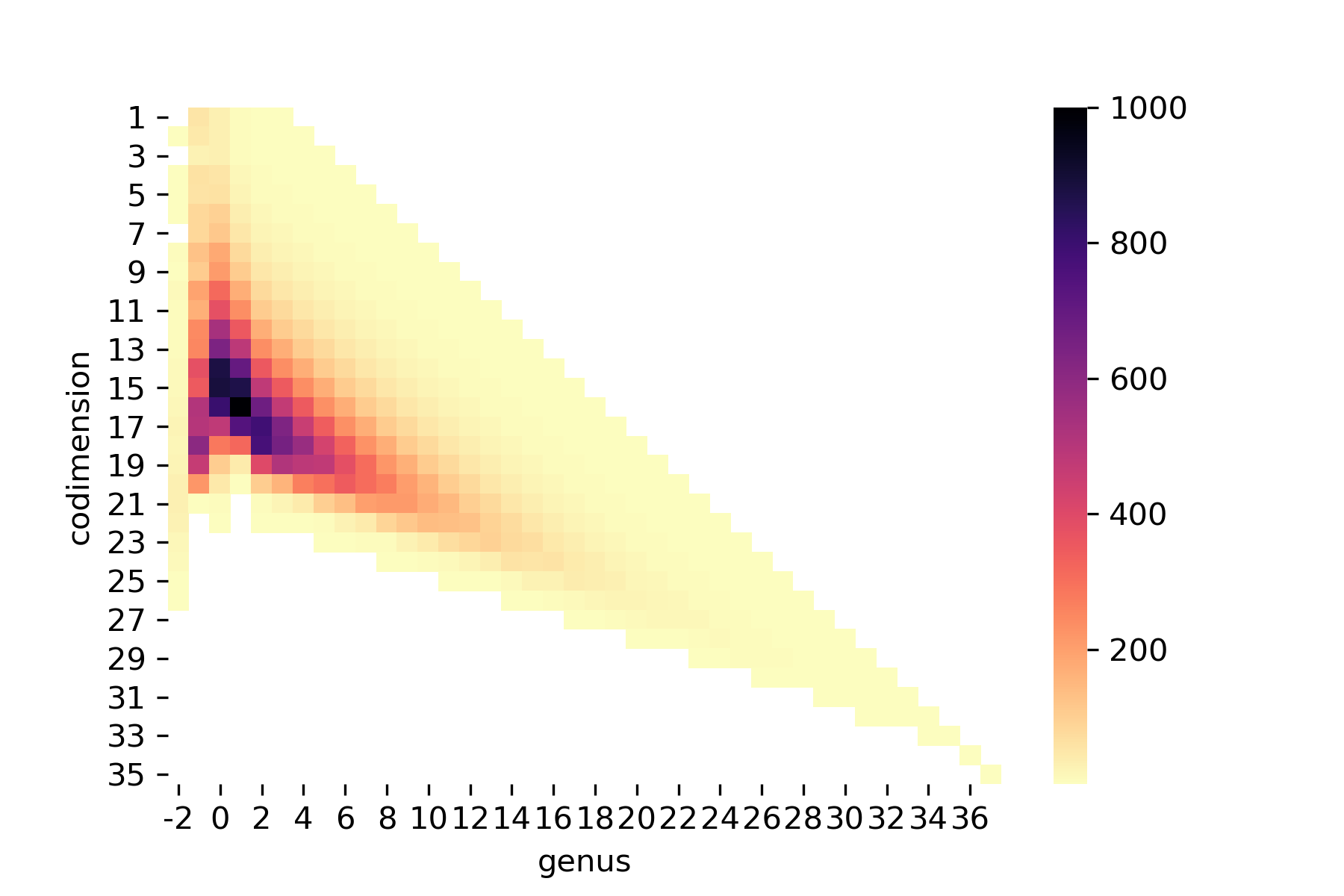}
      \caption{Potential~$\QQ$-Fano threefolds}
      \label{fig:grdb_heatmaps_a}
    \end{subfigure} \\
    \begin{subfigure}{.48\textwidth}
      \includegraphics[width=\linewidth]{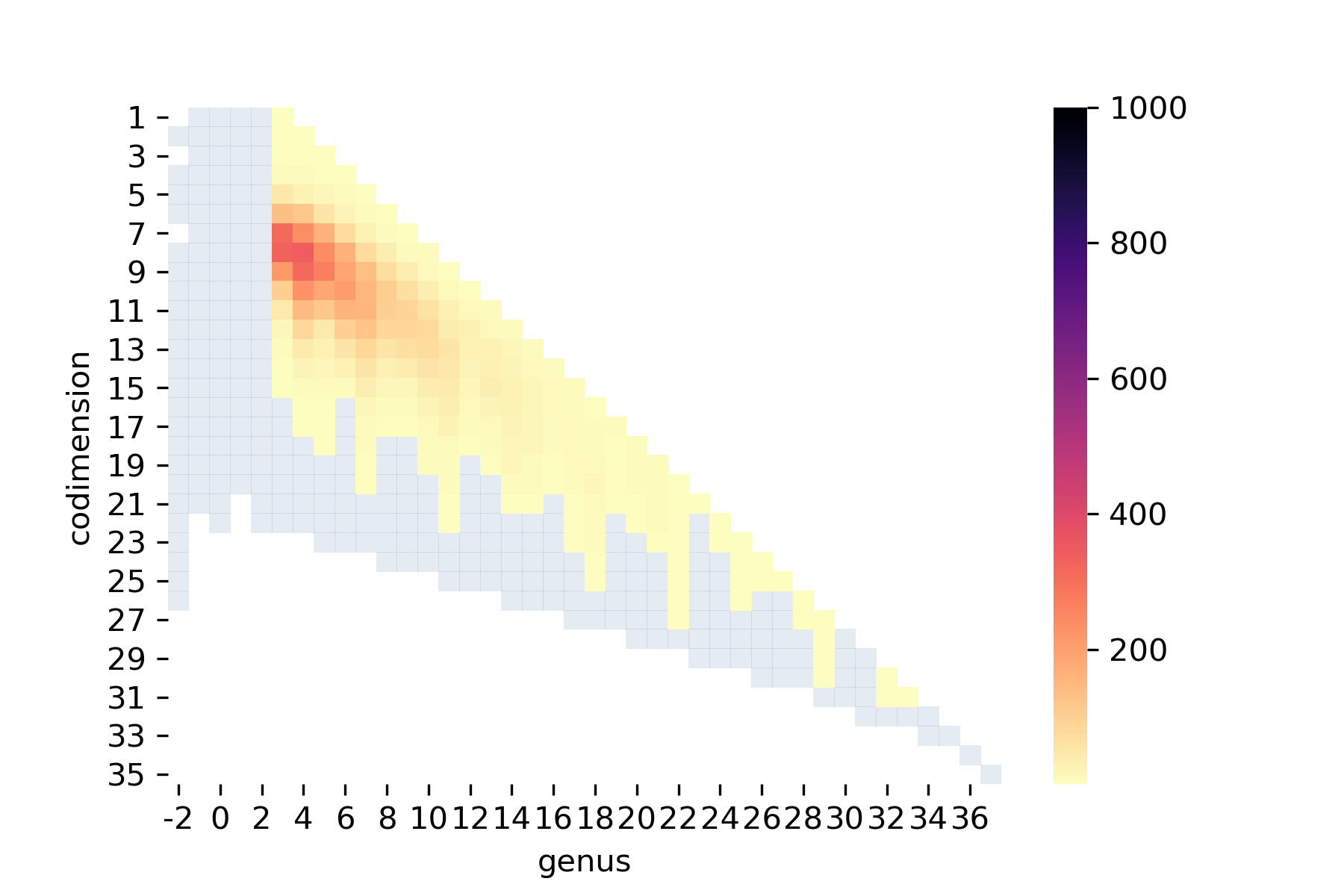}
      \caption{Rigid MMLPs on canonical polytopes}
      \label{fig:grdb_heatmaps_b}
    \end{subfigure} 
    \hfill
    \begin{subfigure}{.48\textwidth}
      \includegraphics[width=\linewidth]{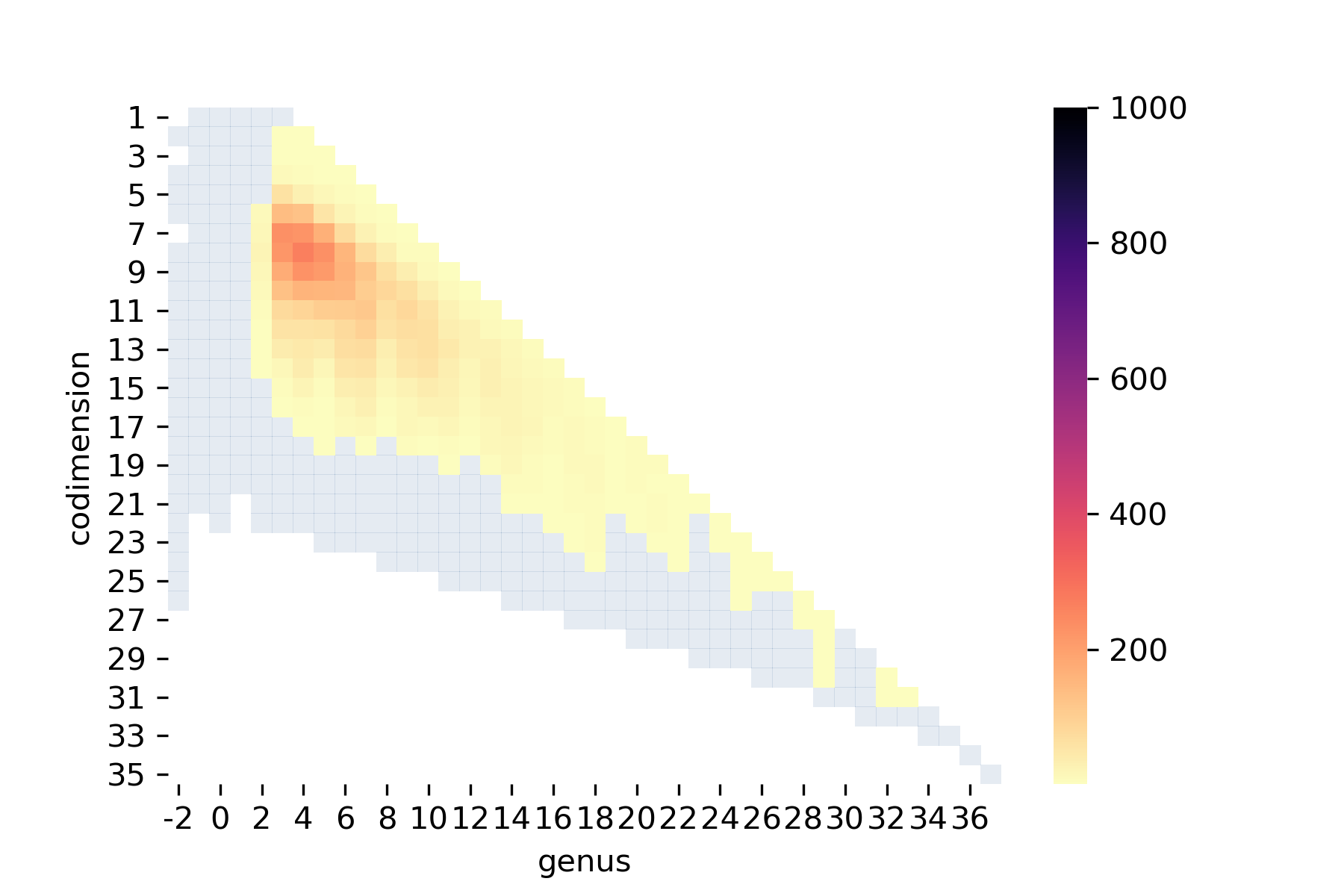}
      \caption{Rigid MMLPs on a sample of~$2$-point polytopes} 
      \label{fig:grdb_heatmaps_c}
    \end{subfigure} \\
    \begin{subfigure}{.48\textwidth}
      \includegraphics[width=\linewidth]{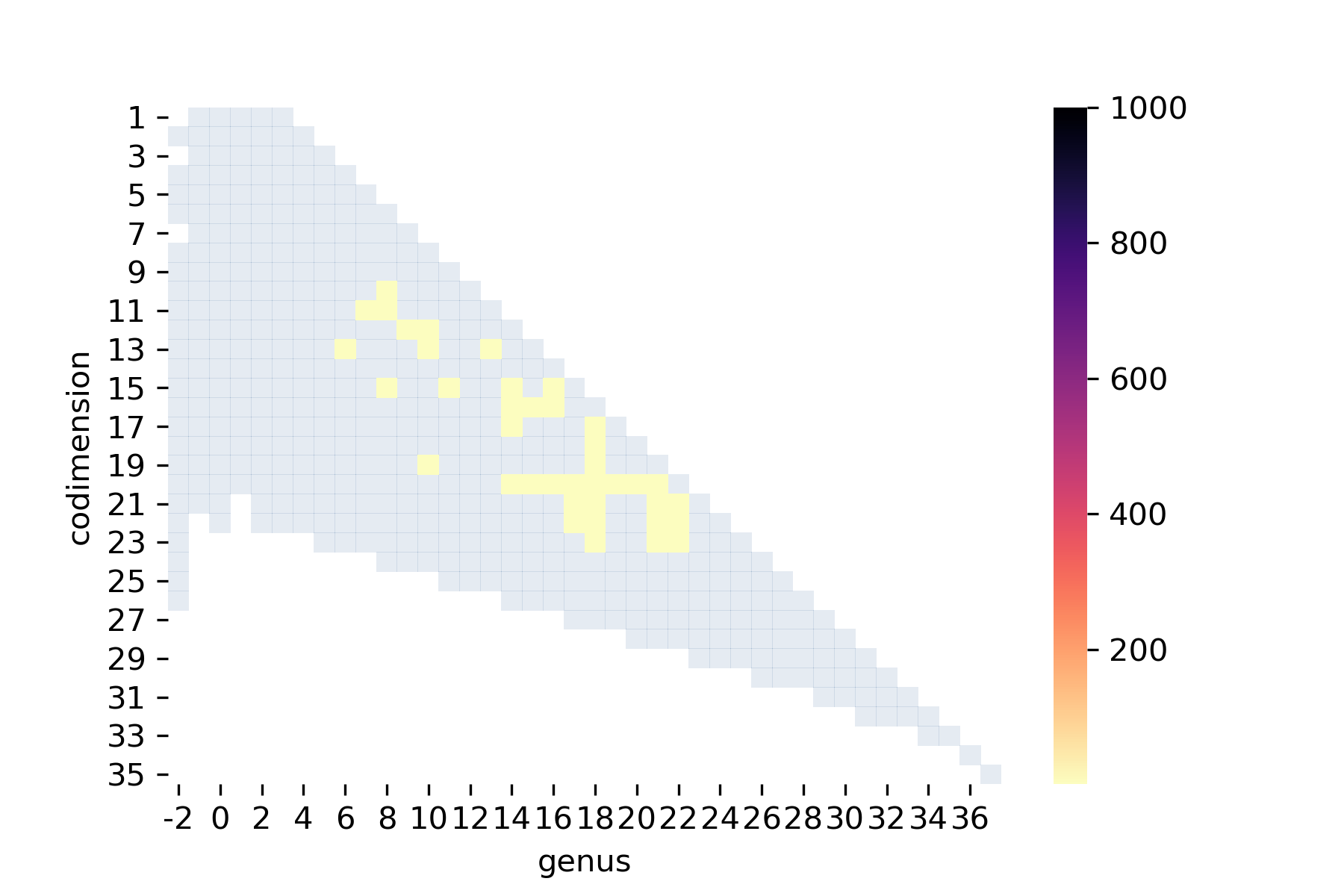}
      \caption{Examples constructed using Laurent inversion}
      \label{fig:grdb_heatmaps_d}
    \end{subfigure} 
    \hfill
    \begin{subfigure}{.48\textwidth}
      \includegraphics[width=\linewidth]{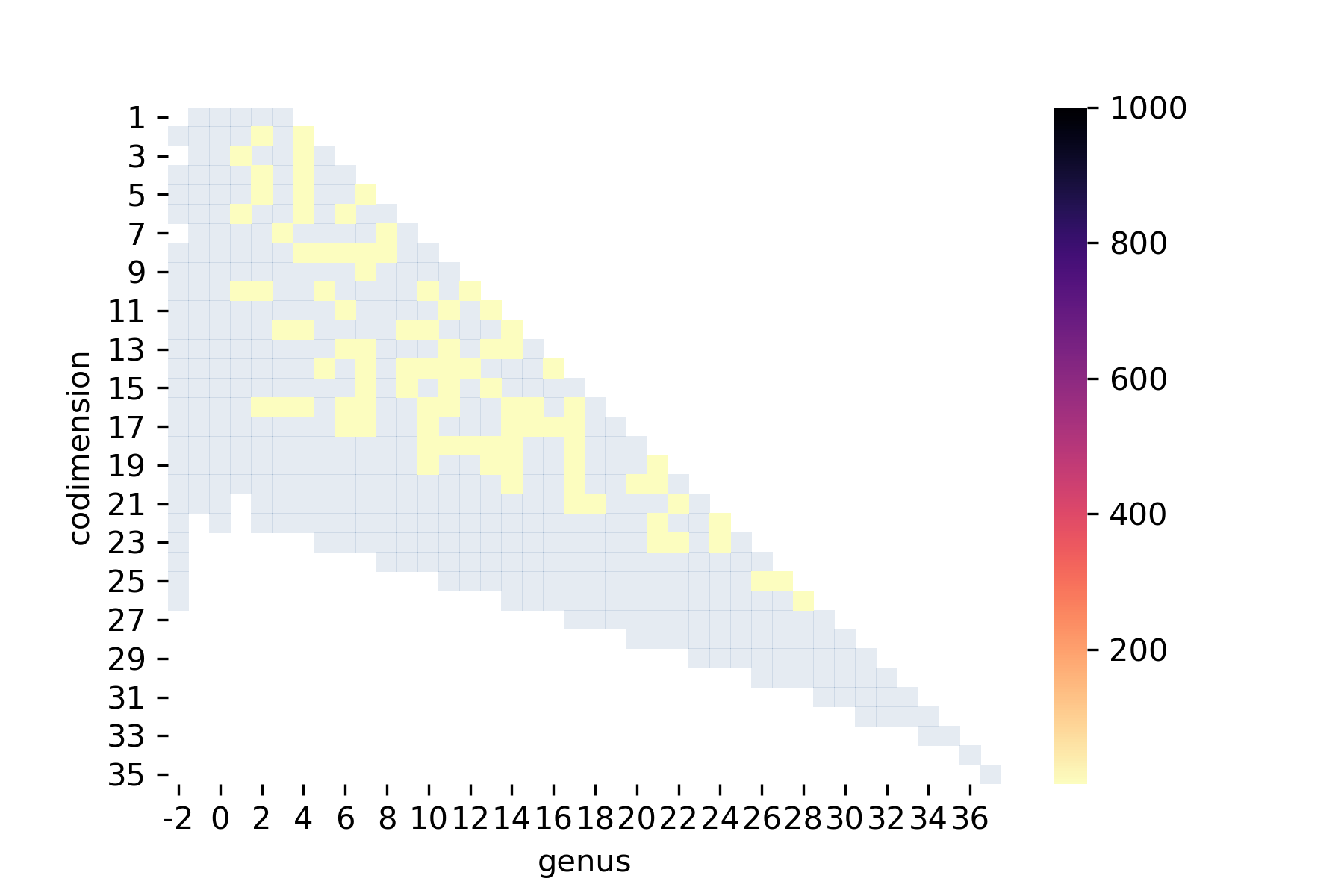}
      \caption{Randomly generated toric hypersurfaces}
      \label{fig:grdb_heatmaps_e}
    \end{subfigure} 
    \caption{ The distribution of potential Hilbert series for {\protect \QQFano} threefolds with Fano index~1: \subref{fig:grdb_heatmaps_a} from the Graded Ring Database; \subref{fig:grdb_heatmaps_b} from mutation-equivalence classes of rigid MMLPs with three-dimensional canonical Newton polytope; \subref{fig:grdb_heatmaps_c} from mutation-equivalence classes of rigid MMLPs with Newton polytope in a random sample of three-dimensional polytopes with two interior points; \subref{fig:grdb_heatmaps_d} from toric complete intersections constructed using Laurent inversion; \subref{fig:grdb_heatmaps_e} from randomly-generated quasismooth hypersurfaces in toric orbifolds of Picard rank~2. Hilbert series are recorded as pairs~$(c, g)$ where~$c$ is the estimated codimension and~$g = g(X)$ is the genus. Plots \subref{fig:grdb_heatmaps_b}--\subref{fig:grdb_heatmaps_e} have the shadow of plot \subref{fig:grdb_heatmaps_a} as background.}
    \label{fig:grdb_heatmaps}
\end{figure}

Figure~\ref{fig:grdb_heatmaps} gives three different views of the distribution of \QQFano{} threefolds:
\begin{itemize}
  \item Figure~\ref{fig:grdb_heatmaps_a} shows the landscape of possible Hilbert series for \QQFano{} threefolds as determined by the Graded Ring Database. Every Hilbert series of a \QQFano{} threefold is recorded here, but this analysis is purely numerical and ignores the realisation problem: each Hilbert series here may be represented by zero, one, or many \QQFano{} threefolds. 
  \item Figure~\ref{fig:grdb_heatmaps_b} and Figure~\ref{fig:grdb_heatmaps_c} give two views of the landscape assuming the conjectural correspondence between \QQFano{} threefolds and mutation-equivalence classes of rigid MMLPs. These are experimental and (necessarily) incomplete. Figure~\ref{fig:grdb_heatmaps_b} records the distribution from what we believe to be an almost-complete collection of rigid MMLPs~$f$ in three variables such that~$\Newt{f}$ is a canonical polytope~\cite{Zenodo-certain-MMLP}. Figure~\ref{fig:grdb_heatmaps_c} records the distribution from a random sample of rigid MMLPs in three variables such that~$\Newt{f}$ has two interior points.
  \item Figure~\ref{fig:grdb_heatmaps_d} and Figure~\ref{fig:grdb_heatmaps_e} give two views of the landscape based on genuine \QQFano{} threefolds. Figure~\ref{fig:grdb_heatmaps_d} records the distribution from toric complete intersections constructed from rigid MMLPs using Laurent inversion: see~\S\ref{sec:laurent_inversion} and~\cite{Heuberger2022}. Figure~\ref{fig:grdb_heatmaps_e} records the distribution from a random sample of quasismooth hypersurfaces in toric orbifolds of Picard rank~$2$: see~\S\ref{sec:random_ci}.
\end{itemize}
Comparing Figure~\ref{fig:grdb_heatmaps_b} and Figure~\ref{fig:grdb_heatmaps_c} with Figure~\ref{fig:grdb_heatmaps_a} predicts that there are many \QQFano{} threefolds with the same Hilbert series, particularly in fairly low codimension and low genus. Comparing Figure~\ref{fig:grdb_heatmaps_c} and Figure~\ref{fig:grdb_heatmaps_e} with Figure~\ref{fig:grdb_heatmaps_b} indicates how restricting the Newton polytope of our rigid MMLPs~$f$ to be canonical prevents us from realising parts of the possible \QQFano{} landscape: for example it forces~$g(X) \geq 3$.
\section{Maximally Mutable Laurent Polynomials}
In this section we define mutations and mutability, and give a criterion (Proposition~\ref{pro:rigid}) for a Laurent polynomial to be a rigid MMLP. We then describe our systematic search for rigid MMLPs~$f$ in three variables such that the Newton polytope of~$f$ is canonical. 

\subsection{Mutations}
Let~$N$ be a lattice,~$M = \Hom(N, \ZZ)$ be the dual lattice, and consider Laurent polynomials~$f \in \CC[N]$. A~\emph{mutation} is an automorphism
\begin{align*}
    \mu_{w,h} \colon \CC(N) & \to \CC(N) \\
    x^\gamma & \mapsto h^{\left \langle w, \gamma \right \rangle} x^{\gamma}
\end{align*}
defined by a primitive lattice vector~$w \in M$ called the~\emph{weight} and a Laurent polynomial~$h \in \CC[w^\perp \cap N]$ called the~\emph{factor}~\cite{AkhtarCoatesGalkinKasprzyk2012}. Here we can think of~$w$ as defining a~$\ZZ$-grading on~$\CC[N]$, with~$h$ lying in the degree-zero piece of that grading. In general, given a Laurent polynomial~$f \in \CC[N]$, the mutation~$g \coloneqq \mu_{w,h}(f)$ will be a rational function. If~$g \in \CC[N]$ is also a Laurent polynomial then we say that~$f$ is~\emph{mutable} with respect to~$(w,h)$.

\begin{exa}
    Mutability of a Laurent polynomial~$f$ imposes constraints on its coefficients. For example, consider the Laurent polynomial
    \[
        f_a = y + \frac{1}{xy} + \frac{a}{y} + \frac{x}{y} 
    \]
    in variables~$x$ and~$y$, where~$a$ is a parameter, and the mutation~$\mu_{w,h} \colon \CC(N) \to \CC(N)$ where~$N = \ZZ^2$,~$w = (0,1)$ and~$h = 1+x$. Then~$\mu_{w,h}$ sends~$x \mapsto x$,~$y \mapsto (1+x) y$, and~$f_a$ is mutable with respect to~$(w,h)$ if and only if~$a = 2$. To see this, write
    \[
        f_a = y + \frac{1 + a x + x^2}{xy}    
    \]
    Then~$f_a$ is mutable with respect to~$(w,h)$ if and only if~$1 + ax + x^2$ is divisible by~$1+x$, that is, if and only if~$a = 2$.
\end{exa}

\begin{exa}
    If the factor~$h$ is a monomial, then the mutation~$\mu_{w,h}$ is a monomial change of variables and every Laurent polynomial~$f \in \CC[N]$ is mutable with respect to~$(w,h)$, for any weight~$w$. We regard such mutations as trivial: see~\cite[Definition~2.1]{CoatesKasprzykPittonTveiten2021}.
\end{exa}

\begin{exa} \label{ex:divisibility}
    Suppose that~$\mu_{w,h}$ is a non-trivial mutation -- i.e.~$h$ is not a monomial -- and that~$f \in\CC[N]$ is mutable with respect to~$(w,h)$. We may choose an identification of~$N$ with~$\ZZ^n$ such that the weight~$w = (0,\ldots,0,1)$ is the~$n$th standard basis vector for the dual lattice~$M = (\ZZ^n)^\vee$. Write~$\CC[N] = \CC[x_1^{\pm 1},\ldots,x_{n-1}^{\pm 1},y^{\pm 1}]$, so that~$h$ is a Laurent polynomial in the variables~$x_1,\ldots,x_{n-1}$ and 
    \begin{equation} \label{eq:slice_f}
        f = \sum_{k=-a}^{b} f_k y^k
    \end{equation}
    for some positive integers~$a$,~$b$ and some Laurent polynomials~$f_k$ in the variables~$x_1,\ldots,x_{n-1}$. The mutation~$\mu_{w,h} f$ is
    \[
        \mu_{w,h} f = \sum_{k=-a}^{b} h^k f_k y^k
    \]
    and therefore~$f$ is mutable if and only if~$h^k$ divides~$f_{-k}$ for all~$k > 0$. Since for any Laurent polynomials~$g_1$,~$g_2$ we have
    \[
        \Newt\big(g_1 g_2\big) = \Newt(g_1) + \Newt(g_2)
    \]
    where the operation on the right-hand side is Minkowski sum of polytopes
    \[
        P_1 + P_2 = \Big\{ p_1 + p_2 : p_1 \in P_1, p_2 \in P_2 \Big \}
    \]
    it follows that~$f$ is mutable with respect to~$(w,h)$ only if the~$k$\nobreakdash-fold dilate of~$\Newt(h)$ is a Minkowski summand of~$\Newt(f_{-k})$ for~$1 \leq k \leq a$.
\end{exa}

\begin{dfn}
    Consider a Laurent polynomial~$f \in \CC[N]$ with Newton polytope~$P$:
    \[
        f = \sum_{p \in N \cap P} a_p x^p
    \]
    We will say that~$f$ is~\emph{normalised} if~$a_v = 1$ whenever~$v$ is a vertex of~$P$, and that~$f$ is~\emph{centered} if the origin lies in the strict interior of~$P$ and~$a_0 = 0$. 
\end{dfn}

\begin{exa}
    Consider the polytope~$P$ with ID~\href{http://grdb.co.uk/search/toricf3c?ID_cmp=in&ID=1523}{1523} in the GRDB database of three-dimensional canonical polytopes, as pictured in Figure~\ref{fig:P1523}.
    \begin{figure}[htb]
        \centering
        \begin{tikzpicture}[every node/.style={scale=0.8},scale=0.45]
        
        \draw[gray!80, thick] (2,8) -- (2,4);
        \draw[gray!80, thick] (4,1) -- (2,4);
        \draw[gray!80, thick] (0,0) -- (2,4);
        \draw[gray!80, thick] (0,0) -- (-4,-4);
        \draw[gray!80, thick] (4,1) -- (8,-1);
        \draw[gray!80, thick] (0,0) -- (4,1);

        \draw[black, very thick] (2,8) -- (0,6);
        \draw[black, very thick] (2,8) -- (4,7);
        \draw[black, very thick] (8,1) -- (4,7);
        \draw[black, very thick] (0,6) -- (-4,-2);
        \draw[black, very thick] (0,6) -- (4,7);
        \draw[black, very thick] (-4,-4) -- (-4,-2);
        \draw[black, very thick] (-4,-4) -- (-2,-5);
        \draw[black, very thick] (-4,-2) -- (-2,-5);
        \draw[black, very thick] (-2,-5) -- (6,-3);
        \draw[black, very thick] (6,-3) -- (8,-1);
        \draw[black, very thick] (8,1) -- (8,-1);
        \draw[black, very thick] (6,-3) -- (8,1);
        
        \filldraw[black] (-4,-4) circle (1.5pt) node[anchor=north east] {$(1,-2,5)$};
        \filldraw[black] (-4,-2) circle (1.5pt) node[anchor=north east] {$(1,-2,4)$};
        \filldraw[black] (-2,-5) circle (1.5pt) node[anchor=north east] {$(0,-2,3)$};
        
        \filldraw[black] (6,-3) circle (1.5pt) node[anchor=north west] {$(-2,0,-3)$};
        \filldraw[black] (8,-1) circle (1.5pt) node[anchor=north west] {$(-2,1,-4)$};
        \filldraw[black] (8,1) circle (1.5pt) node[anchor=north west] {$(-2,1,-5)$};
        
        \filldraw[black] (4,1) circle (1.5pt) node[anchor=south west] {$(0,1,0)$};
        \filldraw[black] (2,4) circle (1.5pt) node[anchor=east] {$(1,1,1)$};
        \filldraw[black] (0,0) circle (1.5pt) node[anchor=east] {$(1,0,3)$};
        
        \filldraw[black] (4,7) circle (1.5pt) node[anchor=west] {$(0,1,-3)$};
        \filldraw[black] (2,8) circle (1.5pt) node[anchor=south] {$(1,1-1)$};
        \filldraw[black] (0,6) circle (1.5pt) node[anchor=east] {$(1,0,0)$};
        
        \draw[gray, thick] (2.1,1.1) -- (2-0.1,1-0.1);
        \draw[gray, thick] (2+0.1,1-0.1) -- (2-0.1,1+0.1);
        \end{tikzpicture}
        \caption{The three-dimensional canonical polytope with GRDB ID~\href{http://grdb.co.uk/search/toricf3c?ID_cmp=in&ID=1523}{1523}.}
        \label{fig:P1523}
    \end{figure}
   $P$ has four triangular facets and four hexagonal facets. The automorphism group~$\Aut(P)$ is isomorphic to the symmetric group~$S_4$, and acts permuting the hexagonal facets. Let~$N$ be the lattice containing~$P$, let~$M = \Hom(N,\ZZ)$, and let~$w \in M$ be a supporting hyperplane for a hexagonal facet~$F$. The facet~$F$ is at height~$-1$ with respect to~$w$, so that if~$f$ is a Laurent polynomial with Newton polytope~$P$ then the expansion~\eqref{eq:slice_f} has~$a = 1$ (and~$b=2$).
    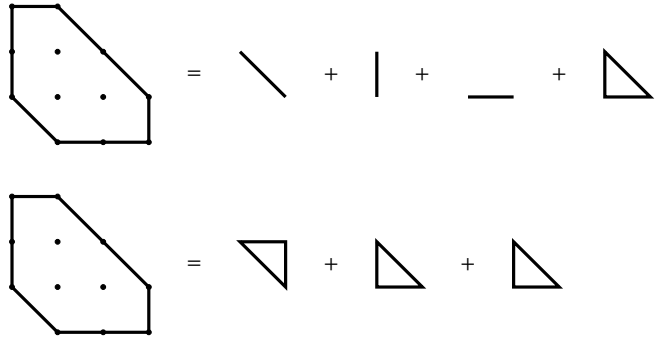
\begin{figure}[tb]
        \centering
        \begin{tikzpicture}[every node/.style={scale=0.8},scale=0.6]
        
        \draw[black, very thick] (1,0) -- (3,0);
        \draw[black, very thick] (3,0) -- (3,1);
        \draw[black, very thick] (3,1) -- (1,3);
        \draw[black, very thick] (1,3) -- (0,3);
        \draw[black, very thick] (0,3) -- (0,1);
        \draw[black, very thick] (1,0) -- (0,1);
        
        \draw[black, very thick] (5,2) -- (6,1);
        \draw[black, very thick] (8,1) -- (8,2);
        \draw[black, very thick] (10,1) -- (11,1);
        \draw[black, very thick] (13,1) -- (13,2) -- (14,1) -- cycle;
        
        \filldraw[black] (4,1.5) node{=};
        \filldraw[black] (7,1.5) node{+};
        \filldraw[black] (9,1.5) node{+};
        \filldraw[black] (12,1.5) node{+};

        \filldraw[black] (1,0) circle (1.5pt) node[anchor=north east]{};
        \filldraw[black] (3,0) circle (1.5pt) node[anchor=north east]{};
        \filldraw[black] (3,1) circle (1.5pt) node[anchor=north east]{};
        \filldraw[black] (1,3) circle (1.5pt) node[anchor=north east]{};
        \filldraw[black] (0,3) circle (1.5pt) node[anchor=north east]{};
        \filldraw[black] (0,1) circle (1.5pt) node[anchor=north east]{};
        
        \filldraw[black] (1,1) circle (1.5pt) node[anchor=north east]{};
        \filldraw[black] (1,2) circle (1.5pt) node[anchor=north east]{};
        \filldraw[black] (2,1) circle (1.5pt) node[anchor=north east]{};
        
        \filldraw[black] (2,2) circle (1.5pt) node[anchor=north east]{};
        \filldraw[black] (0,2) circle (1.5pt) node[anchor=north east]{};
        \filldraw[black] (2,0) circle (1.5pt) node[anchor=north east]{};
        
        \begin{scope}[shift={(0,-4.2)}]
            \draw[black, very thick] (1,0) -- (3,0);
            \draw[black, very thick] (3,0) -- (3,1);
            \draw[black, very thick] (3,1) -- (1,3);
            \draw[black, very thick] (1,3) -- (0,3);
            \draw[black, very thick] (0,3) -- (0,1);
            \draw[black, very thick] (1,0) -- (0,1);
            
            \draw[black, very thick] (5,2) -- (6,2)-- (6,1) -- cycle;
            \draw[black, very thick] (8,1) -- (9,1)-- (8,2) -- cycle;
            \draw[black, very thick] (11,1) -- (11,2)-- (12,1) -- cycle;

            \filldraw[black] (4,1.5) node{=};
            \filldraw[black] (7,1.5) node{+};
            \filldraw[black] (10,1.5) node{+};

            \filldraw[black] (1,0) circle (1.5pt) node[anchor=north east]{};
            \filldraw[black] (3,0) circle (1.5pt) node[anchor=north east]{};
            \filldraw[black] (3,1) circle (1.5pt) node[anchor=north east]{};
            \filldraw[black] (1,3) circle (1.5pt) node[anchor=north east]{};
            \filldraw[black] (0,3) circle (1.5pt) node[anchor=north east]{};
            \filldraw[black] (0,1) circle (1.5pt) node[anchor=north east]{};
            
            \filldraw[black] (1,1) circle (1.5pt) node[anchor=north east]{};
            \filldraw[black] (1,2) circle (1.5pt) node[anchor=north east]{};
            \filldraw[black] (2,1) circle (1.5pt) node[anchor=north east]{};
            
            \filldraw[black] (2,2) circle (1.5pt) node[anchor=north east]{};
            \filldraw[black] (0,2) circle (1.5pt) node[anchor=north east]{};
            \filldraw[black] (2,0) circle (1.5pt) node[anchor=north east]{};
        \end{scope}    
        \end{tikzpicture}

        \caption{Two different Minkowski decompositions of the facet~$F$.}
        \label{fig:F_decompositions}
    \end{figure}
There are two distinct Minkowski factorizations of~$F$: see Figure~\ref{fig:F_decompositions}. Corresponding to these two Minkowski factorizations, we consider two possible factors~$h_1$ and~$h_2$ for mutations with weight~$w$. In co-ordinates (as in Figure~\ref{fig:coefficients_h1}) where~$F$ is the convex hull of
    \[
        \text{$(1,0)$, $(3,0)$, $(3,1)$, $(1,3)$, $(0,3)$, and $(0,1)$}
    \]
    we have
    \begin{align*}
        h_1 &= (x+y)(1+y)(1+x)(1+x+y) \\
        h_2 &= (x+y+xy)(1+x+y)^2 
    \end{align*}
    Consider a normalised and centered Laurent polynomial~$f$ with~$\Newt(f) = P$, 
    \[
        f = \sum_{p \in N \cap P} a_p x^p    
    \]
    Insisting that~$f$ is mutable with respect to~$(w,h_1)$ imposes the divisibility condition discussed in Example~\ref{ex:divisibility}. This fixes the coefficients~$a_p$ of lattice points such that~$p \in F$ as in Figure~\ref{fig:coefficients_h1} and imposes no condition on other coefficients~$a_p$. Similarly, insisting that~$f$ is mutable with respect to~$(w,h_2)$ fixes the coefficients~$a_p$ of lattice points such that~$p \in F$ as in Figure~\ref{fig:coefficients_h2} and imposes no condition on other coefficients~$a_p$.
    
     \begin{figure}[ht]
        \centering
        \begin{tikzpicture}[every node/.style={scale=0.8},scale=0.8]
        
        \draw[black, very thick] (1,0) -- (3,0);
        \draw[black, very thick] (3,0) -- (3,1);
        \draw[black, very thick] (3,1) -- (1,3);
        \draw[black, very thick] (1,3) -- (0,3);
        \draw[black, very thick] (0,3) -- (0,1);
        \draw[black, very thick] (1,0) -- (0,1);
        
        \filldraw[black] (1,0) circle (1.5pt) node[anchor=north east]{$1$};
        \filldraw[black] (3,0) circle (1.5pt) node[anchor=north west]{$1$};
        \filldraw[black] (3,1) circle (1.5pt) node[anchor=south west]{$1$};
        \filldraw[black] (1,3) circle (1.5pt) node[anchor=south west]{$1$};
        \filldraw[black] (0,3) circle (1.5pt) node[anchor=south east]{$1$};
        \filldraw[black] (0,1) circle (1.5pt) node[anchor=north east]{$1$};
        
        \filldraw[black] (1,1) circle (1.5pt) node[anchor=east]{$4$};
        \filldraw[black] (1,2) circle (1.5pt) node[anchor=east]{$4$};
        \filldraw[black] (2,1) circle (1.5pt) node[anchor=east]{$4$};
        
        \filldraw[black] (2,2) circle (1.5pt) node[anchor=south west]{$2$};
        \filldraw[black] (0,2) circle (1.5pt) node[anchor=east]{$2$};
        \filldraw[black] (2,0) circle (1.5pt) node[anchor=north]{$2$};
        
        \end{tikzpicture}
        \caption{The coefficients~$a_p$,~$p \in F$, fixed by mutability with respect to~$(w,h_1)$.}
        \label{fig:coefficients_h1}
    \end{figure}
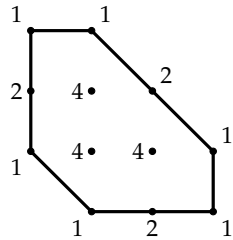
    
    \begin{figure}[ht]
        \label{h1FacetFact}
        \centering
        \begin{tikzpicture}[every node/.style={scale=0.8},scale=0.8]
        
        \draw[black, very thick] (1,0) -- (3,0);
        \draw[black, very thick] (3,0) -- (3,1);
        \draw[black, very thick] (3,1) -- (1,3);
        \draw[black, very thick] (1,3) -- (0,3);
        \draw[black, very thick] (0,3) -- (0,1);
        \draw[black, very thick] (1,0) -- (0,1);
        
        \filldraw[black] (1,0) circle (1.5pt) node[anchor=north east]{$1$};
        \filldraw[black] (3,0) circle (1.5pt) node[anchor=north west]{$1$};
        \filldraw[black] (3,1) circle (1.5pt) node[anchor=south west]{$1$};
        \filldraw[black] (1,3) circle (1.5pt) node[anchor=south west]{$1$};
        \filldraw[black] (0,3) circle (1.5pt) node[anchor=south east]{$1$};
        \filldraw[black] (0,1) circle (1.5pt) node[anchor=north east]{$1$};
        
        \filldraw[black] (1,1) circle (1.5pt) node[anchor=east]{$5$};
        \filldraw[black] (1,2) circle (1.5pt) node[anchor=east]{$5$};
        \filldraw[black] (2,1) circle (1.5pt) node[anchor=east]{$5$};
        
        \filldraw[black] (2,2) circle (1.5pt) node[anchor=south west]{$2$};
        \filldraw[black] (0,2) circle (1.5pt) node[anchor=east]{$2$};
        \filldraw[black] (2,0) circle (1.5pt) node[anchor=north]{$2$};
        
        \end{tikzpicture}
        \caption{The coefficients~$a_p$,~$p \in F$, fixed by mutability with respect to~$(w,h_2)$.}
        \label{fig:coefficients_h2}
    \end{figure}
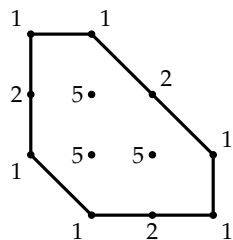

    Thus fixing, for each hexagonal facet~$F$ of~$P$, a choice of~$h_1$ or~$h_2$ defines a set~$S$ of four mutations, and there is a unique normalised, centered Laurent polynomial~$f$ with~$\Newt(f) = P$ such that~$f$ is mutable with respect to each element of~$S$. The Laurent polynomial~$f$ is a rigid maximally mutable Laurent polynomial~\cite[Definition~2.6]{CoatesKasprzykPittonTveiten2021}. In this way we obtain~$16$ rigid MMLPs, which fall into~$5$ equivalence classes under the action of~$\Aut(P)$.
\end{exa}

Given a Laurent polynomial~$f$, write 
\[
    S_f = \{ (w,h) : \text{$f$ is mutable with respect to~$(w,h)$}\}
\]
Conversely, given a set~$S$ of pairs~$(w, h)$ where~$w \in M$ is primitive and~$h \in \CC[w^\perp \cap N]$, write
\[
    L_P(S) = \left\{f \in \CC[N] :
    \begin{minipage}{0.55\textwidth}
        $f$ is normalised and centered,~$\Newt{f} = P$, and~$f$ is mutable with respect to~$(w,h_{gen})$ for all~$(w,h) \in S$
    \end{minipage}
    \right\}
\]
Here~$h_{gen}$ denotes the general normalised Laurent polynomial with the same Newton polytope as~$h$. The following is an immediate consequence of~\cite[Definition~2.6]{CoatesKasprzykPittonTveiten2021}.
\begin{pro} \label{pro:rigid}
    Let~$f$ be a normalised, centered Laurent polynomial with Newton polytope~$P$ and suppose that
    \[
        L_P(S_f) = \{f\}
    \]
    Then~$f$ is a rigid maximally mutable Laurent polynomial.
\end{pro}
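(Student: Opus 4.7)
The proposition is asserted to be an immediate consequence of~\cite[Definition~2.6]{CoatesKasprzykPittonTveiten2021}, so the plan is simply to match the hypothesis with that definition. That definition characterises a rigid MMLP with Newton polytope~$P$ as a normalised, centered Laurent polynomial~$f$ for which there is a set~$S$ of mutations satisfying (a)~$L_P(S) = \{f\}$ (rigidity) and (b)~$S$ is maximal in the sense that~$L_P(S \cup \{(w,h)\}) = \emptyset$ for every mutation~$(w,h) \notin S$ (maximal mutability).

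The natural candidate to take is~$S = S_f$. Condition~(a) is then precisely the hypothesis. For condition~(b), let~$(w,h)$ be any mutation not lying in~$S_f$. By the very definition of~$S_f$, this means that~$f$ is not mutable with respect to~$(w,h)$; in particular~$f \notin L_P(S_f \cup \{(w,h)\})$. Since~$L_P(S_f \cup \{(w,h)\}) \subseteq L_P(S_f) = \{f\}$, this set must be empty, which establishes maximality.

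The one subtlety, and the only point requiring care, is the interplay between~$S_f$, whose elements record mutability with respect to a \emph{specific} factor~$h$, and~$L_P(S)$, which tests mutability with respect to the \emph{generic} factor~$h_{gen}$ with the same Newton polytope. I would address this by invoking the divisibility criterion of Example~\ref{ex:divisibility}: mutability with respect to~$(w,h)$ is governed by a Minkowski-summand condition on~$\Newt(h)$, so the effective content of either formulation depends only on the Newton polytope of the factor, and the two notions can be matched up. Once this is pinned down the proof is a direct unwinding of the definitions; no substantive argument is required, and the main (indeed only) difficulty is parsing the definitional layers.
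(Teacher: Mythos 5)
The paper gives no proof of this proposition at all: it is simply declared to be an immediate consequence of~\cite[Definition~2.6]{CoatesKasprzykPittonTveiten2021}. Your skeleton --- take $S = S_f$, read off the rigidity clause from the hypothesis, and obtain maximality from the monotonicity $L_P(S') \subseteq L_P(S)$ for $S \subseteq S'$ together with the defining property of $S_f$ --- is the natural way to fill this in and is presumably what the authors intend, although you should check that your paraphrase of Definition~2.6 (existence of a maximal $S$ with $L_P(S) = \{f\}$) really is what that definition says, since the paper never states it.

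The problem is that the one point you single out as ``requiring care'' is exactly where your argument breaks, and your proposed fix rests on a false claim. You assert that mutability with respect to $(w,h)$ is governed by a Minkowski-summand condition and hence ``depends only on the Newton polytope of the factor.'' Example~\ref{ex:divisibility} gives the Minkowski-summand condition only as a \emph{necessary} condition; mutability is a genuine divisibility condition and depends on the coefficients of $h$. The paper's own example of the canonical polytope with GRDB ID~1523 refutes your claim directly: the factors $h_1 = (x+y)(1+y)(1+x)(1+x+y)$ and $h_2 = (x+y+xy)(1+x+y)^2$ have the \emph{same} Newton polytope (the hexagonal facet $F$), yet mutability with respect to $(w,h_1)$ and $(w,h_2)$ imposes different conditions on the coefficients of $f$ (compare Figures~\ref{fig:coefficients_h1} and~\ref{fig:coefficients_h2}). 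Consequently the passage between $S_f$, which records mutability with respect to specific factors, and $L_P$, which tests mutability with respect to $h_{gen}$, cannot be dismissed by appealing to Newton polytopes. Concretely, your maximality step needs the implication: if $f$ is not mutable with respect to $(w,h)$ then $f$ is not mutable with respect to $(w,h_{gen})$, so that $f \notin L_P(S_f \cup \{(w,h)\})$. Establishing this requires engaging with the precise meaning of $h_{gen}$ and of the mutation data in~\cite[Definition~2.6]{CoatesKasprzykPittonTveiten2021}; as written, that clause of your proof is not justified.
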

We believe that the converse to Proposition~\ref{pro:rigid} also holds, that is, that~$f$ is a rigid MMLP if and only if~$L_P(S_f) = \{f\}$. In forthcoming work, Coates--Kasprzyk--Pitton use this, along with a large-scale computer algebra calculation, to classify rigid MMLPs in three variables with canonical Newton polytope~\cite{CoatesKasprzykPitton2022}, conditional on the converse to Proposition~\ref{pro:rigid}.
\section{Laurent Inversion and Towers of Bundles}\label{sec:towers}
In this section we give a conceptual interpretation of~\emph{Laurent inversion} in a special case. Laurent inversion is an algorithmic process for recovering a Fano manifold~$X$ from a Laurent polynomial that corresponds to~$X$ under mirror symmetry~\cite{CoatesKasprzykPrince2019}; this process may or may not succeed in any given example. The special case that we analyse, which in practice covers almost all cases in which Laurent inversion has been successfully applied, is where a certain algebraic variety involved, called the~\emph{shape variety}, is a tower of projective bundles. The discussion here reformulates and extends ideas that we learned from Charles Doran, Andrew Harder, and Thomas Prince~\cite{DoranHarder2016,Prince2020}.

\subsection{The Givental/Hori--Vafa Mirror}\label{sec:GHV}
Suppose that~$Y$ is a smooth Fano toric orbifold of dimension~$d$. Choosing a numbering of the rays of the fan~$\Sigma_Y$ for~$Y$ gives a short exact sequence
\begin{equation}
    \label{eq:ray-sequence}
    \xymatrix{
        0 \ar[r] &
        \LL \ar[r] &
        \ZZ^r \ar[r]^{\rho} &
        N \ar[r] & 
        0
    }
\end{equation}
where~$N$ is a~$d$-dimensional lattice and the map~$\rho$ is defined by the rays of~$\Sigma_Y$. Dualising gives a short exact sequence 
\begin{equation}
    \label{eq:dual-ray-sequence}
    \xymatrix{
        0 &
        \LL^\vee \ar[l] &
        (\ZZ^r)^\vee \ar[l]_D &
        M \ar[l] & 
        0 \ar[l]
    }
\end{equation}
where~$M = N^\vee$. There is a canonical isomorphism~$\LL^\vee \cong \Pic(Y)$, and the image of the standard basis for~$(\ZZ^r)^\vee$ under the map~$D$ gives a numbering~$D_1,\ldots,D_r$ of the toric divisors on~$Y$. The Givental/Hori--Vafa mirror~\cite{Givental1998,HoriVafa2000} to~$Y$ is the diagram
\begin{equation} \label{eq:mirror_to_ambient}
    \begin{aligned}
        \xymatrix{
            (\ZZ^r)^\vee \otimes \Cstar \ar[d]^{\pi_D} \ar[r]^-{W} &
            \CC \\
            \LL^\vee \otimes \Cstar
        }
    \end{aligned}
\end{equation}
where~$\pi_D$ is the fibration induced by~$D$ and~$W(x_1,\ldots,x_r) = x_1 + x_2 + \cdots + x_r$.

Suppose now that~$L_1,\ldots,L_c$ are line bundles over~$Y$, and that~$X \subset Y$ is a quasismooth, well-formed, Fano complete intersection\footnote{Quasismooth, well-formed weighted projective complete intersections have been studied by Iano-Fletcher~\cite{IanoFletcher2000}. See~\cite[Definition~25]{CortiHeuberger2017} for definitions applicable in our context.} defined by a general section of~$L_1 \oplus \cdots \oplus L_c$. Choosing disjoint subsets~$S_1,\ldots,S_c$ of~$\{1,2,\ldots,r\}$ such that
\begin{align}
    L_i = \sum_{j \in S_i} D_j && i \in \{1,2,\ldots,c\}
    \notag \\
\intertext{defines a Givental/Hori--Vafa mirror to~$X$. This is the subvariety of~\eqref{eq:mirror_to_ambient} defined by}
     \label{eq:mirror_to_ci}
    \sum_{j \in S_i} x_j = 1 && i \in \{1,2,\ldots,c\}.
\end{align}

\subsection{Towers of bundles}\label{sec:towers_of_bundles}
The equations~\eqref{eq:mirror_to_ci} define a codimension-$c$ subvariety of the total space of the fibration~$\pi_D$ in~\eqref{eq:mirror_to_ambient}, which we call the~\emph{GHV locus}. We will now choose some extra data that allows us to define a toric partial compactification of the GHV locus. This partial compactification arises from an action of~$(\Cstar)^c$ on the total space~$(\ZZ^r)^\vee \otimes \Cstar$ of~$\pi_D$: we realise the codimension-$c$ locus defined by~\eqref{eq:mirror_to_ci} as an open set inside a toric variety~$\cZ = \big((\ZZ^r)^\vee \otimes \CC \big) \sslash (\Cstar)^c$. The ray sequence for the toric variety~$\cZ$ and the dual ray sequence~\eqref{eq:dual-ray-sequence} for~$Y$ fit together as follows:
\begin{equation}
    \label{eq:foreshadow}
\begin{aligned}
    \xymatrix{
        & 0 \\
        && N_\cZ \ar[ul] \\
        0 &
        \LL^\vee \ar[l] &&
        (\ZZ^r)^\vee \ar[ll]_D \ar[ul] &&
        M \ar[ll]_{\rho^T} & 
        0 \ar[l] \\
        &&&& \ZZ^c \ar[ul] \ar@{..>}[ur] \\
        &&&&& 0 \ar[ul]
    }
\end{aligned}
\end{equation}

To this end, let~$S_0 = \{1,2,\ldots,r\} \setminus S_1 \cup \cdots \cup S_c$. We will consider a map\footnote{The transpose of~$\zeta$ will be the dotted arrow in~\eqref{eq:foreshadow}. The existence of~$\zeta$ ensures that the action of~$(\Cstar)^c$ on~$(\ZZ^r)^\vee \otimes \CC$ preserves the fibers of~$\pi_D$ in~\eqref{eq:mirror_to_ambient}.}
\[
    \xymatrix{
        \zeta: N \ar[r] & (\ZZ^c)^\vee
    }
\]
and write~$\zeta_i$ for the image under~$\zeta$ of the~$i$th ray~$\rho_i$ of the fan~$\Sigma_Y$. We will suppose that~$\zeta$ defines the weight matrix for a tower of projective bundles, that is, if~$e_j$ denotes the~$j$th standard basis vector for~$\ZZ^c$ then
\begin{align*}
    \zeta_k(e_j) = 
    \begin{cases}
        w_{jk} & \text{$j<i$ or~$i=0$} \\
        1 & j = i \\
        0 & j > i 
    \end{cases}
    && \text{where~$k \in S_i$ and~$w_{jk} \leq 0$.}   
\end{align*}
We call such a map~$\zeta$ a~\emph{tower of bundles} for the complete intersection~$X \subset Y$.

If we permute~$\{1,2,\ldots,r\}$ such that~$S_0,\ldots,S_c$ occur in that order, i.e.~whenever~$i \in S_k$ and~$j \in S_l$ with~$k<l$ we have~$i<j$, then the matrix of the composition~$\zeta \circ \rho$ takes the form
\begin{equation}
    \label{eq:weight_matrix}
    \left(
    \begin{array}{*{13}c}
        * & \cdots & * & 1 & \cdots & 1 & * & \cdots & * &        &  * & \cdots & * \\
        * & \cdots & * & 0 & \cdots & 0 & 1 & \cdots & 1 & \cdots & * & \cdots & *\\ 
        * & \cdots & * & 0 & \cdots & 0 & 0 & \cdots & 0 & \cdots & \vdots & \cdots & \vdots \\ 
          & \ddots &   &         &   &   &   & \ddots &   &        &  * & \cdots & *\\ 
        * & \cdots & * & 0 & \cdots & 0 & 0 & \cdots & 0 & \cdots & 1 & \cdots & 1\\      
    \end{array}
    \right)
\end{equation}
where~$*$ denotes a non-positive integer. Let~$n = |S_1|+\cdots+|S_c|$. The last~$n$ columns of the matrix above give the weight matrix for an action of~$(\Cstar)^c$ on~$\CC^n$ such that the GIT quotient~$\CC^n \sslash (\Cstar)^c$, with stability condition~$(1,1,\ldots,1)$, is a tower of projective bundles~$\PP$. Each of the first~$|S_0|$ columns defines a line bundle~$E_i \to \PP$ such that the dual bundle~$E_i^\vee$ is nef. 

The map~$\zeta$ is closely related to Doran--Harder's notion of~\emph{amenable collection}~\cite{DoranHarder2016}, and the tower of projective bundles~$\PP$ will play the role of the~\emph{shape variety}~$Z$ from~\cite{CoatesKasprzykPrince2019}. Thus the discussion which follows gives a geometric interpretation of amenable collections, and clarifies the relationship between the shape variety and Givental/Hori--Vafa mirror symmetry.

\subsection{Partially compactifying the total space of the Givental/Hori--Vafa mirror}\label{sec:partially_compactify_total_space}
Let~$\zeta$ be a tower of bundles for the complete intersection~$X \subset Y$. Dualising the map~$\zeta \circ \rho : \ZZ^r \to (\ZZ^c)^\vee$ gives a map~$\ZZ^c \to (\ZZ^r)^\vee$, and hence a map~$\ZZ^c \otimes \Cstar \to (\ZZ^r)^\vee \otimes \Cstar$. Thus we obtain an action of~$(\Cstar)^c$ on~$(\ZZ^r)^\vee \otimes \CC$. Consider the GIT quotient~$\cZ = \big((\ZZ^r)^\vee \otimes \CC \big) \sslash (\Cstar)^c$, with respect to the stability condition~$(1,1,\ldots,1)$. This is the total space
\[
E = \bigoplus_{i \in S_0} E_i
\]
of a direct sum of anti-nef line bundles over~$\PP$, defined by the first~$|S_0|$ columns of the weight matrix~\eqref{eq:weight_matrix}. In this section we show that~$E$ is a partial compactification of the GHV locus~\eqref{eq:mirror_to_ci}. 

\begin{dfn}
    \label{dfn:theta}
    We define functions~$\theta_k \colon (\ZZ^r)^\vee \otimes \CC \to \CC$ recursively by
    \begin{align*}
        \theta_k(x) &= \sum_{j \in S_k} \left( \prod_{m=1}^{k-1} \theta_m(x)^{-w_{mj}} \right) x_j && k \in \{1,2,\ldots,c\}
    \end{align*}
    where~$x = (x_1,x_2,\ldots,x_r) \in (\ZZ^r)^\vee \otimes \CC$. In particular,~$\theta_1(x) = \sum_{j \in S_1} x_j$.
\end{dfn}

\begin{pro} \ \label{pro:partial_compactification}
    \begin{enumerate}
        \item \label{item:partial_compactification_1} Let~$g \in (\Cstar)^c$ and~$x \in (\ZZ^r)^\vee$. The function~$\theta_k$ satisfies 
        \[
            \theta_k(g x) = \chi_k(g) \theta_k(x)
        \]
        where~$\chi_k$ is the~$k$th standard character of~$(\Cstar)^c$. 
        \item \label{item:partial_compactification_2} The function~$\theta_k$ determines a section of the line bundle~$L_k \to E$ defined by the character~$\chi_k$.
        \item \label{item:partial_compactification_3} The open set~$U$ in~$E$ defined by 
        \[
            \big(\theta_1 \ne 0, \theta_2 \ne 0, \ldots, \theta_c \ne 0,x_1 \ne 0,x_2 \ne 0, \ldots, x_r \ne 0\big)
        \]
        is isomorphic to the GHV locus~\eqref{eq:mirror_to_ci}.
    \end{enumerate}
\end{pro}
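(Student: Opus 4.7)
The plan is to dispatch~\ref{item:partial_compactification_1} and~\ref{item:partial_compactification_2} by a formal induction on~$k$, and then reduce~\ref{item:partial_compactification_3} to the observation that the functions~$\theta_k$ measure exactly the defect in the GHV equations, so that rescaling by $(\theta_1(x),\ldots,\theta_c(x))$ gives a canonical way to pick a GHV representative in each $(\Cstar)^c$-orbit.

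For~\ref{item:partial_compactification_1}, the base case~$k=1$ is immediate from the structure of~\eqref{eq:weight_matrix}: every~$x_j$ with~$j\in S_1$ transforms under~$(\Cstar)^c$ with character~$\chi_1$ only, so~$\theta_1(gx)=g_1\theta_1(x)$. For the inductive step I would read off~\eqref{eq:weight_matrix} that a coordinate~$x_j$ with~$j\in S_k$ transforms as $g\cdot x_j = g_k\prod_{m<k} g_m^{w_{mj}}\, x_j$, while by the inductive hypothesis each factor $\theta_m(x)^{-w_{mj}}$ transforms as $g_m^{-w_{mj}}\theta_m(x)^{-w_{mj}}$. The~$g_m$ factors with~$m<k$ cancel telescopically, leaving $\theta_k(gx)=g_k\theta_k(x)=\chi_k(g)\theta_k(x)$. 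Part~\ref{item:partial_compactification_2} then follows at once: because~$w_{mj}\le 0$ throughout, the recursion expresses~$\theta_k$ as an honest polynomial in~$x_1,\ldots,x_r$; by~\ref{item:partial_compactification_1} it is~$\chi_k$-semi-invariant, so descends to a regular section of~$L_k$ on the GIT quotient~$E$.

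For~\ref{item:partial_compactification_3} I would construct mutually inverse morphisms between~$U$ and the GHV locus. A parallel induction shows that any~$y\in(\Cstar)^r$ satisfying $\sum_{j\in S_i} y_j=1$ for each~$i$ has~$\theta_k(y)=1$ for every~$k$: assuming $\theta_m(y)=1$ for $m<k$, all factors $\theta_m(y)^{-w_{mj}}$ collapse to~$1$, so $\theta_k(y)=\sum_{j\in S_k} y_j=1$. Thus the image of such~$y$ in~$E$ lies in~$U$, giving a morphism $\mathrm{GHV}\to U$; it is injective because any $h\in(\Cstar)^c$ sending one GHV point to another must satisfy $h_k=\theta_k(h\cdot y)/\theta_k(y)=1$ for all~$k$. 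Conversely, given $x\in U$, part~\ref{item:partial_compactification_1} shows that the element $g=(\theta_1(x),\ldots,\theta_c(x))\in(\Cstar)^c$ (well-defined since every $\theta_k(x)\ne 0$) produces $y\coloneqq g^{-1}\cdot x$ with $\theta_k(y)=1$ for each~$k$, and the induction read backwards now yields $\sum_{j\in S_k} y_j=1$, so $y$ is a GHV point. These assignments are visibly inverse to one another.

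I do not expect any real obstacle beyond the inductive bookkeeping. The one ancillary point to verify cleanly is that~$(\Cstar)^r$ lies in the $(1,\ldots,1)$-semistable locus of the $(\Cstar)^c$-action, so that the morphism above actually lands in the GIT quotient~$E$; this is immediate from the block structure of~\eqref{eq:weight_matrix} and the identification of the quotient with the total space of $\bigoplus_{i\in S_0} E_i$ recalled in~\S\ref{sec:towers_of_bundles}. Conceptually, the proposition records the fact that the~$c$ characters of~$(\Cstar)^c$ are trivialised along the GHV locus precisely by the~$\theta_k$, so that the GHV equations and the open condition $\theta_k\ne 0$ cut out the same locus.
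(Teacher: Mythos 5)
Your proposal is correct and follows essentially the same route as the paper: parts~\eqref{item:partial_compactification_1} and~\eqref{item:partial_compactification_2} by the direct (inductive) computation with the weight matrix~\eqref{eq:weight_matrix}, and part~\eqref{item:partial_compactification_3} by rescaling a point of~$U$ by $(\theta_1^{-1},\ldots,\theta_c^{-1})$ to land on the GHV locus, with the inclusion of the GHV locus as the inverse. The extra checks you flag (injectivity, semistability of the torus) are harmless refinements of the same argument.
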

\begin{proof}
Part~\eqref{item:partial_compactification_1} here is a straightforward calculation. Part~\eqref{item:partial_compactification_2} is a restatement of part~\eqref{item:partial_compactification_1}. For part~\eqref{item:partial_compactification_3}, consider~$(x_1,\ldots,x_r) \in (\ZZ^r)^\vee \otimes \CC$ such that~$x_1 \ne 0$, \ldots,~$x_r \ne 0$ and~$\theta_1 \ne 0$, \ldots,~$\theta_c \ne 0$. The image of~$(x_1,\ldots,x_r)$ under the action of~$g = (\theta_1^{-1},\ldots,\theta_c^{-1})$ is~$(y_1,\ldots,y_r)$, where
    \begin{equation}
        \label{eq:y_j}
        y_j = 
        \begin{cases}
            x_j \theta_k^{-1} \prod_{m=1}^{k-1} \theta_m^{-w_{mj}} & k \ne 0 \\
            x_j \prod_{m=1}^{c} \theta_m^{-w_{mj}} & k =0
        \end{cases}
    \end{equation}                
and~$j \in S_k$. Part~\eqref{item:partial_compactification_1} now implies that~$y_j$ depends only on the~$(\Cstar)^c$-orbit of~$(x_1,\ldots,x_r)$. Since 
    \begin{align*}
        \sum_{j \in S_k} y_j = 1 && j \in \{1,2,\ldots,c\}
    \end{align*}
we see that mapping~$(x_1,\ldots,x_r)$ to~$(y_1,\ldots,y_r)$ defines a map~$\phi$ from the open set~$U$ to the locus~\eqref{eq:mirror_to_ci}. Setting~$x_j = y_j$ defines an inverse to~$\phi$; this completes the proof. 
\end{proof}

\begin{pro}\label{pro:extends_holomorphically}
The function~$W$ from~\eqref{eq:mirror_to_ci}, which in view of Proposition~\ref{pro:partial_compactification} can be regarded as a function on~$U$, extends holomorphically across the locus
\[
        (\theta_1 = 0, \ldots, \theta_k=0)   
\]
in~$E$.
\end{pro}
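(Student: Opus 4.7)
The plan is to pull $W$ back to $U$ via the isomorphism $\phi$ of Proposition~\ref{pro:partial_compactification}\eqref{item:partial_compactification_3}, massage it using the defining equations of the GHV locus, and then read off that the resulting expression is manifestly polynomial in the $x_j$ (for $j \in S_0$) and the $\theta_m$. Since those functions are regular on all of $E$, the extension is immediate.

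Concretely, first observe that on the GHV locus~\eqref{eq:mirror_to_ci} we have $\sum_{j \in S_k} y_j = 1$ for each $k = 1, \ldots, c$, so
\[
    W = \sum_{j=1}^{r} y_j = c + \sum_{j \in S_0} y_j.
\]
Substituting the formula~\eqref{eq:y_j} for $j \in S_0$ yields
\[
    \phi^* W \;=\; c + \sum_{j \in S_0} x_j \prod_{m=1}^{c} \theta_m^{-w_{mj}}.
\]
The decisive point is then to use the definition of tower of bundles: for $j \in S_0$, every entry $w_{mj}$ in the corresponding column of the weight matrix~\eqref{eq:weight_matrix} is non-positive, so each exponent $-w_{mj}$ is a non-negative integer. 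Consequently no negative powers of the $\theta_m$ appear.

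Next I would verify that this expression descends from $(\mathbb{Z}^r)^\vee \otimes \mathbb{C}$ to $E$. The coordinate $x_j$ has weight $(w_{1j}, \ldots, w_{cj})$ under $(\Cstar)^c$, while by Proposition~\ref{pro:partial_compactification}\eqref{item:partial_compactification_1} each $\theta_m$ has weight $\chi_m$; thus each summand $x_j \prod_m \theta_m^{-w_{mj}}$ has total weight zero and so descends to a regular function on $E$. Being polynomial in the regular functions $x_j$ (for $j \in S_0$, which are fibre coordinates of $\bigoplus_{i \in S_0} E_i$) and the $\theta_m$, the pullback $\phi^* W - c$ therefore defines a holomorphic function on all of $E$, which in particular extends $W$ across the divisor $\bigcup_{k=1}^{c}(\theta_k = 0)$.

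There is no serious obstacle here once the key bookkeeping identity $W = c + \sum_{j \in S_0} y_j$ is in hand; the only thing to be careful about is the sign convention on the $w_{mj}$, and that the expression is genuinely invariant under $(\Cstar)^c$ so as to descend to $E$. Both are direct consequences of the tower-of-bundles structure and of part~\eqref{item:partial_compactification_1} of Proposition~\ref{pro:partial_compactification}.
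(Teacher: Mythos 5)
Your proof is correct and follows the same route as the paper: rewrite $W$ on the GHV locus as $c + \sum_{j \in S_0} y_j$ using the constraints $\sum_{j \in S_k} y_j = 1$, substitute the formula~\eqref{eq:y_j} for $j \in S_0$, and observe that the exponents $-w_{mj}$ are non-negative so the expression is regular on all of~$E$. Your additional check that each summand is $(\Cstar)^c$-invariant and hence descends is a detail the paper leaves implicit, but it is the same argument.
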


\begin{proof}
By construction
\[
        W = \sum_{j \in S_0} y_j + c
\]
on~$U$. The result now follows from~\eqref{eq:y_j}, because~$w_{mj} \leq 0$ for all~$m$ and~$j$.
\end{proof}

\subsection{Laurent polynomial mirrors}\label{sec:laurent_polynomial_mirrors}
In the approach to the classification of Fano varieties pioneered by Corti and Golyshev, Fano varieties of dimension~$n$ conjecturally correspond to equivalence classes of Laurent polynomials in~$n$~variables~\cite{CoatesCortiGalkinGolyshevKasprzyk2013}. This correspondence is an instance of mirror symmetry. If~$X$ is a Fano toric variety then a Laurent polynomial~$f$ that corresponds to~$X$ under mirror symmetry can be obtained from the Givental/Hori--Vafa mirror by restriction to a fiber, as follows. In the notation of~\S\ref{sec:GHV} we take~$X=Y$ and~$c=0$, so that the set of equations~\eqref{eq:mirror_to_ci} is empty. The Givental/Hori--Vafa mirror to~$X$ is then the diagram~\eqref{eq:mirror_to_ambient}. The Laurent polynomial~$f$ arises by restricting the superpotential~$W$ to the fiber of~$\pi_D$ over the identity element in~$\LL^\vee \otimes \Cstar$. From the exact sequence~\eqref{eq:dual-ray-sequence} we see that this fiber is canonically identified with~$M \otimes \Cstar$. The restriction of~$W$ to this fiber is given by 
\[
    f = \sum_{i=1}^r x^{\rho_i} 
\]
where~$\rho_i \in N$ is the~$i$th ray of the fan for~$X$. Here~$x^{\rho_i}$ arises as restriction of the function~$x_i$ to the fiber of~$\pi_D$ over the identity element; put differently,~$x^{\rho_i}$ arises as the image of~$x_i$ (which is the function on~$(\ZZ^r)^\vee \otimes \Cstar$ given by the~$i$th standard basis element in~$\ZZ^r$) under the ray map~$\rho$ in~\eqref{eq:ray-sequence}. Since~$\rk M = \dim X = n$, we see that~$f$ is a Laurent polynomial in~$n$ variables.

If~$X$ is a Fano toric complete intersection (as opposed to a toric variety) then the process of obtaining a Laurent polynomial from the Givental/Hori--Vafa mirror is more involved. It amounts to choosing a~\emph{torus chart} -- an open set birational to an~$n$-dimensional torus -- on the fiber of the locus~\eqref{eq:mirror_to_ci} over the identity element, such that the restriction of the superpotential~$W$ to this open set is a Laurent polynomial. As we will explain in the next section, one way to construct such a torus chart arises from a tower of bundles that satisfy an additional condition.

\subsection{Partially compactifying the fiber of the Givental/Hori--Vafa mirror}\label{sec:partially_compactify_fiber}
In~\S\ref{sec:partially_compactify_total_space} we described how a tower of bundles gives rise to a partial compactification of the GHV locus. In this section we give a refinement of this construction which preserves the fibration structure given by~$\pi_D$ in~\eqref{eq:mirror_to_ci}. That is, we construct a fiberwise partial compactification of the GHV locus. As promised, this also gives a torus chart on the fiber of~$\pi_D$ over the identity element of~$\LL^\vee \otimes \Cstar$. The key ingredient is a tower of bundles that contains a basis.

\begin{dfn}
    \label{dfn:contains_basis}
    We say that a tower of bundles for the toric complete intersection~$X \subset Y$~\emph{contains a basis} if the set of toric divisors~$\{D_i : i \in S_0\}$ contains a basis for the lattice~$\Pic(Y)$.
\end{dfn}

Consider a tower of bundles for the toric complete intersection~$X \subset Y$ that contains a basis. Let~$B \subset S_0$ be such that~$\{D_j : j \in B\}$ is a basis for~$\Pic(Y)$. Without loss of generality we may permute~$\{1,2,\ldots,r\}$ such that whenever~$i \in S_k$ and~$j \in S_l$ with~$k<l$ we have~$i<j$, and that~$B = \{1,2,\ldots,|B|\}$. The ray sequence~\eqref{eq:ray-sequence} and the tower of bundles together give a diagram:
\begin{equation}
    \label{eq:ray-sequence-plus-tower}
    \begin{aligned}
        \xymatrix{
            0 \ar[r] &
            \LL \ar[rr]^-{D^T} && 
            \ZZ^r \ar[rr]^-\rho \ar[rd] &&
            N \ar[r] \ar[ld]^-\zeta &
            0 \\
            &&&&(\ZZ^c)^\vee
        }
    \end{aligned}   
\end{equation}
Write~$B'$ =~$\{1,2,\ldots,r\} \setminus B$, so that~$\ZZ^r = \ZZ^B \oplus \ZZ^{B'}$, and let~$p_B \colon \ZZ^r \to \ZZ^B$ denote the projection. Our assumptions about the basis guarantee both that~$p_B \circ D^T$ is an isomorphism and that~$\{\rho_j : j \in B'\}$ is a basis for~$N$. Thus there are (unique) identifications of~$\LL$ with~$\ZZ^B$ and~$N$ with~$\ZZ^{B'}$ such that in the diagram
\begin{equation}
    \label{eq:ray-sequence-plus-tower-with-splitting}
    \begin{aligned}
        \xymatrix{
            0 \ar[r] &
            \ZZ^B \ar[rr]^-{D^T} && 
            \ZZ^r \ar[rr]^-\rho \ar[rd] &&
            \ZZ^{B'} \ar[r] \ar[ld]^-\zeta &
            0 \\
            &&&&(\ZZ^c)^\vee
        }
    \end{aligned}   
\end{equation}
induced from~\eqref{eq:ray-sequence-plus-tower}, both~$p_B \circ D^T$ and~$\rho|_{\ZZ^{B'}}$ are identity maps. Thus a tower of bundles with a basis gives splittings in two different senses:
\begin{enumerate}
    \item[(a)] \hypertarget{item:splitting_a}{} a splitting~$\ZZ^{B'} \to \ZZ^r$ of the ray map~$\rho$, and hence of the ray sequence~\eqref{eq:ray-sequence}; 
    \item[(b)] \hypertarget{item:splitting_b}{} a splitting~$N \otimes \Cstar \cong \ZZ^{B'} \otimes \Cstar$ of the torus~$N \otimes \Cstar$.
\end{enumerate}
Splitting (\hyperlink{item:splitting_b}{b}) here is equivalent, by duality, to:
\begin{enumerate}
    \item[(b$'$)] \hypertarget{item:splitting_b_prime}{} a splitting~$M \otimes \Cstar \cong (\ZZ^{B'})^\vee \otimes \Cstar$ of the torus~$M \otimes \Cstar$.
\end{enumerate}
Recall that~$M \otimes \Cstar$ is the fiber of the Givental/Hori--Vafa mirror~\eqref{eq:mirror_to_ambient} for the ambient space~$Y$.

Consider now the mirror fibration~$\pi_D$ from~\eqref{eq:mirror_to_ambient}. This is a principal~$M \otimes \Cstar$-bundle
\begin{equation}
    \label{eq:principal_torus_bundle}
    \begin{aligned}
        \xymatrix{
            M \otimes \Cstar \ar[r] & (\ZZ^r)^\vee \otimes \Cstar \ar[d]^-{\pi_D} \\
            & \LL^\vee \otimes \Cstar
        }
    \end{aligned}
\end{equation}
and splitting (\hyperlink{item:splitting_b_prime}{b$'$}) identifies this with a principal~$(\ZZ^{B'})^\vee \otimes \Cstar$-bundle. Since it is split, the torus~$(\ZZ^{B'})^\vee \otimes \Cstar$  acts canonically on the vector space~$(\ZZ^{B'})^\vee \otimes \CC = (\CC^{B'})^\vee$. We form the associated vector bundle to~\eqref{eq:principal_torus_bundle} with fiber~$(\CC^{B'})^\vee$; this is a vector bundle~$\cV \to \LL^\vee \otimes \Cstar$ of rank~$|B'|$. The vector bundle~$\cV$ carries a fiberwise action of~$(\Cstar)^c$, given by dualising the map~$\zeta$ in~\eqref{eq:ray-sequence-plus-tower-with-splitting}.

\begin{dfn}
    Let~$\cZ$ denote the GIT quotient~$\cV \sslash (\Cstar)^c$ with stability condition~$(1,1,\ldots,1)$, and let~$Z$ denote the fiber of~$\cZ \to  \LL^\vee \otimes \Cstar$ over the identity element~$e \in \LL^\vee \otimes \Cstar$.
\end{dfn}

\begin{pro}
    \ 
    \begin{enumerate}
        \item \label{item:anonymous_1} $\cZ$ is an open subset of the partial compactification~$E$ defined in~\S\ref{sec:partially_compactify_total_space};
        \item \label{item:anonymous_2} $\cZ$ is a fiberwise partial compactification of the GHV locus~\eqref{eq:mirror_to_ci};
        \item \label{item:anonymous_3} the fiber~$Z$ is a toric variety; 
        \item \label{item:anonymous_4} if~$S_0 = B$ then~$Z$ is isomorphic to~$\PP$, the tower of projective bundles defined in~\S\ref{sec:towers_of_bundles};
        \item \label{item:anonymous_5} if~$S_0$ strictly contains~$B$ then~$Z$ is isomorphic to the total space of the direct sum of anti-nef line bundles over~$\PP$ defined by the columns of~\eqref{eq:weight_matrix} indexed by~$S_0 \setminus B$. 
    \end{enumerate}
\end{pro}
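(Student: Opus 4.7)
The plan is to use the splittings recorded in~\eqref{eq:ray-sequence-plus-tower-with-splitting}, which come from the choice of basis~$B$, to identify~$\cV$ explicitly as an open subvariety of~$(\ZZ^r)^\vee \otimes \CC$, and then to read off all five statements from the weight matrix~\eqref{eq:weight_matrix}.

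First I would dualise the splittings to obtain a decomposition~$(\ZZ^r)^\vee \otimes \CC \cong (\LL^\vee \otimes \CC) \oplus (M \otimes \CC)$, with the~$B$-coordinates parametrizing the first summand and the~$B'$-coordinates parametrizing the second. Restricted over~$\LL^\vee \otimes \Cstar$, the principal torus bundle~\eqref{eq:principal_torus_bundle} is trivialized by this decomposition, and its associated vector bundle~$\cV$, with fiber~$(\CC^{B'})^\vee$, is identified with the open locus~$V \subset (\ZZ^r)^\vee \otimes \CC$ on which~$x_j \ne 0$ for every~$j \in B$. Part~\eqref{item:anonymous_1} then reduces to two checks: that~$V$ is saturated for the~$(\Cstar)^c$-action with stability~$(1,1,\ldots,1)$, and that the fiberwise~$(\Cstar)^c$-action on~$\cV$ agrees with the restriction from~$E$. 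Both follow because the~$(\Cstar)^c$-action on~$(\ZZ^r)^\vee \otimes \CC$ preserves the fibers of~$\pi_D$ (the existence of~$\zeta$ is precisely what forces this), so it acts trivially on the~$\LL^\vee$-factor of the splitting and leaves the~$B$-coordinates invariant.

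With part~\eqref{item:anonymous_1} in hand, parts~\eqref{item:anonymous_2} and~\eqref{item:anonymous_3} follow quickly. By Proposition~\ref{pro:partial_compactification}, the GHV locus is the open subset~$U$ of~$E$ on which every~$x_j$ and every~$\theta_k$ is nonzero, and in particular~$U \subset V$; compatibility with~$\pi_D$ then says that~$\cZ$ partially compactifies~$U$ fiberwise over~$\LL^\vee \otimes \Cstar$. The fiber~$Z$ over the identity is the GIT quotient of the vector space~$(\CC^{B'})^\vee$ by the induced~$(\Cstar)^c$-action, which is by construction a toric variety.

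For the remaining parts I would read off the~$(\Cstar)^c$-weights on~$(\CC^{B'})^\vee$ directly from the columns of~\eqref{eq:weight_matrix} indexed by~$B'$. When~$S_0 = B$ we have~$B' = S_1 \sqcup \cdots \sqcup S_c$, so the relevant columns are precisely the last~$n$ columns of~\eqref{eq:weight_matrix}, which by the definition of a tower of bundles form the weight matrix of~$\PP$; this gives~\eqref{item:anonymous_4}. When~$S_0 \supsetneq B$ there are additional coordinates indexed by~$S_0 \setminus B$, and taking the GIT quotient in two stages -- first quotienting the coordinates indexed by~$S_1 \sqcup \cdots \sqcup S_c$ to recover~$\PP$, then the remaining coordinates indexed by~$S_0 \setminus B$ -- exhibits~$Z$ as the total space of the line bundles on~$\PP$ whose weights are the corresponding columns of~\eqref{eq:weight_matrix}. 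Non-positivity of those columns, which is part of the tower-of-bundles hypothesis, gives the anti-nef condition and finishes~\eqref{item:anonymous_5}. I expect the main obstacle to be the bookkeeping in part~\eqref{item:anonymous_1}: verifying that the identification~$\cV \cong V$ is equivariant for both the~$\LL^\vee \otimes \Cstar$-action (which gives the bundle structure) and the fiberwise~$(\Cstar)^c$-action used to form the GIT quotient. Everything else is then essentially linear algebra applied to~\eqref{eq:weight_matrix}.
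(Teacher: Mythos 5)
Your overall architecture (identify $\cV$ with the locus $V=\{x_j\neq 0: j\in B\}$, then read parts (3)--(5) off the $B'$-columns of~\eqref{eq:weight_matrix}) is the same as the paper's, and your treatment of parts (3), (4) and (5) is fine. But the justification you give for part~(1) contains a concrete error: the $(\Cstar)^c$-action does \emph{not} leave the $B$-coordinates invariant. The weight with which $(\Cstar)^c$ acts on the coordinate $x_j$ is the $j$th column of~\eqref{eq:weight_matrix}, and for $j\in B\subseteq S_0$ these are the starred columns, which are non-positive but by no means zero. In the basic example of~\S\ref{sec:basic} the weight matrix is $(-2,-1,1,1,1)$ with $B=S_0=\{1,2\}$, so the action scales $x_1$ and $x_2$ with weights $-2$ and $-1$; in Example~\ref{exa:dP6} the $S_0$-columns are $(-1,-1)^T$ and $(a-1,-1)^T$. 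You have conflated two different statements: that the action covers the trivial action on the base $\LL^\vee\otimes\Cstar$ of $\pi_D$ (true, and this is what the existence of $\zeta$ buys, since $D\circ\rho^T=0$), and that it fixes the coordinates $x_j$ for $j\in B$ (false). These coincide only if $\pi_D$ were literally the projection onto the $B$-coordinates, which it is not: $D$ does not vanish on the $B'$-summand of $\ZZ^r$, so the trivialization coming from splitting~(\hyperlink{item:splitting_a}{a}) is a sheared one.

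This matters because the agreement of the two $(\Cstar)^c$-actions -- the fiberwise action on $\cV$, whose weights are only the $B'$-columns, versus the restriction of the global action on $(\ZZ^r)^\vee\otimes\CC$, whose weights include the nonzero $B$-columns -- is precisely the nontrivial content of part~(1); it cannot be dispatched by declaring the $B$-coordinates fixed. The paper's own proof of~(1) simply asserts that the embedding respects the action and moves on, and then, notably, it does \emph{not} deduce part~(2) from part~(1) as you do: it reruns the argument of Proposition~\ref{pro:partial_compactification} fiber by fiber on $\cV_q$, using the distinguished coordinates $(x_j: j\notin B)$ and the functions $\theta_k$ and $y_j$ intrinsically on each fiber. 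That route makes parts (2)--(5) independent of the delicate equivariance claim in~(1). As written, your part~(2) inherits the gap in your part~(1), so you should either supply a genuine verification that the two actions are intertwined by your chosen embedding $\cV\cong V$ (they are not intertwined by the coordinate-wise identification), or restructure as the paper does so that the fiberwise statements stand on their own.
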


\begin{proof}
The splitting (\hyperlink{item:splitting_a}{a}) shows that the principal~$M \otimes \Cstar$-bundle~\eqref{eq:principal_torus_bundle} is in fact trivial:
    \[
        \xymatrix{
            (\ZZ^{B'})^\vee \otimes \Cstar \ar[r] & (\ZZ^r)^\vee \otimes \Cstar \ar[d]^-{\pi_D} \\
            & (\ZZ^B)^\vee \otimes \Cstar
        }
    \]
Thus~$(\ZZ^r)^\vee \otimes \CC$ contains the total space of~$\cV$ as an open set. The embedding of~$\cV$ into~$(\ZZ^r)^\vee \otimes \CC$ respects the action of~$(\Cstar)^c$, so this proves~\eqref{item:anonymous_1}.

To prove~\eqref{item:anonymous_2}, we argue exactly as in Proposition~\ref{pro:partial_compactification}. Fix~$q \in \LL^\vee \otimes \Cstar$ and consider the fiber~$\cV_q$ of the vector bundle~$\cV$ over~$q$. The splitting (\hyperlink{item:splitting_b_prime}{b$'$}) gives rise to distinguished co-ordinates~$(x_j : j \not \in B)$, on~$\cV_q$, and we define\footnote{Or, equivalently, we could restrict the functions~$\theta_k: (\ZZ^r)^\vee \otimes \CC \to \CC$ from Definition~\ref{dfn:theta} to~$\cV_q$ via the embedding~$\cV \to (\ZZ^r)^\vee \otimes \CC$ just discussed.} functions~$\theta_k: \cV_q \to \CC$ exactly as in Definition~\ref{dfn:theta}:
    \begin{align*}
        \theta_k(x) &= \sum_{j \in S_k} \left( \prod_{m=1}^{k-1} \theta_m(x)^{-w_{mj}} \right) x_j && k \in \{1,2,\ldots,c\}
    \end{align*}
    Consider functions~$y_j$ defined as in~\eqref{eq:y_j}:
    \begin{align*}
        y_j = 
        \begin{cases}
            x_j \theta_k^{-1} \prod_{m=1}^{k-1} \theta_m^{-w_{mj}} & k \ne 0 \\
            x_j \prod_{m=1}^{c} \theta_m^{-w_{mj}} & k =0
        \end{cases}
        && j \not \in B
    \end{align*}
    where~$k$ is such that~$j \in S_k$. The open set~$\tilde{U} \subset \cV_q$ defined by
    \[
    \big(\theta_1 \ne 0, \theta_2 \ne 0, \ldots, \theta_c \ne 0) \cap (x_j \ne 0 : j \not \in B\big)
    \]
    projects to an open set~$U$ in the quotient~$\cV_q \sslash (\Cstar)^c$, and the map
    \begin{equation}
        \label{eq:phi_tilde}
        \begin{aligned}
        \tilde{\phi} : \tilde{U} & \longrightarrow (\ZZ^{B'})^\vee \otimes \Cstar \\
        (x_j : j \not \in B) & \longmapsto (y_j : j \not \in B)
        \end{aligned}
    \end{equation}
    descends to give a well-defined map~$\phi$ from~$U$ to the subset of~$(\ZZ^{B'})^\vee \otimes \Cstar$  defined by
    \begin{align*}
        \sum_{j \in S_i} y_j = 1 && i \in \{1,2,\ldots,c\}.
    \end{align*}
The map~$\phi$ is an isomorphism, with inverse given by setting~$x_j = y_j$,~$j \not \in B$. Thus the Givental/Hori--Vafa locus~\eqref{eq:mirror_to_ci} embeds into~$\cZ$ as an open set, and this embedding exhibits~$\cZ$ as a fiberwise partial compactification of~\eqref{eq:mirror_to_ci}.
    
Part~\eqref{item:anonymous_3} is obvious, as~$Z$ is the GIT quotient~$\cV_e \sslash (\Cstar)^k$. To identify this quotient, we examine the weights of the~$(\Cstar)^k$-action on the fiber. These are the entries in the matrix of the map~$\zeta$ in diagram~\eqref{eq:ray-sequence-plus-tower-with-splitting}, with respect to the standard bases for~$\ZZ^{B'}$ and~$(\ZZ^c)^\vee$. This matrix is given by the last~$|B'|$ columns of~\eqref{eq:weight_matrix}. If~$S_0 = B$ then this is
    \[
        \left(
        \begin{array}{*{12}c}
            1 & \cdots & 1 & * & \cdots & * & \cdots & * & \cdots & * \\
            0 & \cdots & 0 & 1 & \cdots & 1 & \cdots & * & \cdots & *\\ 
            0 & \cdots & 0 & 0 & \cdots & 0 & \cdots & \vdots & \cdots & \vdots \\ 
           &        &   &   & \ddots &   &        & * & \cdots & *\\ 
            0 & \cdots & 0 & 0 & \cdots & 0 & \cdots & 1 & \cdots & 1\\      
        \end{array}
        \right)
    \]
which proves~\eqref{item:anonymous_4}. Otherwise there are an additional~$|S_0 \setminus B|$ leading columns, all of which contain non-positive entries; this proves~\eqref{item:anonymous_5}.
\end{proof}

\subsection{Laurent polynomials and scaffoldings from towers of bundles}\label{sec:scaffoldings_from_towers}
In~\S\ref{sec:partially_compactify_fiber} we constructed a toric partial compactification~$Z$ of the fiber of the Givental/Hori--Vafa mirror to a toric complete intersection~$X \subset Y$. In this section we explain how this gives rise to a Laurent polynomial~$f$ that corresponds to~$X$ under mirror symmetry (see~\S\ref{sec:laurent_polynomial_mirrors}). The Laurent polynomial~$f$ arises as a function on the dense torus in~$Z$, and comes equipped with a~\emph{scaffolding}~\cite{CoatesKasprzykPrince2019}. This is a decomposition of~$f$ into summands, called~\emph{struts}, of a specific form. 

The superpotential~$W$ in the Givental/Hori--Vafa mirror restricts to the locus~\eqref{eq:mirror_to_ci} to give
\[
    W = \sum_{j \in S_0} y_j + c
\]
(cf.~Proposition~\ref{pro:extends_holomorphically}), and restricts to the fiber over~$e \in \LL^\vee \otimes \Cstar$ to give
\[
    W = \sum_{j \in S_0} y^{\rho_j} + c
\]
where~$\rho_j \in N$ is the~$j$th ray of the fan for~$Y$. To obtain a meromorphic function on~$Z$, we first pull~$W$ back to the open set~$\tilde{U} \subset \cV_e$ along the map~\eqref{eq:phi_tilde}, finding
\begin{equation}
    \label{eq:pull_back_lift}
    {\tilde{\phi}}^* (W) = \sum_{j \in S_0} x^{\rho_j} \prod_{m=1}^c \theta_m^{-{w_{mj}}} + c
\end{equation}
The function~${\tilde{\phi}}^* (W)$ is invariant under the action of~$(\Cstar)^c$ by construction -- and indeed we see that each summand on the right-hand side of~\eqref{eq:pull_back_lift} is homogeneous of weight zero -- and so~\eqref{eq:pull_back_lift} descends to give a well-defined function~$\phi^*(W)$ on~$U \subset Z$. As in Proposition~\ref{pro:extends_holomorphically}, the function~$f \coloneqq \phi^* W$ extends holomorphically across the locus
\[
    (\theta_1=0,\ldots,\theta_k=0)
\]
in~$Z$, and thus defines a holomorphic function on the dense torus in~$Z$.

We want to regard the function~$f$ as a Laurent polynomial. That is, we want to construct a splitting of the dense torus  
\[
    T_Z = \left(\big(\ZZ^{B'}\big)^\vee \otimes \Cstar\right) / (\Cstar)^c
\] 
in~$Z$. Such a splitting will give distinguished co-ordinates on~$T$, and expressing~$f$ in terms of these co-ordinates will yield a Laurent polynomial. Choose a set~$F$ made up of one element from each~$S_i$,~$i \in \{1,2,\ldots,c\}$. Then 
\[
    T_Z \cong \big(\ZZ^{B' \setminus F}\big)^\vee \otimes \Cstar.
\]
To express~$f$ in these co-ordinates, we take the expression~\eqref{eq:pull_back_lift} and set~$x_j = 1$ for all~$j \in F$. The result is a Laurent polynomial in variables 
\[
    \{x_j : j \in B', j \not \in F\}
\]
Each summand~$x^{\rho_j} \prod_{m=1}^c \theta_m^{-{w_{mj}}}$ in~\eqref{eq:pull_back_lift} gives a strut, and so~$f$ comes with a distinguished scaffolding.

\begin{rem}
    In the original work on Laurent inversion~\cite{CoatesKasprzykPrince2019}, the struts in a scaffolding are polytopes of sections of nef line bundles on a toric variety called the shape variety. Here we consider struts as specific sections of line bundles on the toric variety~$Z$. That these line bundles are nef follows from the fact that~$\prod_{m=1}^c \theta_m^{-{w_{mj}}}$ is a section of the line bundle~$E_j^\vee \to Z$ where the dual bundle~$E_j \to Z$ is defined by the~$j$th column of the weight matrix~\eqref{eq:weight_matrix}. Since~$w_{mj} \leq 0$ for all~$m$ and~$j$, the line bundle~$E_j^\vee \to Z$ is nef.
\end{rem}

\begin{rem}
    \label{rem:binomial}
    Recall that, if a Laurent polynomial~$f$ corresponds under mirror symmetry to a Fano variety~$X$, then it is expected that there is a qG\nobreakdash-degeneration with general fiber~$X$ and special fiber the toric variety~$X_f$. In our situation Doran--Harder have constructed an embedded degeneration~\cite{DoranHarder2016} of the complete intersection~$X$ to the toric subvariety of~$Y$ defined, in Cox co-ordinates~$(z_i)_{i=1}^r$, by the binomials
    \begin{align*}
        \prod_{j \in S_i} z_j = \prod_{j \not \in S_i} z_j^{-w_{ij}} && i \in \{1,2,\ldots,c\}
    \end{align*}
    This is the expected degeneration of~$X$ to~$X_f$~\cite[Proposition~12.2]{CoatesKasprzykPrince2019}.
\end{rem}

\begin{exa}
    \label{exa:dP6}
    Let~$X$ denote a complete intersection of type~$(1,1) \cdot (1,1)$ in~$Y=\PP^2 \times \PP^2$, and write the ray sequence~\eqref{eq:ray-sequence} for~$Y$ as
    \[
        \xymatrix{
            0 \ar[r] &
            \ZZ^2 \ar[rr]^-{
                \begin{pmatrix}
                    1 & 0 \\
                    0 & 1 \\
                    1 & 0 \\
                    0 & 1 \\
                    1 & 0 \\
                    0 & 1
                \end{pmatrix}
            } &&
            \ZZ^6 \ar[rrrrr]^{ 
                \begin{pmatrix}
                    -1 &  0 & 1 & 0 & 0 & 0 \\
                    -1 &  0 & 0 & 0 & 1 & 0 \\
                     0 & -1 & 0 & 1 & 0 & 0 \\
                     0 & -1 & 0 & 0 & 0 & 1 
                \end{pmatrix}
            } &&&&&
            \ZZ^4 \ar[r] & 
            0
        }
    \]
    Set~$S_0 = \{1,2\}$,~$S_1 = \{3,4\}$,~$S_2 = \{5,6\}$, and consider a weight matrix~\eqref{eq:weight_matrix} for a tower of bundles:
    \[
        \begin{pmatrix}
            * & * & 1 & 1 & 0 & -a \\
            * & * & 0 & 0 & 1 & 1
        \end{pmatrix}
    \]
    Solving for the leftmost two columns, using the fact that the weight matrix left-annihilates the first matrix in the ray sequence, yields
    \[
        \begin{pmatrix}
            -1 & a-1 & 1 & 1 & 0 & -a \\
            -1 & -1 & 0 & 0 & 1 & 1
        \end{pmatrix}
    \]
    and since we need~$a-1 \leq 0$ and~${-a} \leq 0$ it follows that~$a$ must be either~$0$ or~$1$. Both choices give a tower of bundles with basis, and in each case~$S_0 = B$. We have:
    \begin{align*}
        \theta_1 &= x_3 + x_4 \\
        \theta_2 &= x_5 + (x_3+x_4)^a x_6
    \end{align*}
    The pullback~\eqref{eq:pull_back_lift} is
    \[
        \frac{(x_3+x_4)(x_5 + (x_3+x_4)^a x_6)}{x_3 x_5} +
        \frac{(x_3+x_4)^{1-a}(x_5 + (x_3+x_4)^a x_6)}{x_2 x_4} + 2
    \]
    and we regard this as a Laurent polynomial by setting
    \begin{align*}
        x_3 = 1 && x_4 = x && x_5 = 1 && x_6 = y
    \end{align*}
    obtaining either
    \[
        f = (1+x)(1+y) + \frac{(1+x)(1+y)}{xy} + 2
    \]
    if~$a=0$, or 
    \[
        f = (1+x)(1+(1+x)y) + \frac{1+(1+x)y}{xy} + 2
    \]
    if~$a=1$. The two Newton polytopes are shown, together with the Newton polytopes of the struts, in Figure~\ref{fig:tower_example}. It is striking that the two Laurent polynomials, and scaffoldings, that result differ by a mutation~\cite{AkhtarCoatesGalkinKasprzyk2012}.
    \begin{figure}[ht]
        \centering
        \begin{tikzpicture}
        
        
        \draw[black, very thick] (-1,-1) -- (-1,0) -- (0,1) -- (1,1) -- (1,0) -- (0,-1) -- cycle;
        
        \filldraw[black] (-1,-1) circle (1.5pt) node[anchor=north east] {$1$};
        \filldraw[black] (-1,0) circle (1.5pt) node[anchor=east] {$1$};
        \filldraw[black] (0,1) circle (1.5pt) node[anchor=south] {$1$};
        \filldraw[black] (1,1) circle (1.5pt) node[anchor=south west] {$1$};
        \filldraw[black] (1,0) circle (1.5pt) node[anchor=west] {$1$};
        \filldraw[black] (0,-1) circle (1.5pt) node[anchor=north west] {$1$};

        \filldraw[color=MidnightBlue!60, fill=MidnightBlue!65, very thick] (0+1/10,0+1/10) -- (0+1/10,1-1/10) -- (1-1/10,1-1/10) -- (1-1/10, 0+1/10) -- cycle;
        
        \filldraw[color=BurntOrange!60, fill=BurntOrange!65, very thick] (-1+1/10,-1+1/10) -- (-1+1/10,0-1/10) -- (0-1/10,0-1/10) -- (0-1/10, -1+1/10) -- cycle;
        
        \draw[black, thick] (-0.05,0.05) -- (0.05,-0.05);
        \draw[black, thick] (-0.05,-0.05) -- (0.05,0.05);
        
        \end{tikzpicture}
        \qquad
        \qquad
        \qquad
        \begin{tikzpicture}
        
        
        \draw[black, very thick] (-1,-1) -- (-1,0) -- (0,1) -- (2,1) -- (1,0) -- cycle;
        
        \filldraw[black] (-1,-1) circle (1.5pt) node[anchor=north east] {$1$};
        \filldraw[black] (-1,0) circle (1.5pt) node[anchor=east] {$1$};
        \filldraw[black] (0,1) circle (1.5pt) node[anchor=south] {$1$};
        \filldraw[black] (1,1) circle (1.5pt) node[anchor=south] {$2$};
        \filldraw[black] (2,1) circle (1.5pt) node[anchor=south west] {$1$};
        \filldraw[black] (1,0) circle (1.5pt) node[anchor=north west] {$1$};

        \filldraw[color=MidnightBlue!60, fill=MidnightBlue!65, very thick] (0+1/10,0+1/10) -- (0+1/10,1-1/10) -- (2-5/20,1-1/10) -- (1-1/20, 0+1/10) -- cycle;
        
        \filldraw[color=BurntOrange!60, fill=BurntOrange!65, very thick] (-1+1/10,-1+3/20) -- (-1+1/10,0-1/10) -- (0-3/20,0-1/10) -- cycle;
        
        \draw[black, thick] (-0.05,0.05) -- (0.05,-0.05);
        \draw[black, thick] (-0.05,-0.05) -- (0.05,0.05);
        
        \end{tikzpicture}
        \caption{The scaffoldings arising from towers of bundles in Example~\ref{exa:dP6}.}
        \label{fig:tower_example}
        \end{figure}
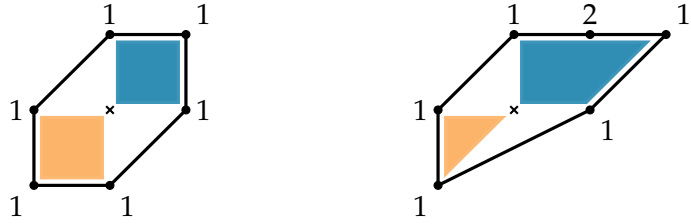
\end{exa}

\subsection{Reconstructing toric complete intersections from scaffoldings}\label{sec:reconstruction}
Laurent inversion in this context amounts to the assertion that the construction in~\S\ref{sec:scaffoldings_from_towers} is reversible: that we can reconstruct~$X \subset Y$ and the tower of bundles from the resulting Laurent polynomial~$f$ and its scaffolding. This is clear. From the scaffolding one can read off the functions~$\theta_k(x)$, or more precisely the restrictions
\begin{align*}
    \theta_k(x)\big|_{\text{$x_j=1$ for~$j \in F$}} && k \in \{1,2,\ldots,c\}.
\end{align*}
In particular this determines the weights~$w_{jk}$ in the weight matrix~\eqref{eq:weight_matrix} with~$k$ in~$S_1 \cup S_2 \cup \cdots \cup S_c$. These are the shaded weights here:
\[
    \left(
    \begin{array}{*{15}c}
        * & \cdots & * & 1 & \cdots & 1 & \cellcolor{gray!20} * & \cellcolor{gray!20} \cdots & \cellcolor{gray!20} \cellcolor{gray!20} * &        & \cellcolor{gray!20} * & \cellcolor{gray!20} \cdots & \cellcolor{gray!20} * \\
        * & \cdots & * & 0 & \cdots & 0 & 1 & \cdots & 1 & \cdots & \cellcolor{gray!20} * & \cellcolor{gray!20} \cdots & \cellcolor{gray!20} *\\ 
        * & \cdots & * & 0 & \cdots & 0 & 0 & \cdots & 0 & \cdots & \cellcolor{gray!20} \vdots & \cellcolor{gray!20} \cdots & \cellcolor{gray!20} \vdots \\ 
          & \ddots &   &   &        &   &   & \ddots &   &        & \cellcolor{gray!20} * & \cellcolor{gray!20} \cdots & \cellcolor{gray!20} *\\ 
        * & \cdots & * & 0 & \cdots & 0 & 0 & \cdots & 0 & \cdots & 1 & \cdots & 1\\      
    \end{array}
    \right)
\]
The remaining weights are determined by the powers of~$\theta_k$ that occur in the struts, and so the scaffolding determines the entire weight matrix~\eqref{eq:weight_matrix}. The Laurent polynomial~$f$ is the restriction of~\eqref{eq:pull_back_lift}:
\[
    f = c + \sum_{j \in S_0} x^{\rho_j} \prod_{m=1}^c \theta_m^{-{w_{mj}}}\Big|_{\text{$x_j = 1$ for~$j \in F$}}
\]
But before restriction, each term in~\eqref{eq:pull_back_lift} is homogeneous of degree zero, and we have already reconstructed the matrix of weights. Thus 
\[
    {\tilde{\phi}}^* (W) = c + \sum_{j \in S_0} x^{\rho_j} \prod_{m=1}^c \theta_m^{-{w_{mj}}}
\]
can be uniquely reconstructed from~$f$ by homogeneity. This determines the rays~$\rho_j$,~$j \in S_0$, of the fan for~$Y$; since the other rays are the standard basis for~$N \cong \ZZ^{B'}$ and the toric variety~$Y$ is Fano by assumption, this completely determines~$Y$. Furthermore the weight matrix~\eqref{eq:weight_matrix} determines the subsets~$S_1,\ldots,S_c$, and hence the line bundles~$L_1,\ldots,L_c$. Thus we can reconstruct the presentation of~$X$ as a toric complete intersection~$X \subset Y$ from the scaffolding of~$f$.
\section{Constructions via Laurent inversion}\label{sec:laurent_inversion}
In this section we apply the reconstruction procedure developed in~\S\ref{sec:reconstruction} to several concrete examples.
\subsection{Basic example}\label{sec:basic}
Consider the Laurent polynomial 
\[
  f=x+y+\frac{x}{y}+\frac{2}{y}+\frac{1}{xy}+\frac{2}{x}+\frac{y}{x}
\]
with scaffolding
\[
  f=\textcolor{MidnightBlue}{\frac{(x+y+1)^2}{xy}}+\textcolor{BurntOrange}{(x+y+1)}-3
\]
The Newton polytope~$P$ of~$f$ is as pictured in Figure~\ref{fig:dP5}.
\begin{figure}[ht]
\centering
\begin{tikzpicture}

\draw[black, very thick] (-6,-1) -- (-4,-1) -- (-4,0) -- (-5,1) -- (-6,1) -- cycle;

\filldraw[black] (-6,-1) circle (1.5pt) node[anchor=north east] {$1$};
\filldraw[black] (-5,-1) circle (1.5pt) node[anchor=north] {$2$};
\filldraw[black] (-4,-1) circle (1.5pt) node[anchor=north west] {$1$};
\filldraw[black] (-4,0) circle (1.5pt) node[anchor=west] {$1$};
\filldraw[black] (-5,1) circle (1.5pt) node[anchor=south] {$1$};
\filldraw[black] (-6,1) circle (1.5pt) node[anchor=south east] {$1$};
\filldraw[black] (-6,0) circle (1.5pt) node[anchor=east] {$2$};

\filldraw[color=MidnightBlue!60, fill=MidnightBlue!65, very thick] (-6+1/10,-1+1/10) -- (-4-2/10,-1+1/10) -- (-6+1/10,1-2/10) -- cycle;

\filldraw[color=BurntOrange!60, fill=BurntOrange!65, very thick] (-4-1.5/10,0+1/20) -- (-5+1/20,0+1/20) -- (-5+1/20,1-1.5/10) -- cycle;

\draw[black, thick] (-5.05,0.05) -- (-4.95,-0.05);
\draw[black, thick] (-5.05,-0.05) -- (-4.95,0.05);

\end{tikzpicture}
\caption{A scaffolding of~$f$ with shape~$\PP^2$.}
\label{fig:dP5}
\end{figure}
To reconstruct a toric intersection from the scaffolding, we proceed as in~\S\ref{sec:reconstruction}. We find~$c=1$, and 
\[
  \theta_1(x,y) = 1 + x + y.
\]
Thus~$|S_1| = 3$. Changing the constant term of~$f$ does not affect the Fano manifold that corresponds to~$X$ under mirror symmetry, so we consider
\[
  f = 1 + \textcolor{MidnightBlue}{\frac{(x+y+1)^2}{xy}}+\textcolor{BurntOrange}{(x+y+1)}
\]
This gives~$|S_0| = 2$, and the weight matrix~\eqref{eq:weight_matrix} as 
\[
  \begin{pmatrix}
    -2 & -1 & 1 & 1 & 1
  \end{pmatrix}
\]
In co-ordinates~$(x_1,x_2,x_3,x_4,x_5)$ with~$x_4 = x$ and~$x_5 = y$, we see that~$f$ homogenises to
\[
  1 + 
  \frac{(x_3 + x_4 + x_5)^2}{x_4 x_5} +
  \frac{x_3 + x_4 + x_5}{x_3}
\]
which is~\eqref{eq:pull_back_lift}. Thus the ray map~$\rho$ in~\eqref{eq:ray-sequence-plus-tower-with-splitting} has matrix
\[
  \begin{pmatrix}
     0 & -1 & 1 & 0 & 0 \\
    -1 &  0 & 0 & 1 & 0 \\
    -1 &  0 & 0 & 0 & 1
  \end{pmatrix}
\] 
and the weight matrix~$D$ for the toric variety~$Y$ is
\begin{equation}
  \label{eq:weight-matrix-for-example-1}
  \begin{pmatrix}
    1 & 0 & 0 & 1 & 1 \\
    0 & 1 & 1 & 0 & 0
  \end{pmatrix}
\end{equation}
We see that~$Y$ is~$\PP^2 \times \PP^1$, and that~$X$ is cut out of~$Y$ by a section of~$\sum_{j \in S_1} D_j = \oo(2,1)$.

Fix Cox co-ordinates~$(z_1,z_2,z_3,z_4,z_5)$ on~$Y$ compatible with~\eqref{eq:weight-matrix-for-example-1}, so that~$[z_1:z_4:z_5]$ are projective co-ordinates on~$\PP^2$ and~$[z_2:z_3]$ are projective co-ordinates on~$\PP^1$. Then~$X_P$ is cut out of~$Y$ by the binomial section 
\[
  z_1^2 z_2 = z_3 z_4 z_5
\] 
of~$\oo(2,1)$, by Remark~\ref{rem:binomial}, and this smooths to a general section
\[
  f_2(z_1,z_4,z_5) z_2 + g_2(z_1,z_4,z_5) z_3
\]
of~$\oo(2,1)$, where~$f_2$ and~$g_2$ are polynomials of degree two. Projecting to the first factor~$\PP^2$ of~$Y = \PP^2 \times \PP^1$ exhibits the hypersurface~$X \subset Y$ as the blow-up of~$\PP^2$ in four points.

\subsection{Wedge shapes} \label{sec:wedges}
Another scaffolding of the Laurent polynomial
\[
  f=x+y+\frac{x}{y}+\frac{2}{y}+\frac{1}{xy}+\frac{2}{x}+\frac{y}{x}
\]
from~\S\ref{sec:basic} is
\[
  f=\textcolor{BurntOrange}{\frac{(1+x)^2 + (1+x)y}{x}}+
  \textcolor{MidnightBlue}{\frac{(1+x)^2 + (1+x)y}{xy}}-3
\]
The Newton polytopes of~$f$ and these struts are pictured in Figure~\ref{fig:dP5wedge}.
\begin{figure}[ht]
\centering
\begin{tikzpicture}


\draw[black, very thick] (-6,-1) -- (-4,-1) -- (-4,0) -- (-5,1) -- (-6,1) -- cycle;

\filldraw[black] (-6,-1) circle (1.5pt) node[anchor=north east] {$1$};
\filldraw[black] (-5,-1) circle (1.5pt) node[anchor=north] {$2$};
\filldraw[black] (-4,-1) circle (1.5pt) node[anchor=north west] {$1$};
\filldraw[black] (-4,0) circle (1.5pt) node[anchor=west] {$1$};
\filldraw[black] (-5,1) circle (1.5pt) node[anchor=south] {$1$};
\filldraw[black] (-6,1) circle (1.5pt) node[anchor=south east] {$1$};
\filldraw[black] (-6,0) circle (1.5pt) node[anchor=east] {$2$};

\filldraw[color=MidnightBlue!60, fill=MidnightBlue!65, very thick] (-6+1/10,-1+1/10) -- (-4-2/10,-1+1/10) -- (-5-0.25/10,-1/20) -- (-6+1/10,-1/20) -- cycle;

\filldraw[color=BurntOrange!60, fill=BurntOrange!65, very thick] (-4-2/10,0+1/20) -- (-6+1/10,0+1/20) -- (-6+1/10,1-1.5/10 ) -- (-5-0.25/10,1-1.5/10) -- cycle;

\draw[black, thick] (-5.05,0.05) -- (-4.95,-0.05);
\draw[black, thick] (-5.05,-0.05) -- (-4.95,0.05);

\end{tikzpicture}
\caption{A scaffolding of~$f$ with shape~$\FF_1$.}
\label{fig:dP5wedge}
\end{figure}
Reconstructing toric complete intersection data as in~\S\ref{sec:reconstruction}, we find that~$c=2$, and 
\begin{align*}
  \theta_1(x,y) &= 1+x \\
  \theta_2(x,y) &= (1+x)^2 + (1+x) y
\end{align*}
Thus~$|S_1| = |S_2| = 2$. Changing the constant term of~$f$, as before, we consider
\[
  f = 2 + 
  \textcolor{BurntOrange}{\frac{(1+x)^2 + (1+x)y}{x}}+
  \textcolor{MidnightBlue}{\frac{(1+x)^2 + (1+x)y}{xy}}
\]
This gives~$|S_0| = 2$, and the weight matrix~\eqref{eq:weight_matrix} as 
\[
  \begin{pmatrix}
     0 &  0 & 1 & 1 & -2 & -1 \\
    -1 & -1 & 0 & 0 &  1 &  1
  \end{pmatrix}
\]
In co-ordinates~$(x_1,x_2,x_3,x_4,x_5,x_6)$ with~$x_4 = x$ and~$x_6=y$, we see that~$f$ homogenises to
\[
  2 + 
  \frac{(x_3+x_4)^2 x_5 + (x_3+x_4)x_6}{x_3 x_4 x_5} +
  \frac{(x_3+x_4)^2 x_5 + (x_3+x_4)x_6}{x_4 x_6}
\]
which is~\eqref{eq:pull_back_lift}. The ray map~$\rho$ in~\eqref{eq:ray-sequence-plus-tower-with-splitting} therefore has matrix
\[
  \begin{pmatrix}
    -1 &  0 & 1 & 0 & 0 & 0 \\
    -1 & -1 & 0 & 1 & 0 & 0 \\
    -1 &  0 & 0 & 0 & 1 & 0 \\
     0 & -1 & 0 & 0 & 0 & 1
  \end{pmatrix}
\] 
and the weight matrix~$D$ for the toric variety~$Y$ is
\begin{equation}
  \label{eq:weight-matrix-for-example-2}
  \begin{pmatrix}
    1 & 0 & 1 & 1 & 1 & 0 \\
    0 & 1 & 0 & 1 & 0 & 1
  \end{pmatrix}
\end{equation}
The line bundles here are
\begin{align*}
  L_1 = \sum_{j \in S_1} D_j = \oo(1,2) &&
  L_2 = \sum_{j \in S_2} D_j = \oo(1,1) 
\end{align*}
and~$X$ is cut out of the toric variety~$Y$ by a section of~$L_1 \oplus L_2$.

Note that~$L_2$ coincides with the toric divisor~$D_4$ given by the fourth column of the weight matrix~\eqref{eq:weight-matrix-for-example-2}. A general section~$s_2$ of~$L_2$, in Cox co-ordinates~$(z_1,z_2,z_3,z_4,z_5,z_6)$ on~$Y$ compatible with~\eqref{eq:weight-matrix-for-example-2}, is
\[
  z_1 z_2 + z_1 z_6 + z_2 z_1 + z_2 z_5 + z_3 z_6 + z_4 + z_5 z_6
\]
where we omit general coefficients from the equation. Thus we can solve~$s_2 = 0$ for~$z_4$, eliminating both the fourth column of~\eqref{eq:weight-matrix-for-example-2} and the line bundle~$L_2$, and recovering the weight matrix for~$\PP^1 \times \PP^2$ and the line bundle~$L_1 = \oo(1,2)$ as in the previous example. 

\subsection{Two rigid MMLPs with the same Newton polytope}\label{sec:two_rigid_MMLPs}
Consider the following Laurent polynomials:
\begin{align*}
f_1& =  x + yz + y + \frac{y}{x} + \frac{z}{x} + \frac{2}{x} + \frac{1}{xz} + \frac{z}{xy} + \frac{2}{xy} + \frac{1}{xyz}
\\
f_2& = x + yz + y + \frac{y}{x} + \frac{z}{x} + \frac{3}{x} + \frac{1}{xz} + \frac{z}{xy} + \frac{2}{xy} + \frac{1}{xyz}
\end{align*}
These have the same Newton polytope~$P$, pictured in Figure~\ref{fig:Newton-polytope-for-f1-f2} below; this is a canonical polytope, and is~$\GL(3,\ZZ)$-equivalent to the three-dimensional canonical polytope with ID~\href{http://grdb.co.uk/search/toricf3c?ID_cmp=in&ID=427129}{427129} in the the Graded Ring Database~\cite{GRDB-toric3}.

\begin{figure}[ht]
\centering
\begin{tikzpicture}[every node/.style={scale=0.8}]


\draw[gray!80, thick] (0,2) -- (-1,1/2);
\draw[gray!80, thick] (0,2) -- (3/2,-3/2);
\draw[gray!80, thick] (-1,0.5) -- (-1,-3/2);
\draw[gray!80, thick] (-1,0.5) -- (-5/2,0);
\draw[gray!80, thick] (-1,0.5) -- (3/2,-3/2);
\draw[gray!80, thick] (-5/2,0) -- (-1,-3/2);
\draw[gray!80, thick] (3/2,-3/2) -- (-1,-3/2);
\draw[gray!80, thick] (3/2,-3/2) -- (5/2,-2);
\draw[gray!80, thick] (-3/2,-5/2) -- (-1,-3/2);

\draw[black, very thick] (0,2) -- (-5/2,0);
\draw[black, very thick] (0,2) -- (-0.5,-3);
\draw[black, very thick] (0,2) -- (5/2,-2);
\draw[black, very thick] (-5/2,0) -- (-1/2,-3);
\draw[black, very thick] (-5/2,0) -- (-3/2,-5/2);
\draw[black, very thick] (-3/2,-5/2) -- (-1/2,-3);
\draw[black, very thick] (-1/2,-3) -- (5/2,-2);

\draw[black, thick] (0.05,0.05) -- (-0.05,-0.05);
\draw[black, thick] (0.05,-0.05) -- (-0.05,0.05);

\filldraw[black] (0,2) circle (1.5pt) node[anchor=south] {$(1,0,0)$};
\filldraw[black] (-1,1/2) circle (1pt) node[anchor=south] {$(0,1,0)$};
\filldraw[black] (0,0) circle (0.1pt) node[anchor=west] {$(0,0,0)$};
\filldraw[black] (-5/2,0) circle (1.5pt) node[anchor=east] {$(0,1,1)$};
\filldraw[black] (-1,-3/2) circle (1pt) node[anchor=south] {$(-1,1,0)$};
\filldraw[black] (3/2,-3/2) circle (1pt) node[anchor=south east] {$(-1,0,-1)$};
\filldraw[black] (5/2,-2) circle (1.5pt) node[anchor=north west] {$(-1,-1,-1)$};
\filldraw[black] (-1/2,-3) circle (1.5pt) node[anchor=north west] {$(-1,-1,1)$};
\filldraw[black] (-3/2,-5/2) circle (1.5pt) node[anchor=north east] {$(-1,0,1)$};

\filldraw[black] (0,-2) circle (1pt) node[anchor=south] {};
\filldraw[black] (1,-5/2) circle (1.5pt) node[anchor=south] {};

\end{tikzpicture}
\caption{The Newton polytope~$P$ for~$f_1$ and~$f_2$.}
\label{fig:Newton-polytope-for-f1-f2}
\end{figure}
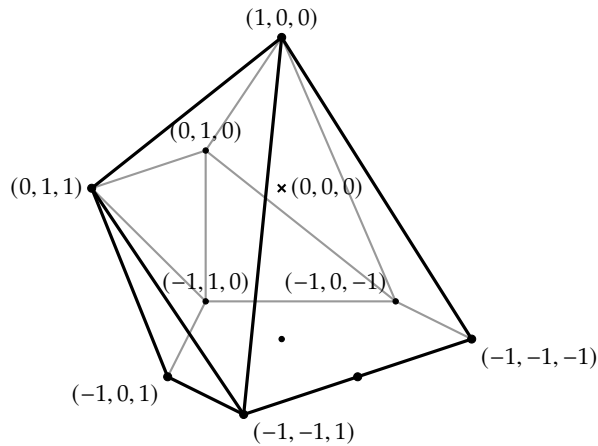

If~$X$ is a \QQFano{}~$3$\nobreakdash-fold that corresponds under mirror symmetry to either~$f_1$ or~$f_2$ then, as discussed, we expect that there is a qG\nobreakdash-degeneration with general fiber~$X$ and special fiber~$X_P$. In particular, therefore, the Hilbert series of~$X$ will coincide with the Hilbert series of~$X_P$. The singularities of any \QQFano{}~$3$\nobreakdash-fold are determined by its Hilbert series~\cite{AltinokBrownReid2002,BrownKasprzyk2022}, and in this case this suggests that~$X$ should have exactly two singularities, both of type~$\frac{1}{2}(1,1,1)$.

\begin{figure}[ht]
\centering
\begin{tikzpicture}

\filldraw[color=MidnightBlue!10, ultra thick, fill=MidnightBlue!50, ultra thick] (-1-3.5,-3/2) -- (-3.5,-2) -- (3/2-3.5,-3/2) -- cycle;
\filldraw[color=MidnightBlue!65, fill=MidnightBlue!65, ultra thick] (-5/2-3.5,0) -- (-1-3.5,1/2) -- cycle;
\filldraw[color=MidnightBlue!10, ultra thick, fill=MidnightBlue!50, ultra thick] (-0.5-3.5,-3) -- (5/2-3.5,-2) -- (-3.5,-2) -- (-3/2-3.5,-5/2) -- cycle;

\draw[gray!80, thick] (-3.5,2) -- (-1-3.5,1/2);
\draw[gray!80, thick] (-3.5,2) -- (3/2-3.5,-3/2);
\draw[gray!80, thick] (-4.5,0.5) -- (-1-3.5,-3/2);
\draw[gray!80, thin] (-4.5,0.5) -- (-5/2-3.5,0);
\draw[gray!80, thick] (-4.5,0.5) -- (3/2-3.5,-3/2);
\draw[gray!80, thick] (-5/2-3.5,0) -- (-1-3.5,-3/2);
\draw[gray!80, thick] (3/2-3.5,-3/2) -- (-1-3.5,-3/2);
\draw[gray!80, thick] (3/2-3.5,-3/2) -- (5/2-3.5,-2);
\draw[gray!80, thick] (-3/2-3.5,-5/2) -- (-1-3.5,-3/2);

\draw[black, very thick] (-5/2-3.5,0) -- (-1/2-3.5,-3);
\draw[black, very thick] (-5/2-3.5,0) -- (-3/2-3.5,-5/2);
\draw[black, very thick] (-3/2-3.5,-5/2) -- (-1/2-3.5,-3);
\draw[black, very thick] (-1/2-3.5,-3) -- (5/2-3.5,-2);
\draw[black, very thick] (-3.5,2) -- (-5/2-3.5,0);
\draw[black, very thick] (-3.5,2) -- (-0.5-3.5,-3);
\draw[black, very thick] (-3.5,2) -- (5/2-3.5,-2);

\draw[black, thick] (0.05-3.5,0.05) -- (-0.05-3.5,-0.05);
\draw[black, thick] (0.05-3.5,-0.05) -- (-0.05-3.5,0.05);

\filldraw[black] (0-3.5,2) circle (1.5pt) node[anchor=south west] {$1$};
\filldraw[black] (-1-3.5,1/2) circle (1pt) node[anchor=south] {$1$};
\filldraw[black] (-3.5,0) circle (0.1pt) node[anchor=west] {$0$};
\filldraw[black] (-5/2-3.5,0) circle (1.5pt) node[anchor=east] {$1$};
\filldraw[black] (-1-3.5,-3/2) circle (1pt) node[anchor=south west] {$1$};
\filldraw[black] (3/2-3.5,-3/2) circle (1pt) node[anchor=south west] {$1$};
\filldraw[black] (5/2-3.5,-2) circle (1.5pt) node[anchor=north west] {$1$};
\filldraw[black] (-1/2-3.5,-3) circle (1.5pt) node[anchor=north west] {$1$};
\filldraw[black] (-3/2-3.5,-5/2) circle (1.5pt) node[anchor=north east] {$1$};
\filldraw[black] (0-3.5,-2) circle (1pt) node[anchor=north west] {$2$};
\filldraw[black] (1-3.5,-5/2) circle (1.5pt) node[anchor=north west] {$2$};
\filldraw[color=MidnightBlue!65, fill=Black, ultra thick] (-3.5,2) circle (2.75pt);


\draw[color=BurntOrange, ultra thick] (-1+3.5,-3/2) -- (3.5,-2);
\draw[color=BurntOrange, ultra thick] (-5/2+3.5,0) -- (-1+3.5,1/2);
\filldraw[color=BurntOrange!15, ultra thick, fill=BurntOrange!70, ultra thick] (-0.5+3.5,-3) -- (5/2+3.5,-2) -- (+3.5+3/2,-3/2) -- (-3/2+3.5,-5/2) -- cycle;

\draw[gray!80, thick] (+3.5,2) -- (-1+3.5,1/2);
\draw[gray!80, thick] (+3.5,2) -- (3/2+3.5,-3/2);
\draw[gray!80, thick] (2.5,0.5) -- (-1+3.5,-3/2);
\draw[gray!80, thin] (2.5,0.5) -- (-5/2+3.5,0);
\draw[gray!80, thick] (2.5,0.5) -- (3/2+3.5,-3/2);
\draw[gray!80, thick] (-5/2+3.5,0) -- (-1+3.5,-3/2);
\draw[gray!80, thick] (3/2+3.5,-3/2) -- (-1+3.5,-3/2);
\draw[gray!80, thick] (3/2+3.5,-3/2) -- (5/2+3.5,-2);
\draw[gray!80, thick] (-3/2+3.5,-5/2) -- (-1+3.5,-3/2);

\draw[black, very thick] (+3.5,2) -- (-5/2+3.5,0);
\draw[black, very thick] (+3.5,2) -- (-0.5+3.5,-3);
\draw[black, very thick] (+3.5,2) -- (5/2+3.5,-2);
\draw[black, very thick] (-5/2+3.5,0) -- (-1/2+3.5,-3);
\draw[black, very thick] (-5/2+3.5,0) -- (-3/2+3.5,-5/2);
\draw[black, very thick] (-3/2+3.5,-5/2) -- (-1/2+3.5,-3);
\draw[black, very thick] (-1/2+3.5,-3) -- (5/2+3.5,-2);

\draw[black, thick] (0.05+3.5,0.05) -- (-0.05+3.5,-0.05);
\draw[black, thick] (0.05+3.5,-0.05) -- (-0.05+3.5,0.05);

\filldraw[black] (+3.5,2) circle (1.5pt) node[anchor=south west] {$1$};
\filldraw[black] (-1+3.5,1/2) circle (1pt) node[anchor=south] {$1$};
\filldraw[black] (+3.5,0) circle (0.1pt) node[anchor=west] {$0$};
\filldraw[black] (-5/2+3.5,0) circle (1.5pt) node[anchor=east] {$1$};
\filldraw[black] (-1+3.5,-3/2) circle (1pt) node[anchor=south west] {$1$};
\filldraw[black] (3/2+3.5,-3/2) circle (1pt) node[anchor=south west] {$1$};
\filldraw[black] (5/2+3.5,-2) circle (1.5pt) node[anchor=north west] {$1$};
\filldraw[black] (-1/2+3.5,-3) circle (1.5pt) node[anchor=north west] {$1$};
\filldraw[black] (-3/2+3.5,-5/2) circle (1.5pt) node[anchor=north east] {$1$};

\filldraw[black] (0+3.5,-2) circle (1pt) node[anchor=north west] {$3$};
\filldraw[black] (1+3.5,-5/2) circle (1.5pt) node[anchor=north west] {$2$};

\filldraw[color=BurntOrange!65, fill=Black, ultra thick] (3.5,2) circle (2.75pt);

\end{tikzpicture}
\caption{Scaffoldings of~$f_1$ and~$f_2$.}
\label{fig:scaffoldings}

\end{figure}
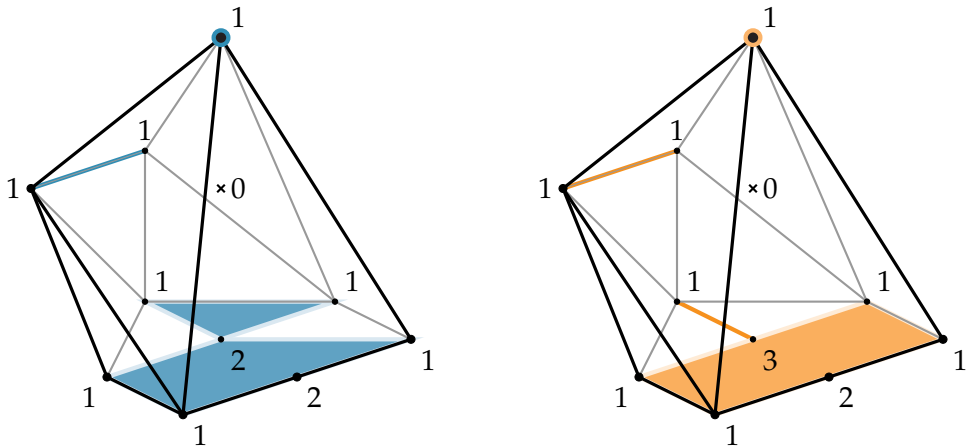

Consider the scaffoldings of~$f_1$ and~$f_2$ shown in Figure~\ref{fig:scaffoldings}; the scaffolding of~$f_1$ has shape~$\FF_1$ and the scaffolding of~$f_2$ has shape~$\PP^1 \times \PP^1$. In each case~$|S_0 \setminus B| = 1$ and the vertex of~$P$ corresponding to the element of~$S_0 \setminus B$ is indicated with a circle. As we will see, applying Laurent inversion to these scaffoldings produces toric complete intersections~$X_1 \subset Y_1$ and~$X_2 \subset Y_2$ such that~$Y_1$ and~$Y_2$ are toric orbifolds. Each of the varieties~$X_1$ and~$X_2$ is a \QQFano{} $3$\nobreakdash-fold with singular locus consisting of exactly two singularities of type~$\frac{1}{2}(1,1,1)$. The varieties~$X_1$ and~$X_2$ are not isomorphic (or even deformation equivalent) to each other, because they have distinct quantum periods.

\subsubsection{The scaffolding of~$f_1$}
Here
\[
  f_1 = (1+z^{-1}) y z + \frac{(1+z^{-1})(1+z^{-1}+y)z}{xy} + \frac{1+z^{-1}+y}{x} + x
\]
which, after the change of variables~$z \mapsto z^{-1}$, gives
\[
  \frac{(1+z) y}{z} + \frac{(1+z)(1+z+y)}{xyz} + \frac{1+z+y}{x} + x
\]
Following~\S\ref{sec:reconstruction} again, we find that~$c=2$ and 
\begin{align*}
  \theta_1(x,y,z) &= 1 + z \\
  \theta_2(x,y,z) &= 1 + z + y
\end{align*}
Thus~$|S_1| = |S_2| = 2$. Shifting the constant term, as before, we consider
\[
  2 + \frac{(1+z) y}{z} + \frac{(1+z)(1+z+y)}{xyz} + \frac{1+z+y}{x} + x
\]
This gives~$|S_0| = 4$, and the weight matrix~\eqref{eq:weight_matrix} as 
\[
  \begin{pmatrix}
    -1 & -1 &  0 & 0 & 1 & 1 & -1 & 0 \\
     0 & -1 & -1 & 0 & 0 & 0 &  1 & 1
  \end{pmatrix}
\]
In co-ordinates~$(x_1,x_2,\ldots,x_8)$ with~$x_4 = x$,~$x_6 = z$, and~$x_8=y$ we see that~$f$ homogenises to
\[
  2 + 
  \frac{(x_5+x_6) x_8}{x_5 x_6 x_7} + 
  \frac{(x_5+x_6)(x_5 x_7 + x_6 x_7 + x_8)}{x_4 x_6 x_8} + 
  \frac{x_5 x_7 + x_6 x_7 + x_8}{x_4 x_5 x_7} + 
  x_4
\]
which is~\eqref{eq:pull_back_lift}. Thus the ray map~$\rho$ in~\eqref{eq:ray-sequence-plus-tower-with-splitting} has matrix
\begin{equation}
  \label{eq:ray-matrix-for-example-3-1}  
  \begin{pmatrix}
    0 & -1 & -1 & 1 & 0 & 0 & 0 & 0 \\ 
    -1 &  0 & -1 & 0 & 1 & 0 & 0 & 0 \\ 
    -1 & -1 &  0 & 0 & 0 & 1 & 0 & 0 \\ 
    -1 &  0 & -1 & 0 & 0 & 0 & 1 & 0 \\
    1 & -1 &  0 & 0 & 0 & 0 & 0 & 1 
  \end{pmatrix}
\end{equation}
and the weight matrix~$D$ for the toric variety~$Y$ is
\begin{equation}
  \label{eq:weight-matrix-for-example-3-1}
  \begin{pmatrix}
    1 & 0 & 0 & 0 & 1 & 1 & 1 & -1 \\
    0 & 1 & 0 & 1 & 0 & 1 & 0 &  1 \\
    0 & 0 & 1 & 1 & 1 & 0 & 1 &  0
  \end{pmatrix}
\end{equation}
The line bundles here are 
\begin{align*}
  L = \sum_{j \in S_1} D_j = \oo(2,1,1) &&
  L' = \sum_{j \in S_2} D_j = \oo(0,1,1) 
\end{align*}
and our analysis suggests that we should consider a Fano variety~$X_1$ cut out of a toric variety~$Y_1$ with ray map~\eqref{eq:ray-matrix-for-example-3-1} by a section of~$L \oplus L'$. Since~$L'$ occurs as the toric divisor given by the fourth column of~\eqref{eq:weight-matrix-for-example-3-1}, we remove the fourth column and also~$L'$, considering instead the toric variety~$Y_1$ obtained as a GIT quotient~$\CC^7 \sslash (\Cstar)^3$ where the weight matrix for the action is
\begin{equation}
  \label{eq:smaller-weight-matrix-for-example-3-1}
  \begin{pmatrix}
    1 & 0 & 0 & 1 & 1 & 1 & -1 \\
    0 & 1 & 0 & 0 & 1 & 0 &  1 \\
    0 & 0 & 1 & 1 & 0 & 1 &  0
  \end{pmatrix}
\end{equation}
and the subvariety~$X_1 \subset Y_1$ cut out by a section of~$L \cong \oo(2,1,1)$.

To specify~$Y_1$, we need to choose a stability condition for the GIT quotient~$Y_1 = \CC^7 \sslash (\Cstar)^3$; equivalently, we need to choose the fan for~$Y$, and the weight matrix~\eqref{eq:smaller-weight-matrix-for-example-3-1} only determines the rays of this fan. For this we examine the secondary fan. The secondary fan is the cone spanned by the columns of~\eqref{eq:smaller-weight-matrix-for-example-3-1}, which we picture by intersecting it with the plane~$x+2y+z=1$ and projecting to the~$xy$-plane, equipped with the wall-and-chamber decomposition shown in Figure~\ref{fig:secondary-fan-example-3-1}. Choosing the stability condition that makes~$Y_1$ into a Fano toric variety, i.e.~${-K}_{Y_1} = \oo(3,3,3)$, which is shown as a hollow circle in Figure~\ref{fig:secondary-fan-example-3-1}, results in a non-$\QQ$-factorial toric variety. We instead choose the stability condition~${-K}_{Y_2} - L = \oo(1,2,2)$, which lies in the interior of the shaded chamber. This specifies a toric orbifold~$Y_1$, and ensures that the subvariety~$X_1 \subset Y_1$ cut out by a general section of~$L$ is Fano.

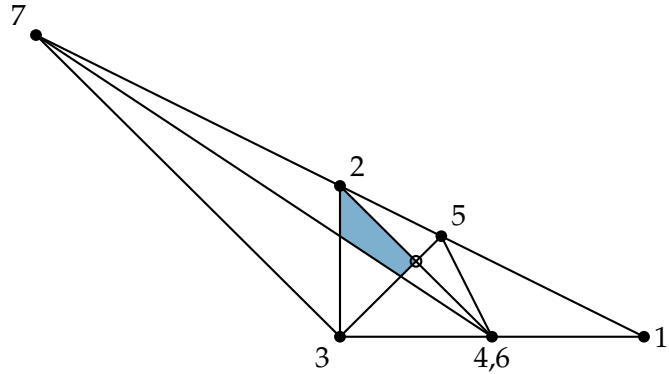
\begin{figure}[th]
\centering
\begin{tikzpicture}[scale=4.0]

\filldraw[MidnightBlue!40] (0,1/3) -- (0,1/2) -- (1/4,1/4) -- (1/5,1/5) -- cycle;

\draw[color=black, thick] (-1,1) -- (1,0) -- (0,0)--cycle;
\draw[color=black, thick] (-1,1) -- (1/2,0);
\draw[color=black, thick] (0,1/2) -- (1/2,0);
\draw[color=black, thick] (1/3,1/3) -- (1/2,0);
\draw[color=black, thick] (0,0) -- (0,1/2);
\draw[color=black, thick] (0,0) -- (1/3,1/3);

\filldraw (1,0) circle (0.5pt) node[anchor=west] {$1$};
\filldraw (0,1/2) circle (0.5pt) node[anchor=south west] {$2$};
\filldraw (0,0) circle (0.5pt) node[anchor=north east] {$3$};
\filldraw (1/2,0) circle (0.5pt) node[anchor=north] {$4$,$6$};
\filldraw (1/3,1/3) circle (0.5pt) node[anchor=south west] {$5$};
\filldraw (-1,1) circle (0.5pt) node[anchor=south east] {$7$};

\draw[color=black, thick] (1/4,1/4) circle (0.5pt) node {};
\end{tikzpicture}
\caption{A slice of the secondary fan of~$Y_1$. The shaded region is the stability chamber.}
\label{fig:secondary-fan-example-3-1}
\end{figure}

It remains to determine the singularities of a general section of~$L$. Fix Cox co-ordinates $(z_1,z_2,\ldots,z_7)$ on~$Y_2$ compatible with~\eqref{eq:smaller-weight-matrix-for-example-3-1}. There are precisely twelve maximal charts on~$Y_1$, each of the form~$U_{ijk}\coloneqq\{z_i=z_j=z_k=1\}$ where the cone~$\langle ijk \rangle$ contains the shaded chamber in its strict interior. Only three of them are singular:~$U_{357}$,~$U_{457}$ and~$U_{567}$. A general section of~$L$ is of the form
 \[  
  z_1^3 z_3 z_7 + z_4 z_5 + z_1 z_2 z_4 + z_5 z_6 + z_1^2 z_4 z_7 + z_1 z_3 z_5 + z_1 z_2 z_6 + z_1^2 z_2 z_3 + z_1^2 z_6 z_7
\] 
where as usual we omit generic coefficients from the equation. We see that:
\begin{itemize}
  \item The singular locus of~$Y_1$ consists of the origins of the three singular charts and the curve~$C=\{z_1=z_2=z_3=0\}$. 
  \item A general section of~$L$ does not pass through the origins of~$U_{457}$ and~$U_{567}$, but does pass through the origin of~$U_{357}$. This gives rise to a singularity on~$X_1$ of type~$\frac{1}{2}(1,1,1)$: the chart on~$Y_1$ is~$\frac{1}{2}(1,1,1,1)_{1246}$ and~$L=\oo(1)$ here.
  \item A general section of~$L$ meets~$C$ in an isolated point of type~$\frac{1}{2}(1,1,1)$.
\end{itemize}
Thus a general section of~$L$ is singular in precisely two points, and each is of type~$\frac{1}{2}(1,1,1)$.

\subsubsection{The scaffolding of~$f_2$}
This is
\[
  f_2 = (1+z) y + \frac{(1+z)^2(1+y)}{xyz} + \frac{1+y}{x} + x
\]
which gives~$c=2$ and 
\begin{align*}
  \theta_1(x,y,z) &= 1 + z \\
  \theta_2(x,y,z) &= 1 + y
\end{align*}
Thus~$|S_1| = |S_2| = 2$. Shifting the constant term again, we consider
\[
  2 + (1+z) y + \frac{(1+z)^2(1+y)}{xyz} + \frac{1+y}{x} + x
\]
This gives~$|S_0| = 4$, and the weight matrix~\eqref{eq:weight_matrix} as 
\[
  \begin{pmatrix}
    -1 & -2 &  0 & 0 & 1 & 1 & 0 & 0 \\
     0 & -1 & -1 & 0 & 0 & 0 & 1 & 1
  \end{pmatrix}
\]
In co-ordinates~$(x_1,x_2,\ldots,x_8)$ with~$x_4 = x$,~$x_6 = z$, and~$x_8=y$ we see that~$f$ homogenises to
\[
  2 + 
  \frac{(x_5+x_6) x_8}{x_5 x_7} + 
  \frac{(x_5+x_6)^2(x_7+x_8)}{x_4 x_5 x_6 x_8} + 
  \frac{x_7+x_8}{x_4 x_7} + 
  x_4
\]
which is~\eqref{eq:pull_back_lift}. Thus the ray map~$\rho$ in~\eqref{eq:ray-sequence-plus-tower-with-splitting} has matrix
\[
  \begin{pmatrix}
     0 & -1 & -1 & 1 & 0 & 0 & 0 & 0 \\ 
    -1 & -1 &  0 & 0 & 1 & 0 & 0 & 0 \\ 
     0 & -1 &  0 & 0 & 0 & 1 & 0 & 0 \\ 
    -1 &  0 & -1 & 0 & 0 & 0 & 1 & 0 \\
     1 & -1 &  0 & 0 & 0 & 0 & 0 & 1 
  \end{pmatrix}
\]
and the weight matrix~$D$ for the toric variety~$Y$ is
\begin{equation}
  \label{eq:weight-matrix-for-example-3-2}
  \begin{pmatrix}
    1 & 0 & 0 & 0 & 1 & 0 & 1 & -1 \\
    0 & 1 & 0 & 1 & 1 & 1 & 0 &  1 \\
    0 & 0 & 1 & 1 & 0 & 0 & 1 &  0
  \end{pmatrix}
\end{equation}
The line bundles are 
\begin{align*}
  L = \sum_{j \in S_1} D_j = \oo(1,2,0) &&
  L' = \sum_{j \in S_2} D_j = \oo(0,1,1) 
\end{align*}
Once again we remove both the fourth column of~\eqref{eq:weight-matrix-for-example-3-2} and~$L'$, and consider the toric variety~$Y_2$ obtained as a GIT quotient~$\CC^7 \sslash (\Cstar)^3$ where the weight matrix for the action is
\begin{equation}
  \label{eq:smaller-weight-matrix-for-example-3-2}
  \begin{pmatrix}
    1 & 0 & 0 & 1 & 0 & 1 & -1 \\
    0 & 1 & 0 & 1 & 1 & 0 &  1 \\
    0 & 0 & 1 & 0 & 0 & 1 &  0
  \end{pmatrix}
\end{equation}
and the subvariety~$X_2 \subset Y_2$ cut out by a section of~$L \cong \oo(1,2,0)$.

To specify the stability condition for the GIT quotient~$Y_2 = \CC^7 \sslash (\Cstar)^3$, we examine the secondary fan. This is the cone spanned by the columns of~\eqref{eq:smaller-weight-matrix-for-example-3-2}, which we again picture by intersecting it with the plane~$x+2y+z=1$ and projecting to the~$xy$-plane, equipped with the wall-and-chamber decomposition shown in Figure~\ref{fig:secondary-fan-example-3-2}. Choosing the stability condition that makes~$Y_2$ into a Fano toric variety, i.e.~${-K}_{Y_2} = \oo(2,4,2)$, which is shown as a hollow circle in Figure~\ref{fig:secondary-fan-example-3-2}, again results in a toric variety that is not~$\QQ$-factorial. We instead choose the stability condition~${-K}_{Y_1} - L = \oo(1,2,2)$, which lies in the interior of the shaded chamber. This specifies a toric orbifold~$Y_2$, and ensures that the subvariety~$X_2 \subset Y_2$ cut out by a general section of~$L$ is Fano.

\begin{figure}[th]
\centering
\begin{tikzpicture}[scale=4.0]

\filldraw[BurntOrange!45] (0,1/3) -- (0,1/2) -- (1/4,1/4) -- (1/5,1/5) -- cycle;

\draw[color=black, thick] (-1,1) -- (1,0) -- (0,0)--cycle;
\draw[color=black, thick] (-1,1) -- (1/2,0);
\draw[color=black, thick] (0,1/2) -- (1/2,0);
\draw[color=black, thick] (1/3,1/3) -- (1/2,0);
\draw[color=black, thick] (0,0) -- (0,1/2);
\draw[color=black, thick] (0,0) -- (1/3,1/3);

\filldraw (1,0) circle (0.5pt) node[anchor=west] {$1$};
\filldraw (0,1/2) circle (0.5pt) node[anchor=south west] {$2$,$5$};
\filldraw (0,0) circle (0.5pt) node[anchor=north east] {$3$};
\filldraw (1/3,1/3) circle (0.5pt) node[anchor=south west] {$4$};
\filldraw (1/2,0) circle (0.5pt) node[anchor=north] {$6$};
\filldraw (-1,1) circle (0.5pt) node[anchor=south east] {$7$};

\draw[color=black, thick] (1/6,1/3) circle (0.5pt) node {};
\end{tikzpicture}
\caption{A slice of the secondary fan of~$Y_2$. The shaded region is the stability chamber.}
\label{fig:secondary-fan-example-3-2}
\end{figure}
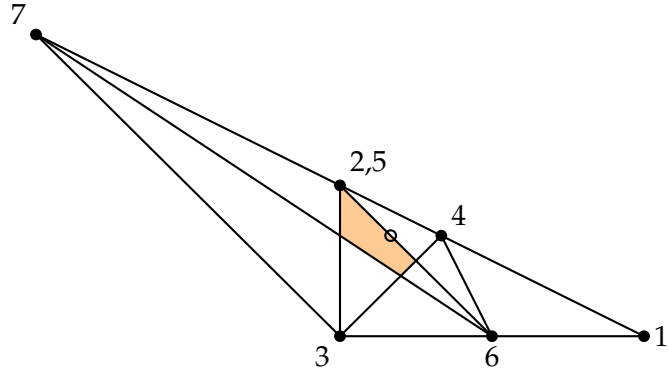

Once again we analyse the singularities of a general section of~$L$, working in Cox co-ordinates~$(z_1,z_2,\ldots,z_7)$ compatible with~\eqref{eq:smaller-weight-matrix-for-example-3-2}. Again there are exactly twelve maximal charts on~$Y_2$, each of the form~$U_{ijk}\coloneqq\{z_i=z_j=z_k=1\}$ where the cone~$\langle ijk \rangle$ contains the shaded chamber in its strict interior. Only two of them are singular:~$U_{347}$ and~$U_{467}$. A general section of~$L$ is of the form
 \[  
  z_1 z_5^2 + z_4 z_5 + z_1^3 z_7^2 + z_1^2 z_2 z_7 + z_1^2 z_5 z_7 + z_1 z_2^2 + z_1 z_4 z_7 + z_1 z_2 z_5 + z_2 z_4
\]
and hence passes through the origins of both singular charts. This gives rise to two singularities on~$X_2$ of type~$\frac{1}{2}(1,1,1)$: the charts on~$Y_2$ are~$\frac{1}{2}(1,1,1,1)_{1256}$ and~$\frac{1}{2}(1,1,1,1)_{1235}$, and in each chart~$L = \oo(1)$. Thus again a general section of~$L$ is singular in precisely two points, each of type~$\frac{1}{2}(1,1,1)$.

\subsubsection*{The varieties~$X_1$ and~$X_2$ are distinct}
To compute the regularised quantum periods of~$X_1$ and~$X_2$, we argue as in~\cite[Corollary~D.5]{CoatesCortiGalkinKasprzyk2016}, but using the mirror theorem for toric complete intersections due to J.~Wang~\cite{Wang2019} in place of the Quantum Lefschetz theorem and Givental's mirror theorem for toric manifolds. This yields
\begin{align*}  
  \widehat{G}_{X_1}(t) & = 1 + 4 t^2 + 18 t^3 + 60 t^4 + 600 t^5 + 2470 t^6 + 18900 t^7 + 118300 t^8 + 723240 t^9 + \cdots \\
  \widehat{G}_{X_2}(t) & = 1 + 6 t^2 + 18 t^3 + 90 t^4 + 780 t^5 + 3210 t^6 + 28560 t^7 + 164010 t^8 + 1146600 t^9 + \cdots
\end{align*}
Thus~$X_1$ and~$X_2$ are not deformation equivalent. Note also that, as the singularities of~$X_1$ and~$X_2$ are isolated, they are rigid~\cite{Schlessinger1968}, and therefore we cannot smooth~$X_1$ or~$X_2$ (or~$X_P$) further in their deformation-equivalence classes.
\section{Systematic generation of hypersurface examples}\label{sec:random_ci}
One can explore the landscape of Fano manifolds by systematically generating complete intersection models~\cite{CoatesKasprzykPrince2015}, but this approach is much less effective in the \QQFano{} (orbifold) setting. That is because, unless the ambient space is a weighted projective space~\cite{IanoFletcher2000}, we lack combinatorial criteria to detect whether a complete intersection is quasismooth, and thus checking quasismoothness involves computationally expensive Gr\"obner basis calculations. It turns out, however, that we can restore the effectiveness of this method in the \QQFano{} setting by combining the systematic generation of toric complete intersections with an analysis of their Laurent polynomial mirrors. By restricting attention to those toric complete intersections such that the Givental/Hori--Vafa mirror is a maximally mutable Laurent polynomial, one can sidestep many expensive quasismoothness checks that we expect, in general, will fail. In this section we use this approach to generate \QQFano{} threefolds that are hypersurfaces in toric orbifolds. One should regard this as a proof of concept: the methods apply without significant change to complete intersections in higher-dimensional toric varieties as well.

We randomly generated \QQFano{} threefolds that occur as toric hypersurfaces, by taking the following steps.
\begin{enumerate}
    \item \label{item:step_1} We generated~$2 \times 6$ integer matrices~$W$ and length-$2$ integer vectors~$D$ with small non-negative integer entries. Specifically, we chose entries in~$W$ and~$D$ uniformly at random from the set~$\{0,1,\ldots,6\}$.
    \item \label{item:step_2} We discarded~$(W,D)$ unless:
    \begin{enumerate}
        \item  the four-dimensional Fano toric variety~$Y$ with weight matrix~$W$ was~$\QQ$-factorial;
        \item the line bundle~$L \to Y$ defined by the weight vector~$D$ was nef; and 
        \item\label{item:step_2_c} ${-K}_Y - L$ was ample on~$Y$.
    \end{enumerate}
    These are combinatorial conditions on the entries of~$W$ and~$L$. Condition~\eqref{item:step_2_c} here guarantees that the threefold cut out by a general section of~$L$ is Fano.
    \item \label{item:step_3} We discarded~$(W,D)$ unless:
    \begin{enumerate}
        \item $D$ did not occur as a column of~$W$.
        \item the \QQFano{} threefold~$X$ cut out by a general section of~$L$ had a truncated period sequence~$c_0,c_1,\ldots, c_N$ such that
        \[
            \gcd \{ d : c_d \ne 0 \} = 1
        \]
        That is, it did not have the pattern of zeroes characteristic of Fano varieties with Fano index greater than one.
        \item\label{item:subclause_c} the weight matrix and the divisor satisfied the conditions for the Givental/Hori--Vafa method to give a Laurent polynomial~$f$ mirror to~$X$~\cite[\S5]{CoatesKasprzykPrince2015}.
        \item\label{item:subclause_d} the Hilbert series of~$X$ was present in the database of possible Hilbert series of semistable \QQFano{} threefolds~\cite{GRDB-fano3}. Note that such a Hilbert series uniquely determines a set of singularities, called the basket, such that any \QQFano{} threefold with that Hilbert series has singular set equal to the basket~\cite{AltinokBrownReid2002,BrownKasprzyk2022}.
        \item the weight matrix~$W$ satisfied certain divisibility conditions that are necessary if the toric variety~$Y$ is to contain a quasismooth hypersurface with singular set equal to the basket from~\eqref{item:subclause_d}.
    \end{enumerate}
    \item \label{item:step_4} We discarded~$(W,D)$ unless the Laurent polynomial mirror~$f$ from~\eqref{item:subclause_c} was rigid maximally mutable.
    \item \label{item:step_5} We discarded~$(W,D)$ unless the hypersurface~$X$ cut out by a randomly chosen section of~$L$ was quasismooth with isolated singularities.
\end{enumerate}
From just under~$5\,000\,000$ examples after step~\ref{item:step_1}, we found~$160\,762$ examples after step~\ref{item:step_2}, then~$7\,272$ examples after step~\ref{item:step_3}, then~$354$ examples after step~\ref{item:step_4}, and~$333$ examples after step~\ref{item:step_5}. 

Note that a single \QQFano{} threefold can correspond to many different Laurent polynomials. But conjecturally these Laurent polynomials are all related by mutation, and in particular have the same period sequence; indeed we expect that a \QQFano{} threefold is uniquely determined by its period sequence. The~$333$ Laurent polynomials above gave rise to~$130$ distinct period sequences. Of these, 32 were not among the period sequences of rigid MMLPs with canonical Newton polytope (as pictured in Figure~\ref{fig:grdb_heatmaps_b}), and~$15$ occurred among the examples constructed using Laurent inversion in~\cite{Heuberger2022}. The position of the~$130$ period sequences in the landscape of \QQFano{} threefolds is indicated in Figure~\ref{fig:grdb_scattergun_zoom}; see also Figure~\ref{fig:grdb_heatmaps_e} above.

\begin{figure}[htbp]
    \centering
    \includegraphics[width=0.7\textwidth]{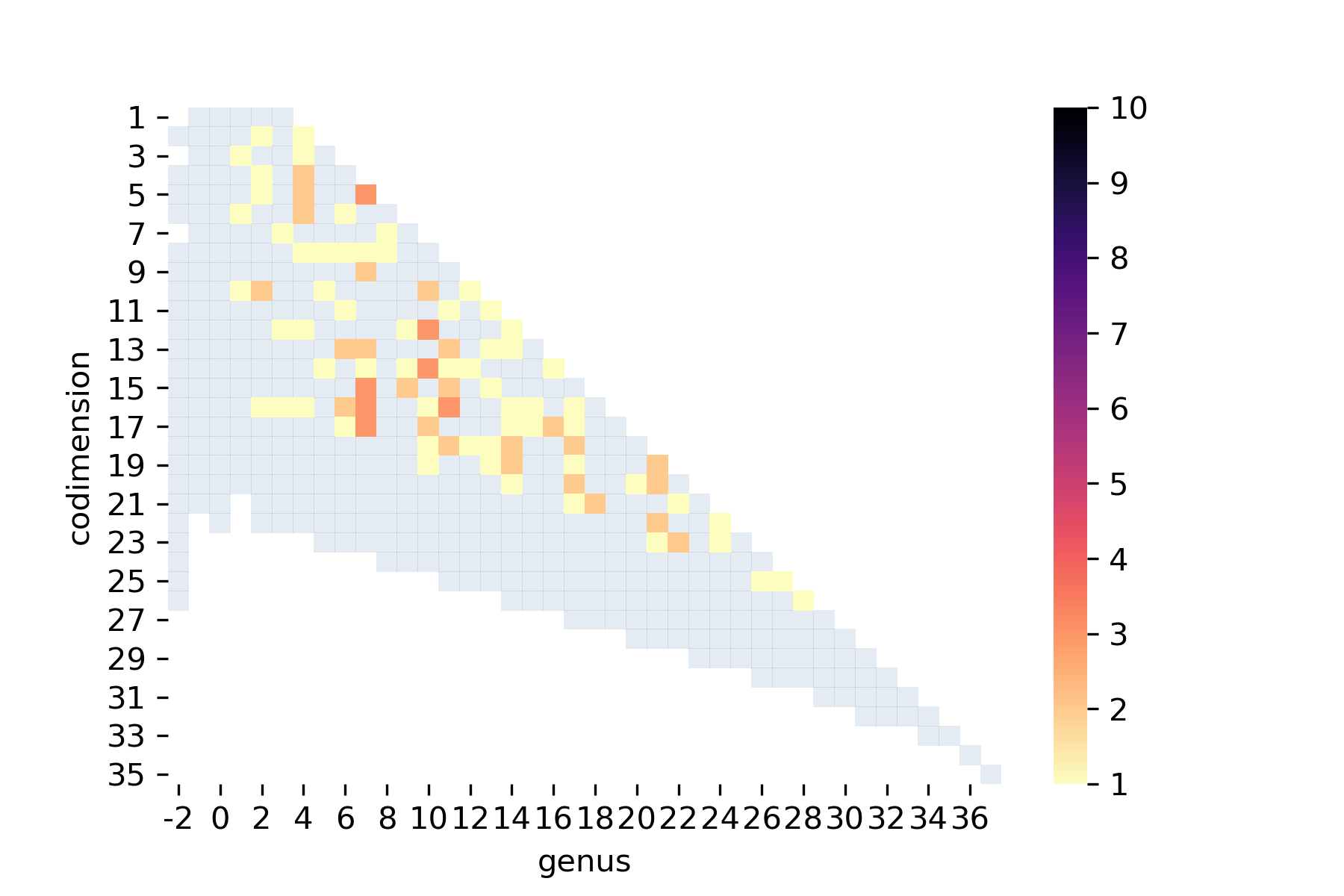}
    \caption{The distribution of Hilbert series for randomly generated {\protect \QQFano} quasismooth threefold hypersurfaces in {\protect $\mathbb{Q}$-factorial} toric varieties of Picard rank~$2$. Hilbert series are recorded as pairs~$(c, g)$ where~$c$ is the estimated codimension and~$g$ is the genus.}
    \label{fig:grdb_scattergun_zoom}
\end{figure}

\section{Towards a Classification Theorem}\label{sec:future}
If Conjecture~\ref{conj:mirror} holds, along with the surrounding conjectural picture discussed in~\S\ref{sec:introduction}, then the classification of \QQFano{} threefolds would follow from understanding:
\begin{enumerate}
    \item[(1)] \hypertarget{item:towards_1}{} Given a rigid MMLP, how can we construct the corresponding Fano variety?
    \item[(2)] \hypertarget{item:towards_2}{}How can we find a representative rigid MMLP for every \QQFano{} threefold?
\end{enumerate}
Let us make the latter question more precise:
\begin{itemize}
    \item[(2$'$)] \hypertarget{item:towards_2_prime}{} Given a finite set of deformation-equivalence classes of \QQFano{} threefolds, how can we find a representative rigid MMLP for every Fano variety in this set?
\end{itemize}
One natural way to create the finite sets in (\hyperlink{item:towards_2_prime}{2$'$}) is by bounding the complexity of the singularities allowed~\cite{Birkar2021}.
\begin{itemize}
    \item[(2$''$)] \hypertarget{item:towards_2_double_prime}{} Given a bound on the complexity of the singularities of a \QQFano{} threefold, how can we find a representative rigid MMLP for every Fano variety that satisfies this bound?
\end{itemize}

\subsection{Towards answering question 1}
Laurent inversion is a powerful tool for addressing question \hyperlink{item:towards_1}{1}. But it is clearly not enough, because not every \QQFano{} threefold is a toric complete intersection. There has been some progress in constructing \QQFano{} threefolds in Pfaffian format from scaffoldings with shapes based on the toric surface~$\mathrm{dP}_7$~\cite{Heuberger2022}. 
\begin{description}
    \item[Problem A] Generalise Laurent inversion to Fano varieties presented in Pfaffian format. 
\end{description}
Furthermore every smooth Fano threefold is a~\emph{quiver flag zero locus}, that is, a zero locus of a section of a homogeneous vector bundle over a GIT quotient of a vector space by a product of general linear groups~\cite{CoatesCortiGalkinKasprzyk2016,Kalashnikov2019}. Many toric complete intersections are also quiver flag zero loci, and generalising Laurent inversion to quiver flag zero loci would be an important source of new constructions. 
\begin{description}
    \item[Problem B] Generalise Laurent inversion to quiver flag zero loci. 
\end{description}
Note that recent work of Webb allows the analysis of quiver flag zero loci that are orbifolds~\cite{Webb2021}. Kalashnikov has produced~$99$ rigid MMLPs in four variables that are conjectural mirrors to quiver flag zero loci~\cite{Kalashnikov2019b}; this should be an important source of test cases.

An additional challenge is that, as things stand, applying Laurent inversion requires substantial ingenuity, particularly in the construction of scaffoldings. There are many deformation classes of \QQFano{} threefolds, far more than it would be practical to construct by hand. 
\begin{description}
    \item[Problem C] Develop effective algorithms to automate Laurent inversion. 
\end{description} 
In order to construct the classification, and even to explore it at scale, we will need to use Laurent inversion as part of computer algebra calculations.

One of the most effective tools for constructing \QQFano{} threefolds in low codimension is Tom and Jerry~\cite{BrownKerberReid2012,BrownReidStevens2021}. Relating this technique to the analysis of rigid MMLPs would potentially be very powerful, in particular because constructing Laurent inversion models in low codimension seems to be difficult.
\begin{description}
    \item[Problem D] Understand the relationship between Tom and Jerry, or more generally projection and unprojection, and mirror symmetry.
\end{description}

\subsection{Towards answering question 2} 
In order to approach question~\hyperlink{item:towards_2}{2}, we need to understand how to determine geometric properties of a Fano variety~$X$ from a Laurent polynomial~$f$ that corresponds to~$X$ under mirror symmetry, or from the Newton polytope~$P = \Newt{f}$. In particular, to answer question \hyperlink{item:towards_2_double_prime}{2$''$}, we need to understand how to determine the singularities of~$X$ from~$f$ or from~$P$. This will have two consequences for our search:
\begin{itemize}
    \item given a polytope~$P$, it will help us to predict a \QQFano{} threefold to which~$X_P$ deforms;
    \item given a basket of singularities~$\cB$, it will help us to bound the class of polytopes~$P$ that we need to analyse in order to find rigid MMLP representatives for all \QQFano{} threefolds with that basket. 
\end{itemize}
In both cases the polytope~$P$ occurs as the Newton polytope of a rigid MMLP~$f$ that corresponds to~$X$.

In two dimensions we have good control over question~\hyperlink{item:towards_2_double_prime}{2$''$}, through the notion of~\emph{singularity content}~\cite{AkhtarKasprzyk2014}. A Fano polygon~$P$ determines a collection of singularities~$\cB$, again called the~\emph{basket}, with the property that a general qG partial smoothing of~$X_P$ has singularities given by~$\cB$. The basket~$\cB$ is given combinatorially as follows. Each edge~$E$ of~$P$ lies at some lattice height~$r$ above the origin, and we subdivide~$E$ into a number of line segments of length~$r$, plus at most one line segment of length less than~$r$. Making such a choice for each edge~$E$ defines a fan~$\Sigma$, which gives a crepant partial resolution~$X_\Sigma$ of~$X_P$. The line segments of length equal to their lattice height define~$T$-singularities~\cite{KollarShepherdBarron1988} on~$X_\Sigma$; the remaining singularities on~$X_\Sigma$ are qG\nobreakdash-rigid, and define the basket~$\cB$. It is not clear~\emph{a priori} that the basket~$\cB$ is independent of choices made, but this turns out to be the case. 

As indicated, in two dimensions singularity content plays two roles:
\begin{itemize}
    \item given a polygon~$P$, it determines the singularities on a orbifold del~Pezzo surface to which~$X_P$ deforms;
    \item given a basket of singularities~$\cB$, it determines the class of polygons that we need to analyse in order to find rigid MMLP representatives for all orbifold del~Pezzo surfaces with basket~$\cB$. This is the class of lattice polygons with singularity content~$\cB$, under the equivalence relation given by combinatorial mutation~\cite{AkhtarCoatesGalkinKasprzyk2012}.
\end{itemize}
This approach allows us to classify orbifold TG del~Pezzo surfaces with a given basket~\cite{AkhtarCoatesCortiHeubergerKasprzykOnetoPetracciPrinceTveiten2016,CortiHeuberger2017,KasprzykNillPrince2017,CaveyKutas2017,CaveyPrince2020,Cuzzucoli2020}. Note that singularity content is defined in terms of the polygon~$P$, rather than than a MMLP~$f$ with Newton polygon~$P$. This is consistent with  the fact that in two dimensions there is a unique family of maximally mutable Laurent polynomials with a given Newton polygon~\cite{CoatesKasprzykPittonTveiten2021}. As we argue below, to obtain a notion of singularity content in higher dimensions we expect that it will be essential to work with~$f$ rather than~$P$.

\subsection{Our pictures of the landscape are unsatisfactory}
Recall that Figure~\ref{fig:grdb_heatmaps_b} was produced by analysing a collection of rigid MMLPs~\cite{Zenodo-certain-MMLP}. We believe that this collection of Laurent polynomials contains almost all\footnote{The algorithm that we use has impractically long runtime on several hundred of the~$674\,688$ three-dimensional canonical polytopes.} rigid MMLPs~$f$ in three variables such that~$\Newt{f}$ is canonical, but that was not so important for the discussion in this paper. Indeed the classes of polytopes that we considered when producing the pictures of the \QQFano{} landscape in Figure~\ref{fig:grdb_heatmaps}, although natural from the point of view of combinatorics, are not well-adapted to the \QQFano{} classification problem. 
Ideally we would search over a classification of three-dimensional lattice polytopes with fixed singularity content, up to the equivalence given by combinatorial mutation. Fixing the singularity content here would correspond to bounding the complexity of the singularities in the corresponding \QQFano{} threefolds. In order to do this, however, we would need an appropriate definition of singularity content for three-dimensional polytopes. This does not yet exist, and so for Figure~\ref{fig:grdb_heatmaps} we had to work with the three-dimensional polytope classifications that are available. 

\subsection{Towards singularity content in higher dimensions}
As discussed, singularity content in dimension two is a basket of singularities, which is computed from a polygon~$P$ by a crepant resolution procedure. In higher dimensions the situation is more complicated. For example, in dimension three there are global obstructions to smoothability even when all local obstructions vanish~\cite{Petracci2020}, but this is not the case in two dimensions~\cite{AkhtarCoatesCortiHeubergerKasprzykOnetoPetracciPrinceTveiten2016}. Furthermore, in three dimensions~$X_P$ can admit many qG\nobreakdash-generizations that are not deformation equivalent; this is not the case in two dimensions. Thus any notion of singularity content in higher dimensions must be richer than just a basket of singularities. 
But nonetheless there are hints that a similar crepant partial resolution procedure might produce the basket for~$X$, together with some extra structure (a triangulation), from its Laurent polynomial mirror~$f$. For example, consider the three-dimensional canonical polytope~$P$ shown in Figure~\ref{fig:singularity_content}. This supports two distinct rigid MMLPs
\begin{align*}
    f_1 &= x^4 y^2 z^3 + 2 x^2 y z^2 + 3 x + y + z + \frac{2}{x y z} + \frac{3}{x^2 y^2 z^3} + \frac{1}{x^5 y^4 z^6} \\
    f_2 &= x^4 y^2 z^3 + 2 x^2 y z^2 + 3 x + y + z + \frac{3}{x y z} + \frac{3}{x^2 y^2 z^3} + \frac{1}{x^5 y^4 z^6} 
\end{align*}
which differ only in the coefficient of~$x^{-1} y^{-1} z^{-1}$. The period sequences for~$f_1$ and~$f_2$ are distinct, so we expect that there are two deformation families of \QQFano{} threefold that qG\nobreakdash-degenerate to~$X_P$. One of these families can be constructed by applying Laurent inversion to the scaffolding shown in Figure~\ref{fig:future-scaffolding}, which is mutation-equivalent to~$f_2$; we do not know how to construct the other deformation family.

\begin{figure}[htb]
    \centering
    \resizebox*{0.49\linewidth}{!}{
        \begin{tikzpicture}[scale=0.7]
        
        \draw[black, very thick] (-1,5) -- (1,-5);
        \draw[black, very thick] (-1,5) -- (-5,-2);
        \draw[black, very thick] (-1,5) -- (7,1);
        \draw[black, very thick] (-5,-2) -- (1,-5);
        \draw[black, very thick] (7,1) -- (1,-5);
        \draw[gray!80, thick] (7,1) -- (-5,-2);

        \filldraw[black] (-1,5) circle (1.5pt) node[anchor=south] {$(0,0,1)$};
        \filldraw[black] (1,-5) circle (1.5pt) node[anchor=north] {$(4,2,3)$};
        \filldraw[black] (-5,-2) circle (1.5pt) node[anchor=east] {$(-5,-4,-6)$};
        \filldraw[black] (7,1) circle (1.5pt) node[anchor=west] {$(0,1,0)$};
        
        \filldraw[black] (0,0) circle (1.5pt) node[anchor=west] {$(2,1,2)$};
        
        \filldraw[black] (-3,-3) circle (1.5pt) node[anchor=north east] {$(-2,-2,-3)$};
        \filldraw[black] (-1,-4) circle (1.5pt) node[anchor=north east] {$(1,0,0)$};
        
        \filldraw[black] (-2,1) circle (1.5pt) node[anchor=north] {$(-1,-1,-1)$};
        
        \filldraw[black] (3,1) circle (1.5pt) node[anchor=north] {$(1,1,1)$};
        
        \end{tikzpicture}
    }
    \resizebox*{0.49\linewidth}{!}{
        \begin{tikzpicture}[scale=0.7]
        
        \draw[black, very thick] (-1,5) -- (1,-5);
        \draw[black, very thick] (-1,5) -- (-5,-2);
        \draw[black, very thick] (-1,5) -- (7,1);
        \draw[black, very thick] (-5,-2) -- (1,-5);
        \draw[black, very thick] (7,1) -- (1,-5);
        \draw[gray!80, thick] (7,1) -- (-5,-2);

        \filldraw[black] (-1,5) circle (1.5pt) node[anchor=south] {\phantom{$(0,0,1)$}};
        \filldraw[black] (1,-5) circle (1.5pt) node[anchor=north] {\phantom{$(4,2,3)$}};
        \filldraw[black] (-5,-2) circle (1.5pt) node[anchor=east] {\phantom{$(-5,-4,-6)$}};
        \filldraw[black] (7,1) circle (1.5pt) node[anchor=west] {\phantom{$(0,1,0)$}};
        
        \filldraw[black] (0,0) circle (1.5pt) node {};
        
        \filldraw[black] (-3,-3) circle (1.5pt) node {};
        \filldraw[black] (-1,-4) circle (1.5pt) node {};
        
        \filldraw[black] (-2,1) circle (1.5pt) node {};
        
        \filldraw[black] (3,1) circle (1.5pt) node {};
        
        \draw[blue, very thick] (-1,5) -- (3,1);
        \draw[blue, very thick] (0,0) -- (3,1);
        \draw[blue, very thick] (1,-5) -- (3,1);
        \draw[blue, very thick] (3,1) -- (7,1);

        \draw[blue, very thick] (0,0) -- (-1,-4);
        \draw[blue, very thick] (0,0) -- (-2,1);
        \draw[blue, very thick] (-2,1) -- (-3,-3);
        \draw[blue, very thick] (-2,1) -- (-1,-4);
        \draw[blue, very thick] (-2,1) -- (-5,-2);
        \draw[blue, very thick] (-2,1) -- (-1,5);

        \draw[blue!60, thick] (7,1) -- (-3,-3);
        \draw[blue!60, thick] (7,1) -- (-1,-4);

        \draw[black, very thick] (-1,5) -- (1,-5);
        \draw[black, very thick] (-1,5) -- (-5,-2);
        \draw[black, very thick] (-1,5) -- (7,1);
        \draw[black, very thick] (-5,-2) -- (1,-5);
        \draw[black, very thick] (7,1) -- (1,-5);

    \end{tikzpicture}
}
\caption{A boundary triangulation of the three-dimensional canonical polytope with ID \href{http://grdb.co.uk/search/toricf3c?ID_cmp=in&ID=547307}{547307} in the Graded Ring Database~\cite{GRDB-toric3}.}
\label{fig:singularity_content} 
\end{figure}
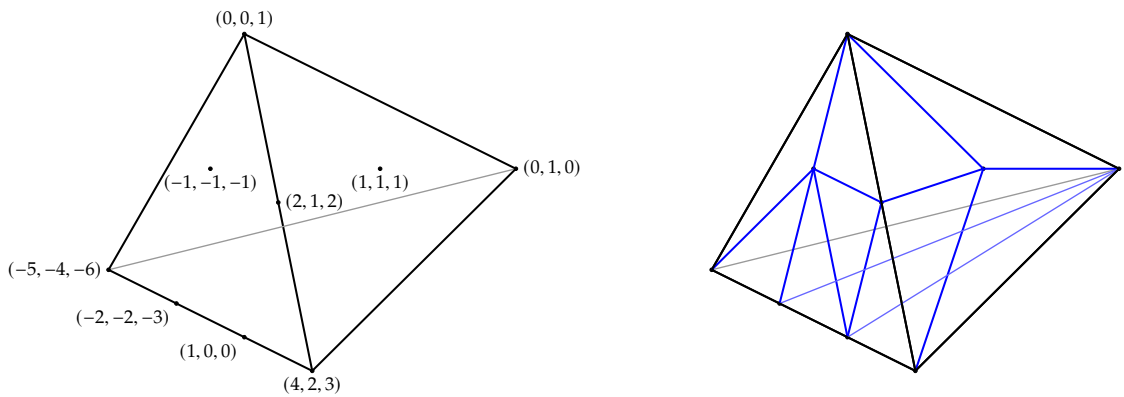

\begin{figure}[ht]
	\centering
    \tdplotsetmaincoords{120}{40}
	\begin{tikzpicture}[line join=bevel,tdplot_main_coords]
        \coordinate (v1) at ( 2,  1,  1);
        \coordinate (v2) at ( 2, -1,  1);
        \coordinate (v3) at ( 1,  1,  1);
        \coordinate (v4) at ( 1, -1,  1);
        \coordinate (v5) at ( 0,  0,  1);
        \coordinate (v6) at (-1,  1, -3);
        \coordinate (v7) at (-1, -1, -3);
        \coordinate (v8) at (-2,  1, -3);
        \coordinate (v9) at (-2, -1, -3);

        \filldraw[color=MidnightBlue!10, ultra thick, fill=MidnightBlue!50, ultra thick] (v6) -- (v7) -- (v9) -- (v8) -- cycle;
        \filldraw[color=MidnightBlue!65, fill=MidnightBlue!65, ultra thick] (1,0,1) -- (v5) -- cycle;
        \filldraw[color=MidnightBlue!10, ultra thick, fill=MidnightBlue!50, ultra thick] (v1) -- (v3) -- (v4) -- (v2) -- cycle;
        
        \filldraw[black] (v1) circle (1.5pt) node[anchor=west] {\tiny $( 2,  1,  1)$};
        \filldraw[black] (v2) circle (1.5pt) node[anchor=south] {\tiny $( 2, -1,  1)$};
        \filldraw[black] (v3) circle (1.5pt) node {};
        \filldraw[black] (v4) circle (1.5pt) node[anchor=east] {\tiny $( 1, -1,  1)$};
        \filldraw[black] (v5) circle (1.5pt) node {};
        \filldraw[black] (v6) circle (1.5pt) node[anchor=west] {\tiny $(-1,  1, -3)$};
        \filldraw[black] (v7) circle (1.5pt) node {};
        \filldraw[black] (v8) circle (1.5pt) node[anchor=north] {\tiny $(-2,  1, -3)$};
        \filldraw[black] (v9) circle (1.5pt) node[anchor=east] {\tiny $(-2, -1, -3)$};

        \filldraw[gray!80] (2, 0, 1) circle (1.5pt) node {};
        \filldraw[gray!80] (1, 0, 1) circle (1.5pt) node {};
        \filldraw[gray!80] (-1, 0, -3) circle (1.5pt) node {};
        \filldraw[gray!80] (-2, 0, -3) circle (1.5pt) node {};
        \filldraw[gray!40] (1, 1, 0) circle (1.5pt) node {};
        \filldraw[gray!40] (0, -1, -1) circle (1.5pt) node {};
        \filldraw[gray!40] (0, 1, -1) circle (1.5pt) node {};
        \filldraw[gray!40] (-1, 0, -2) circle (1.5pt) node {};
        \filldraw[gray!40] (-1, 0, -1) circle (1.5pt) node {};
        \filldraw[gray!40] (0, 0, -1) circle (1.5pt) node {};
        \filldraw[gray!40] (1, 0, 0) circle (1.5pt) node {};
        \filldraw[gray!40] (0, 0, 0) circle (1.5pt) node {};
        \filldraw[gray!40] (1, -1, 0) circle (1.5pt) node {};
        \filldraw[gray!40] (-1, 1, -2) circle (1.5pt) node {};
        \filldraw[gray!40] (-1, -1, -2) circle (1.5pt) node {};

        \draw[black, thick] (v3) -- (v1) -- (v6) -- (v8) -- cycle;
        \draw[black, thin] (v2) -- (v1) -- (v6) -- (v7) -- cycle;
        \draw[black, thick] (v1) -- (v3) -- (v5) -- (v4) -- (v2) -- cycle;
        \draw[black, thin] (v4) -- (v2) -- (v7) -- (v9) -- cycle;
        \draw[black, thin] (v6) -- (v7) -- (v9) -- (v8) -- cycle;
        \draw[black, thick] (v3) -- (v5) -- (v8) -- cycle;
        \draw[black, thick] (v5) -- (v8) -- (v9) -- cycle;
        \draw[black, thick] (v4) -- (v5) -- (v9) -- cycle;

    \end{tikzpicture}
    \caption{A scaffolding of a Laurent polynomial that is mutation-equivalent to~$f_2$.}
    \label{fig:future-scaffolding}
\end{figure}
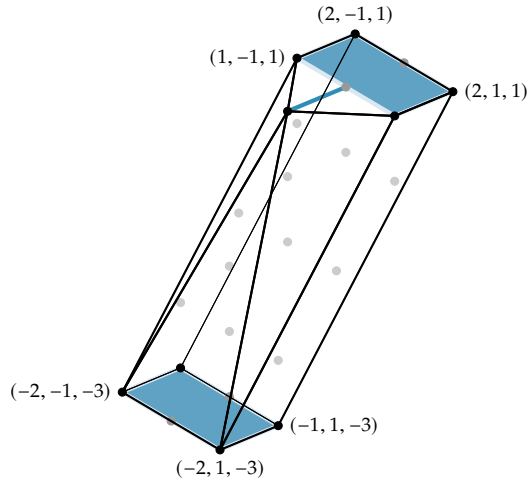

Consider in addition a triangulation of the boundary of~$P$ as shown in Figure~\ref{fig:singularity_content}, and form a fan by taking cones over each triangle in the triangulation. The left-hand front facet gives rise to six smooth cones, and the right-hand front facet gives to four smooth cones. The back facet gives a singularity of type~$\frac{1}{5}(1,1,4)$, and the bottom facet gives three singularities of type~$\frac{1}{3}(1,1,2)$; these singularities are terminal and qG\nobreakdash-rigid. This suggests a basket
\[
    \cB = \Big\{ \textstyle 3 \times \frac{1}{3}(1,1,2), \frac{1}{5}(1,1,4) \Big\}
\]
which agrees with the basket calculated from the Ehrhart series of~$P^*$, that is, with the basket of the possible \QQFano{} Hilbert series with ID~\href{http://grdb.co.uk/search/fano3?id_cmp=in&id=29915}{29915} in the Graded Ring Database\footnote{Recall that we expect that, if a Fano variety~$X$ corresponds under mirror symmetry to a Laurent polynomial~$f$, then there is a qG\nobreakdash-degeneration from~$X$ to~$X_f$. This implies that the Hilbert series of~$X$ coincides with the Hilbert series of~$X_f$, and hence with the Ehrhart series of the dual polytope~$P^*$ where~$P = \Newt(f)$. Thus the Hilbert series of~$X$ is determined by the Newton polytope~$P$ of~$f$. Furthermore the singularities of a \QQFano{} threefold~$X$ are uniquely determined by its Hilbert series, and hence by~$P$. The fact that the singularities of a \QQFano{} threefold are determined by its Hilbert series seems to be a combinatorial accident: it is established by looking case by case through the Graded Ring Database~\cite{BrownKasprzyk2022}, and lacks a geometric proof. We see no reason for the corresponding statement to be true for higher-dimension \QQFano{} varieties. Thus it is likely that, in dimension four and higher, even the basket~$\cB$ of a \QQFano{} qG\nobreakdash-generization~$X$ of~$X_f$ will depend on the rigid MMLP~$f$ that corresponds to~$X$, and not just on the Newton polytope of~$f$.}. In this particular example any triangulation such that the faces are divided into empty triangles will give the same result: the front two faces are both at lattice height~1 above the origin, and so any triangulation of them will give rise to a total of~$10$ smooth cones. There is no choice for the triangulation of the other two faces, at least if we insist on the triangles on the bottom face being empty.

For a second example, consider the Laurent polynomial
\[
    f = \frac{(x^2 + 2x + yz^3 + 3yz^2 + 3yz + y + 1)^2}{xyz} - 12
\]
which is mirror to the smooth Fano $3$\nobreakdash-fold~$V_6$. (It is easy to check that~$f$ is mutation-equivalent to the Laurent polynomial mirror to~$V_6$ given in~\cite[Appendix~A]{CoatesCortiGalkinKasprzyk2016}.) The Newton polytope~$Q$ of~$f$ is pictured in Figure~\ref{fig:V6_mirror}. Since the Fano variety~$V_6$ is smooth, we expect that~$Q$ should have empty basket; indeed the Ehrhart series of~$Q^*$, which is the possible \QQFano{} Hilbert series with ID~\href{http://grdb.co.uk/search/fano3?id_cmp=in&id=24076}{24076} in the Graded Ring Database, has empty basket.
\begin{figure}[ht]
	\centering
    \tdplotsetmaincoords{120}{40}
	\begin{tikzpicture}[line join=bevel,tdplot_main_coords]
        \coordinate (v1) at (3,-1,-1);
        \coordinate (v2) at (-1,1,5);
        \coordinate (v3) at (-1,1,-1);
        \coordinate (v4) at (-1,-1,-1);

        \filldraw[black] (v1) circle (1.5pt) node[anchor=west] {$(3,-1,-1)$};
        \filldraw[black] (v2) circle (1.5pt) node[anchor=south] {$(-1,1,5)$};
        \filldraw[black] (v3) circle (1.5pt) node[anchor=north] {$(-1,1,-1)$};
        \filldraw[black] (v4) circle (1.5pt) node[anchor=east] {$(-1,-1,-1)$};
        
        
        \filldraw[gray!80] (-1, 0, -1) circle (1.5pt) node {};
        \filldraw[gray!80] (-1, 0, 0) circle (1.5pt) node {};
        \filldraw[gray!80] (-1, 0, 1) circle (1.5pt) node {};
        \filldraw[gray!80] (-1, 0, 2) circle (1.5pt) node {};
        \filldraw[gray!80] (-1, 1, 0) circle (1.5pt) node {};
        \filldraw[gray!80] (-1, 1, 1) circle (1.5pt) node {};
        \filldraw[gray!80] (-1, 1, 2) circle (1.5pt) node {};
        \filldraw[gray!80] (-1, 1, 3) circle (1.5pt) node {};
        \filldraw[gray!80] (-1, 1, 4) circle (1.5pt) node {};
        \filldraw[gray!80] (1, 0, -1) circle (1.5pt) node {};
        \filldraw[gray!80] (1, 0, 0) circle (1.5pt) node {};
        \filldraw[gray!80] (1, 0, 1) circle (1.5pt) node {};
        \filldraw[gray!80] (1, 0, 2) circle (1.5pt) node {};
        \filldraw[gray!40] (0, -1, -1) circle (1.5pt) node {};
        \filldraw[gray!40] (0, 0, 2) circle (1.5pt) node {};
        \filldraw[gray!40] (0, 0, 1) circle (1.5pt) node {};
        \filldraw[gray!40] (2, -1, -1) circle (1.5pt) node {};
        \filldraw[gray!40] (0, 0, 0) circle (1.5pt) node {};
        \filldraw[gray!40] (0, 0, -1) circle (1.5pt) node {};
        \filldraw[gray!40] (1, -1, -1) circle (1.5pt) node {};

        \draw[gray!80, thick] (v1) -- (v2) -- (v4) -- (v1);
        \draw[black, very thick] (v1) -- (v2) -- (v3) -- (v1);
        \draw[black, very thick] (v2) -- (v3) -- (v4) -- (v2);

    \end{tikzpicture}
    \caption{The Newton polytope~$Q$ of a Laurent polynomial mirror to~$V_6$.}
    \label{fig:V6_mirror}
\end{figure}
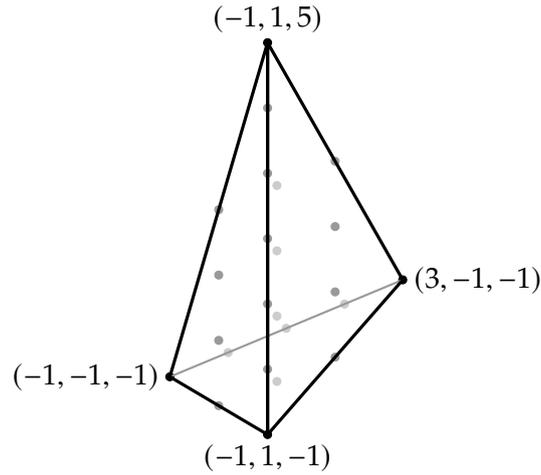
As before, let us consider a triangulation of the boundary of~$Q$, and form a fan by taking cones over each triangle in the triangulation. Each face of~$Q$ other than the back face has lattice height~$1$ above the origin, and the back face has lattice~height~$2$. So let us insist that our triangulation contains only empty triangles on the front faces and bottom face; these empty triangles will give rise to smooth cones in the fan. 
\begin{figure}[ht]
	\centering
	\begin{tikzpicture}[line join=bevel, scale=0.7]
        \coordinate (v1) at (-2,0);
        \coordinate (v2) at (0,4);
        \coordinate (v3) at (2,0);

        \filldraw[black] (v1) circle (1.5pt) node[anchor=north east] {$(-1,-1,-1)$};
        \filldraw[black] (v2) circle (1.5pt) node[anchor=south] {$(-1,1,5)$};
        \filldraw[black] (v3) circle (1.5pt) node[anchor=north west] {$(3,-1,-1)$};

        \coordinate (p1) at (-1,2);
        \coordinate (p2) at (0,2);
        \coordinate (p3) at (1,2);
        \coordinate (q1) at (-1,0);
        \coordinate (q2) at (0,0);
        \coordinate (q3) at (1,0);

        \filldraw[gray!80] (p1) circle (1.5pt) node {};
        \filldraw[gray!80] (p2) circle (1.5pt) node {};
        \filldraw[gray!80] (p3) circle (1.5pt) node {};
        \filldraw[gray!80] (q1) circle (1.5pt) node {};
        \filldraw[gray!80] (q2) circle (1.5pt) node {};
        \filldraw[gray!80] (q3) circle (1.5pt) node {};

        \draw[black, very thick] (v1) -- (v2) -- (v3) -- (v1);
    \end{tikzpicture}
    \begin{tikzpicture}[line join=bevel, scale=0.7]
        \coordinate (v1) at (-2,0);
        \coordinate (v2) at (0,4);
        \coordinate (v3) at (2,0);

        \filldraw[black] (v1) circle (1.5pt) node[anchor=north east] {\phantom{$(-1,-1,-1)$}};
        \filldraw[black] (v2) circle (1.5pt) node[anchor=south] {\phantom{$(-1,1,5)$}};
        \filldraw[black] (v3) circle (1.5pt) node[anchor=north west] {\phantom{$(3,-1,-1)$}};

        \coordinate (p1) at (-1,2);
        \coordinate (p2) at (0,2);
        \coordinate (p3) at (1,2);
        \coordinate (q1) at (-1,0);
        \coordinate (q2) at (0,0);
        \coordinate (q3) at (1,0);

        \filldraw[gray!80] (p1) circle (1.5pt) node {};
        \filldraw[gray!80] (p2) circle (1.5pt) node {};
        \filldraw[gray!80] (p3) circle (1.5pt) node {};
        \filldraw[gray!80] (q1) circle (1.5pt) node {};
        \filldraw[gray!80] (q2) circle (1.5pt) node {};
        \filldraw[gray!80] (q3) circle (1.5pt) node {};

        \draw[blue, thick] (v2) -- (q2) -- (v1) -- cycle;
        \draw[blue, thick] (v2) -- (q2) -- (v3) -- cycle;

        \draw[black, very thick] (v1) -- (v2) -- (v3) -- (v1);
    \end{tikzpicture}
    \caption{A triangulation of the back facet of~$Q$.}
    \label{fig:V6_mirror_facet}
\end{figure}
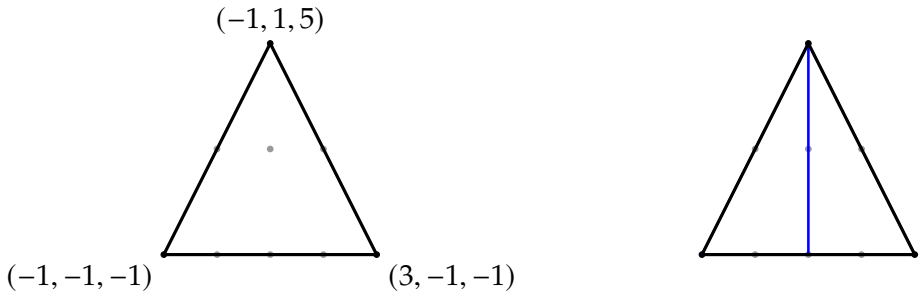
For the back face, let us take the triangulation shown in Figure~\ref{fig:V6_mirror_facet}. Note that the height of the facet is~$2$, and that both triangles shown are~$2$\nobreakdash-fold dilations of a standard two-dimensional lattice simplex; note also that all vertices in the triangulation are primitive. This suggests, by analogy with the construction of singularity content in two dimensions, that the singularities that correspond to these cones should be qG\nobreakdash-smoothable -- that is, we expect these cones to give rise to three-dimensional~$T$-singularities. This would give empty basket for~$Q$, in agreement with the discussion above.

One challenge in passing from this discussion to a satisfactory definition of singularity content in three dimensions is that in general there are many different triangulations of the boundary of a three-dimensional polytope~$Q = \Newt{f}$, and it is not clear (at least to us) which such triangulations should be admissible. We expect that the admissible triangulations will be determined by set~$S_f$ of mutations that~$f$ permits; this was defined just before Proposition~\ref{pro:rigid}. In particular the admissible triangulations will depend on~$f$ and not just on the underlying polytope~$Q$. One can see hints of this phenomenon in the work of Corti--Hacking--Petracci on smoothing Gorenstein toric Fano varieties: see~\cite{CoatesCortiDaSilva2019}. It is likely that singularity content in higher dimensions will closely resemble the Corti--Filip--Petracci notion of zero-mutable Laurent polynomials~\cite{CortiFilipPetracci2020}.
\begin{description}
    \item[Problem E] Give a combinatorial definition of singularity content in all dimensions.
\end{description}
Whatever the definition is, in a fully satisfactory theory there should be a bijection between:
\begin{itemize}
    \item the data defining the singularity content of rigid MMLPs~$f$ with Newton polytope~$P$;
    \item the set of mutation-equivalence classes of such~$f$;
    \item the smoothing components of the qG\nobreakdash-deformation space of~$X_P$.
\end{itemize}
This is the picture that holds in two dimensions~\cite{AkhtarCoatesCortiHeubergerKasprzykOnetoPetracciPrinceTveiten2016,CoatesKasprzykPittonTveiten2021}, and that we expect to generalise.

\subsection*{Acknowledgments}\label{sec:acknowledgements}
The research presented here has been guided in a fundamental way by ideas of and joint work with Alessio Corti. It forms part of a broader program, initiated by Corti and Vasily Golyshev, to discover and understand the classification of Fano varieties using mirror symmetry. We thank Corti, Sergey Galkin, Golyshev, Andrea Petracci, and Thomas Prince for many useful conversations. In particular, Conjecture~\ref{conj:mirror} arose from conversations with Corti and Golyshev at the workshop \emph{Motivic Structures on Quantum Cohomology and Pencils of CY Motives} at the Max Planck Institute for Mathematics, Bonn in September 2014.

The computations that underlie this work were performed using the Imperial College High Performance Computing Service and the compute cluster at the Department of Mathematics, Imperial College London. We thank Andy Thomas and Matt Harvey for invaluable technical assistance. TC~is supported by ERC Consolidator Grant~682603 and EPSRC Programme Grant~EP/N03189X/1. LH~is supported by Leverhulme grant RPG-2021-149 and Projet \'Etoiles montantes R\'egion Pays de la Loire. AK~is supported by EPSRC Fellowship~EP/N022513/1.
\bibliographystyle{plain}

\end{document}